\documentclass[10pt]{article}

\usepackage{microtype}
\usepackage{graphicx}
\usepackage{subcaption}
\usepackage{booktabs}

\pdfoutput=1

\usepackage{amsmath,amsbsy,amsgen,amscd,amssymb,amsthm,amsfonts,stmaryrd}
\usepackage{mathtools}

\usepackage{bm}
\usepackage{enumerate}

\usepackage{microtype}      

\usepackage[colorlinks,citecolor=blue]{hyperref}
\usepackage{cleveref}
\usepackage{url}            

\usepackage[usenames,dvipsnames,svgnames]{xcolor}
\usepackage{graphicx}

\graphicspath{{art/}}

\definecolor{dark-gray}{gray}{0.3}
\definecolor{dkgray}{rgb}{.4,.4,.4}
\definecolor{dkblue}{rgb}{0,0,.5}
\definecolor{medblue}{rgb}{0,0,.75}
\definecolor{rust}{rgb}{0.7,0.1,0.1}
\definecolor{drust}{rgb}{0.5,0.1,0.1}

\hypersetup{urlcolor=drust}
\hypersetup{citecolor=blue}
\hypersetup{linkcolor=rust}

\newtheorem{theorem}{Theorem}
\newtheorem{lemma}[theorem]{Lemma}
\newtheorem{corollary}[theorem]{Corollary}
\newtheorem{fact}[theorem]{Fact}

\numberwithin{equation}{section}
\numberwithin{theorem}{section}

\renewcommand{\phi}{\varphi}

\newcommand{\proj}{\operatorname{proj}}

\newcommand{\cG}{\mathcal{G}}
\newcommand{\bx}{\mathbf{x}}
\newcommand{\by}{\mathbf{y}}
\newcommand{\bz}{\mathbf{z}}
\newcommand{\bu}{\mathbf{u}}
\newcommand{\bv}{\mathbf{v}}

\newcommand{\bb}{\mathbf{b}}
\newcommand{\ba}{\mathbf{a}}
\newcommand{\zinit}{\mathring{\mathbf{z}}}
\newcommand{\Gap}{\mathrm{Gap}}
\newcommand{\Ghat}{\widehat{G}}
\newcommand{\Lhat}{\widehat{L}}

\DeclareFontFamily{OT1}{pzc}{}
\DeclareFontShape{OT1}{pzc}{m}{it}{<-> s * [1.200] pzcmi7t}{}
\DeclareMathAlphabet{\mathpzc}{OT1}{pzc}{m}{it}

\DeclareMathOperator{\prox}{prox}

\usepackage{xcolor,colortbl}

\usepackage[most]{tcolorbox}

\newtheorem{assumption}{Assumption}

\tcolorboxenvironment{assumption}{
  enhanced,
  breakable,
  colback=gray!10,
  colframe=gray!50,
  boxrule=0.5pt,
  arc=2mm,
  left=8pt,
  right=8pt,
  top=4pt,
  bottom=4pt,
}

\makeatletter
\newtheorem*{rep@theorem}{\rep@title}
\newcommand{\newreptheorem}[2]{%
	\newenvironment{rep#1}[1]{%
		\def\rep@title{#2 \ref{##1}}%
		\begin{rep@theorem}}%
		{\end{rep@theorem}}}
\makeatother

\newreptheorem{theorem}{Theorem}
\newreptheorem{lemma}{Lemma}

\theoremstyle{definition}
\newtheorem{remark}[theorem]{Remark}

\usepackage{lscape}
\usepackage{makecell}

\usepackage{natbib}
\usepackage{algorithm}
\usepackage{algorithmic}
\usepackage[margin=1in]{geometry}

\crefname{theorem}{Theorem}{Theorems}
\Crefname{theorem}{Theorem}{Theorems}
\crefname{proposition}{Proposition}{Propositions}
\Crefname{proposition}{Proposition}{Propositions}
\crefname{lemma}{Lemma}{Lemmas}
\Crefname{lemma}{Lemma}{Lemmas}
\crefname{corollary}{Corollary}{Corollaries}
\Crefname{corollary}{Corollary}{Corollaries}
\crefname{definition}{Definition}{Definitions}
\Crefname{definition}{Definition}{Definitions}
\crefname{assumption}{Assumption}{Assumptions}
\Crefname{assumption}{Assumption}{Assumptions}
\crefname{remark}{Remark}{Remarks}
\Crefname{remark}{Remark}{Remarks}

\crefname{appendix}{Appendix}{Appendices}
\Crefname{appendix}{Appendix}{Appendices}
\crefname{appsubsection}{Appendix}{Appendices}
\Crefname{appsubsection}{Appendix}{Appendices}

\newcommand{\E}{\mathbb{E}}

\usepackage[colorinlistoftodos,prependcaption]{todonotes}

\title{How to Make the Gradient Mapping Small for Constrained Stochastic Min-Max Problems and Beyond}
\author{Ahmet Alacaoglu\footnote{Department of Mathematics, University of British Columbia, \url{ahmet.alacaoglu@ubc.ca} }}
\date{}
\begin{document}
\maketitle
\begin{abstract}
We study the stochastic first-order oracle complexity for constrained or regularized convex-concave min-max optimization and stochastic monotone variational inequalities. We focus on the case when suboptimality is measured in terms of the gradient mapping, also known as, forward-backward or natural residual, an optimality notion that generalizes the gradient norm for unconstrained problems. 
In this setting,  under standard unbiased oracle access with now-standard variance assumptions, the best-known complexity for making the norm of the gradient mapping less than $\varepsilon$ is $\widetilde{O}(\varepsilon^{-4})$, compared to the near-optimal $\widetilde{O}(\varepsilon^{-2})$ that is established in the unconstrained case. We bridge this gap to improve the gradient mapping complexity for constrained convex-concave min-max problems to $\widetilde{O}(\varepsilon^{-2})$. We then extend to prove the same complexity for problems without the bounded variance, by using the Blum-Gladyshev assumption.
\end{abstract}
\section{Introduction}
We study algorithms to solve the standard template of constrained convex-concave min-max optimization, 
\begin{equation}\label{eq: min-max}
\min_{\bu \in U} \max_{\bv \in V} f(\bu, \bv),
\end{equation}
where $f(\cdot, \bv)$ is convex, $f(\bu, \cdot)$ is concave, the gradients $\nabla_{\bu} f(\bu, \bv), \nabla_{\bv} f(\bu, \bv)$ are Lipschitz continuous, and $U\subseteq \mathbb{R}^n, V\subseteq \mathbb{R}^m$ are closed convex sets admitting efficient projection operators.

Two canonical examples of this template come from cases where the coupling between $\bu$ and $\bv$ is bilinear, such as matrix games
\begin{equation}\label{eq: mat_games}
\min_{\bu\in\Delta_n} \max_{\bv\in\Delta_m} \langle A\bu, \bv\rangle,
\end{equation}
where $\Delta_n$ denotes the probability simplex, that is, $\Delta_n=\{ \bu\in\mathbb{R}^n_+\colon \sum_{i=1}^n u_i = 1\}$.

The second common example is linearly constrained optimization
\begin{equation*}
\min_{\bu\in \mathbb{R}^n} f(\bu): A\bu\leq \bb,
\end{equation*}
for a proper, convex and closed function $f\colon\mathbb{R}^{n}\to(-\infty, \infty]$. Solving this problem efficiently is generally done by utilizing convex duality to convert to a min-max problem given as
\begin{equation}\label{eq: lin_const}
\min_{\bu \in \mathbb{R}^n} \max_{\bv\geq 0} f(\bu) + \langle A\bu-\bb, \bv \rangle.
\end{equation}
Strictly speaking, this problem does not fit the template \eqref{eq: min-max} due to potential nonsmoothness in $f$. However, our focus on \eqref{eq: min-max} in this section is purely for a simple presentation. In the sequel, we develop our results for the more general problem of \emph{variational inequalities} to cover this example, see \eqref{eq: vi} for a precise problem statement.

We will focus on a setting where we do not have access to gradients $\nabla_\bu f(\bu, \bv)$ and $\nabla_{\bv}f(\bu, \bv)$, but to their unbiased estimators such that
\begin{equation*}
\mathbb{E}[\nabla_{\bu}f_\xi(\bu, \bv)] =\nabla_{\bu} f(\bu, \bv),
\end{equation*}
where $\xi$ is sampled from an unknown distribution $P$, where we can have $f(\bu, \bv) = \mathbb{E}_{\xi\sim P}[f_{\xi}(\bu, \bv)]$. At every iteration of our algorithm, we will assume that we receive i.i.d. samples $\xi \sim P$.

More concretely, for \eqref{eq: mat_games} or \eqref{eq: lin_const}, these stochastic oracles may correspond to randomly sampling rows or columns from the matrix $A$, whereas the full-gradients require accessing the matrix $A$ at every iteration. A concrete example is given at the end of Section \ref{sec: prelim}.
\paragraph{Two optimality notions. }
Arguably, the most widely used optimality notion for min-max problems is the primal-dual gap
\begin{equation*}
\Gap(\bar \bu, \bar \bv) = \max_{\bu \in U, \bv \in V} f(\bar \bu, \bv) - f(\bu, \bar\bv).
\end{equation*}
This is well-suited to problems such as matrix games in \eqref{eq: mat_games}, however, when the sets $U, V$ are unbounded, such as the case of linearly constrained optimization in \eqref{eq: lin_const}, this notion runs into problems. In particular, this quantity may be infinite at test points $\bar\bu, \bar\bv$ that are not optimal. 

This well-known drawback gave rise to the restricted primal-dual gap function \citep{nesterov2009primal} that takes the maximum over a compact set that needs to satisfy certain requirements. That is, this set needs to contain a solution and the iterates of the algorithm, see \citep[Lemma 4]{nesterov2009primal}. This approach can be compatible in the deterministic case when the boundedness of the iterates can be shown for standard methods. 

For stochastic algorithms, boundedness of the iterates does not generally hold, causing difficulty in identifying a compact set to define the restricted gap function \'a la \cite{nesterov2009primal}. Moreover, the restricted gap is generally taken over sets that generally depend on the solution, making it difficult to compute in general.

Despite its incompatibility with unbounded domains and stochastic problems, this notion is widely used because the standard convergence analyses for algorithms, such as extragradient \citep{korpelevich1976extragradient} or forward-backward-forward (FBF) methods \citep{tseng2000modified}, give a rate result on the gap as a byproduct. As a result, optimal methods for reducing the gap is well-established for both deterministic and stochastic problems \citep{nemirovski2004prox,nemirovski2009robust}.

Perhaps even a more natural way of measuring suboptimality for unbounded problems is the gradient mapping norm \citep[Section 2]{nesterov2013gradient}, which generalizes the gradient norm for unconstrained problems
\begin{equation}\label{eq: grad_mapping_f}
\| \cG_{\eta, f}(\bx, \by)\|^2 = \frac{1}{\eta^2}\left(\| \bx-\proj_U(\bx-\eta \nabla_\bx f(\bx, \by))\|^2 + \|\by-\proj_V(\by+\eta \nabla_\by f(\bx, \by))\|^2\right),
\end{equation}
where $\proj_C$ denotes the Euclidean projection on $C$.
While this notion is widely used for nonconvex problems, optimal complexity results for making this quantity small, even for convex minimization problems is relatively recent, see \cite{nesterov2012make} for deterministic and \cite{allen2018make} for stochastic optimization.

Clearly, this reduces to the gradient norm when $(U, V)=(\mathbb{R}^n, \mathbb{R}^m)$ since 
\begin{equation*}
\cG_{\eta, f} (\bx, \by) = \binom{\nabla_\bx f(\bx, \by)}{-\nabla_{\by}f(\bx, \by)}.
\end{equation*}
Optimal methods for reducing the gradient -- or gradient mapping -- norm for deterministic convex-concave min-max problems is established relatively recently as well, see \cite{diakonikolas2020halpern,kim2021accelerated,yoon2021accelerated,kovalev2022first}.
An optimal method for unconstrained and stochastic min-max problems is proposed in \cite{chen2024near} who showed the complexity $\widetilde{O}(\varepsilon^{-2})$ for making the gradient norm less than $\varepsilon$ with matching lower bounds. 

Yet, the best-known complexity for constrained min-max problems, which are of interest in terms of many applications -- such as matrix games \eqref{eq: mat_games}, linearly constrained optimization \eqref{eq: lin_const}, and distributionally robust optimization \citep[Eq. (1)]{namkoong2016stochastic} -- remained at $\widetilde{O}(\varepsilon^{-4})$ -- with the exception of \cite{trandinh2026unbiasedbiasedvariancereducedforwardreflectedbackward} that improved this to $\widetilde{O}(\varepsilon^{-10/3})$ under stronger assumptions on the stochastic oracle, see Table \ref{tab:constraints1}. The goal of this work is to improve this complexity to one that is optimal in terms of $\varepsilon$ dependence. This discussion is summarized in Table \ref{tab:constraints2} where we highlighted the main setting we focus on. 

\paragraph{Generalized problems. }
For a simpler notation and a slight generalization, we will consider the problem of variational inequalities where the goal is to
\begin{equation}\tag{VI}
\text{find~} \bx^\star \text{~such that~} \langle F(\bx^\star), \bx-\bx^\star \rangle \geq r(\bx^\star) - r(\bx)\text{~for any~} \bx\in\mathbb{R}^d,\label{eq: vi}
\end{equation}
with a proper, convex and closed $r\colon\mathbb{R}^d\to(-\infty, \infty]$ and a \emph{monotone operator}\footnote{Introduced in Section \ref{sec: prelim}.} $F\colon\mathbb{R}^d\to\mathbb{R}^d$. Mapping \eqref{eq: min-max} to this problem is done by setting
\begin{equation}\label{eq: mapping_minmax_vi}
\bx=\binom{\bu}{\bv}, ~~~ F(\bx) = \binom{\nabla_{\bu} f(\bu, \bv)}{-\nabla_{\bv} f(\bu, \bv)}, ~~~ r(\bx) = \delta_{U}(\bu) + \delta_V(\bv),
\end{equation}
where $\delta_C(\bx)$ denotes the indicator function, that is $0$ when $\bx\in C$ and $\infty$ otherwise.

We will often use the equivalent form written as a \emph{monotone inclusion} problem where the goal is to
\begin{equation}\tag{MI}\label{eq: mi}
\text{find~} \bx^\star \text{~such that~} 0 \in H(\bx^\star):=(F+\partial r)(\bx^\star).
\end{equation}
We focus on this specific monotone inclusion for presentation purposes, but by changing \Cref{alg:eg_anchor} to a method based on FBF \citep{tseng2000modified} or forward-reflected-backward method \citep{malitsky2020forward}, our analysis can be extended to solve general monotone inclusion where $\partial r$ is replaced by a maximally monotone operator. We skip this extension for keeping our notation simpler.

By overloading the terminology for simplicity, we shall now write the \emph{gradient mapping} as
\begin{equation*}
\cG_{\eta, H}(\bx) = \frac{1}{\eta} \left( \bx-\prox_{\eta r}(\bx-\eta F(\bx)) \right),
\end{equation*}
which is equivalent to \eqref{eq: grad_mapping_f} with the setting \eqref{eq: mapping_minmax_vi}. We emphasize that $F$ does not need to be the gradient of a function, and we merely use this notation to keep the link with min-max optimization \eqref{eq: min-max} transparent. This notion is also referred to as forward-backward residual or natural residual (in the specific case when $r$ is an indicator function), see \citep[Section 10.3]{facchinei2003finite}. 

Our goal in the sequel is to find an approximate solution to \eqref{eq: mi} by making the residual small. This result directly covers making the gradient mapping small for convex-concave and constrained min-max problems by the mapping \eqref{eq: mapping_minmax_vi}.

\paragraph{Complexity. } Given the problem \eqref{eq: vi} introduced above, we measure complexity in terms of the number of calls to the \emph{unbiased stochastic oracles} $F_\xi$ such that
\begin{equation*}
\mathbb{E}[F_\xi(\bx)] = F(\bx)
\end{equation*}
and to the proximal operator of $r$ ---which reduces to the number of calls to stochastic gradients and projections on $U, V$ in the case of min-max problems \eqref{eq: min-max}--- to get a certain optimality notion less than $\varepsilon$.

There exists now a large body of works that analyze methods such as stochastic extragradient with optimal complexity for reducing the gap. That is, it is well-known how to get
\begin{equation*}
\mathbb{E}[\Gap(\bar\bx, \bar\by)] \leq \varepsilon, \text{~with complexity~} O(\varepsilon^{-2}).
\end{equation*}
We refer to \cite{nemirovski2009robust,juditsky2011solving,kotsalis2022simple,zhao2022accelerated,lan2026novel} for representative examples.

Moreover, when $r\equiv 0$, that is, we have a root finding problem, \cite{chen2024near} established the oracle complexity
\begin{equation*}
\widetilde{O}(\varepsilon^{-2}) \text{~for making~}\mathbb{E}\|F(\bx)\| \leq \varepsilon,
\end{equation*}
which is optimal. The authors assumed that the unbiased oracle has bounded variance upper bounded by $\sigma^2$ and designed an algorithm that sets its parameters by using $\sigma^2$ and $\|\bx_0-\bx^\star\|^2$, the initial distance of their initial point to a solution. For unconstrained problems, with stronger assumptions such as sharpness, \cite{cai2022stochastic} also obtained a similar complexity, yet for unconstrained and merely monotone problems, the complexity in this work deteriorated to $\widetilde{O}(\varepsilon^{-3})$.

For constrained problems, the best-known complexity for the gradient mapping under our assumptions is
\begin{equation*}
O(\varepsilon^{-4}) \text{~for making~}\mathbb{E}\|\cG_{\eta, H}(\bx)\| \leq \varepsilon,
\end{equation*}
which is suboptimal. This complexity remained as the best-known on a large body of literature for constrained or regularized min-max problems \citep{iusem2017extragradient,boct2021minibatch,alacaoglu2025towards,lee2021fast,pethick2023solving,bohm2023solving,diakonikolas2021efficient,tran2025vfog}. Even though some of these works relied on some relaxations of convex-concavity, none of the analyses in those works gave rise to a better complexity for convex-concave problems. The aim of this work is to improve this complexity for problems satisfying the Blum-Gladyshev variance and without requiring parameters depending on initial distance to solution or variance upper bounds.

A recent work by \cite{trandinh2026unbiasedbiasedvariancereducedforwardreflectedbackward} obtained a complexity of $\widetilde{O}(\varepsilon^{-10/3})$ for constrained problems by using variance reduction. Different from our work, they required a multi-point oracle that requires querying the stochastic oracle at the same seed for different points, the mean-square Lipschitzness assumption (which is stronger than mere Lipschitzness of $F$) and bounded variance.

\begin{table*}[t!]
\centering
\begin{tabular}{l l l l l }
\toprule
&\textbf{Constraints} & \makecell{\textbf{Need for setting} \\ \textbf{ alg. parameters}} & \makecell{\textbf{Stoc. oracle} \\ \textbf{complexity}} & \textbf{Variance assumption} \\
\midrule
\cite{chen2024near} & $\times$ & $\sigma^2, \| \bx_0-\bx^\star\|^2, L_F, G$ & $\widetilde{O}(\varepsilon^{-2})$ & Bounded \\[3mm]
\cite{trandinh2026unbiasedbiasedvariancereducedforwardreflectedbackward} & $\checkmark$ & $L_F$ & $\widetilde{O}(\varepsilon^{-10/3})$ & \makecell[l]{Bounded variance \\ Multi-point oracle \\ mean-squared Lipschitz} \\[3mm]
\makecell[l]{\cite{iusem2017extragradient}\\and other works} & $\checkmark$ & $L_F, B$  & $\widetilde{O}(\varepsilon^{-4})$ & Blum-Gladyshev \\[3mm]
This work & $\checkmark$ & $L_F, B$ & $\widetilde{O}(\varepsilon^{-2})$ & Blum-Gladyshev\\
\bottomrule
\end{tabular}
\caption{Complexity results for stochastic convex-concave optimization under Assumption \ref{asp: 1} and variance assumption specified in the last column. For the first row, the complexity is for $\mathbb{E}\| F(\bx)\|\leq\varepsilon$. For the last three rows, the complexity is for $\mathbb{E}\| \cG_{\eta, H}(\bx)\|\leq\varepsilon$. See the notation in \eqref{eq: mapping_minmax_vi} and \eqref{eq: mi}.}
\label{tab:constraints1}
\end{table*}

\begin{table*}[t!]
\centering
\begin{tabular}{l|ll}
\toprule
\multicolumn{1}{c}{} 
& \multicolumn{1}{c}{\textbf{Constrained}} 
& \multicolumn{1}{c}{\textbf{Unconstrained}} \\
\midrule
\textbf{Gap} 
& \makecell{$O(\varepsilon^{-2})$\\\cite{juditsky2011solving}}
& \makecell{$O(\varepsilon^{-2})$ \\\cite{juditsky2011solving}} \\[5mm]

\textbf{Gradient mapping} 
& \fbox{\makecell{\color{gray} $O(\varepsilon^{-4})$\\\color{gray} \cite{kotsalis2022simple}}} 
& \makecell{$\widetilde{O}(\varepsilon^{-2})$ \\\cite{chen2024near}} \\[3mm]
\bottomrule
\end{tabular}
\caption{\small{Best-known complexity results for stochastic convex-concave optimization under Assumption \ref{asp: 1} and bounded variance. For the first row, the complexity is for making the (restricted) gap less than $\varepsilon$. For the second row, it is for making the gradient mapping (which reduces to gradient norm for unconstrained problem) less than $\varepsilon$.}}
\label{tab:constraints2}
\end{table*}

\subsection{Notation and Preliminaries}\label{sec: prelim}
Given the history of random vectors $\xi_0, \xi_{0.5}, \dots, \xi_k$, we denote the expectation conditioned on the generated $\sigma$-algebra as $\mathbb{E}_k[\cdot] = \mathbb{E}[\cdot~|~\sigma(\xi_0, \xi_{0.5}, \dots, \xi_k)]$. For a vector $\bx$, the notation $x_i$ refers to its $i$-th coordinate.

In this paper, we always consider a regularizer $r\colon\mathbb{R}^d \to (-\infty, \infty]$ that is convex, proper and closed. Its subdifferential set is denoted as $\partial r$. We define the proximal operator of $r$ as 
\begin{equation}\label{eq:prox}
\prox_{r}(\bx) =\arg\min_\by r(\by) + \frac{1}{2} \| \bx-\by\|^2,
\end{equation}
from which we can derive the \emph{prox-inequality}, by using the optimality condition of \eqref{eq:prox},
\begin{equation}\label{eq: prox_ineq}
\by=\prox_{r}(\bx) \iff \langle \by-\bx, \bu-\by \rangle \geq r(\by) - r(\bu)~~~\forall \bu\in\mathbb{R}^d.
\end{equation}
When $r(\bx) = \delta_{C}(\bx) = \begin{cases} 0, \text{~if~} \bx\in C,\\ \infty, \text{~if~} \bx\not\in C, \end{cases}$ this reduces to the projection operator
\begin{equation*}
\proj_{C}(\bx) =\arg\min_{\by\in C} \frac{1}{2} \| \bx-\by\|^2.
\end{equation*}
We say that an operator $F\colon\mathbb{R}^d \to\mathbb{R}^d$ is $\mu$-strongly monotone, when
\begin{equation*}
\langle F(\bx) - F(\by), \bx-\by \rangle \geq \mu\|\bx-\by\|^2,
\end{equation*}
and monotone when this inequality holds with $\mu=0$.

The operator $F$ is $L_F$-Lipschitz when
\begin{equation*}
\| F(\bx)-F(\by)\|\leq L_F\|\bx-\by\|.
\end{equation*}
Recalling our mapping $F=\binom{\nabla_\bu f(\bu, \bv)}{-\nabla_{\bv} f(\bu, \bv)}$ for a convex-concave $f$, this corresponds to Lipschitzness of the gradients of $\nabla_\bu f(\bu, \bv), \nabla_\bv f(\bu, \bv)$.

\begin{assumption}\label{asp: 1}
For problem \eqref{eq: mi}, let $F$ be monotone and $L_F$-Lipschitz. Assume that $r$ is proper, convex, closed. There exists $\bx^\star$ such that $0\in(F+\partial r)(\bx^\star)$. We have access to an unbiased estimator of $F$ such that $\mathbb{E}[F_\xi(\bx)] = F(\bx)$.
\end{assumption}
In view of the mapping \eqref{eq: mapping_minmax_vi}, \Cref{asp: 1} is satisfied when $f(\bu, \bv)$ is convex-concave (implies that $F$ is monotone), the gradients $\nabla_\bu f(\bu, \bv)$ and $\nabla_\bv f(\bu, \bv)$ are Lipschitz continuous (implies that $F$ is Lipschitz) and $U, V$ are convex closed sets (implies the requirements on $r$).

We now present the Blum-Gladyshev (BG) assumption from \cite{gladyshev1965stochastic,blum1954approximation} that is studied for minimization and  min-max problems recently.
\begin{assumption}[Blum-Gladyshev variance]\label{asp: 2}
Let the unbiased estimator of $F$ satisfy
\begin{equation*}
\mathbb{E} \| F_\xi(\bx) - F(\bx) \|^2 \leq B^2 \| \bx-\zinit\|^2 + G^2,
\end{equation*}
where $\zinit$ is a fixed initial point. In view of \Cref{asp: 1}, denote $\Lhat_F = L_F + B$.
\end{assumption}
The reason for selecting $\zinit$ as the center in the assumption is for convenience. One can change it to any other fixed center point, by absorbing the difference in the constant $G$ and slightly changing $B$.

 We also define the initial distance to a solution as (see \eqref{eq: mi})
\begin{equation}\label{eq:dstar}
D_{\star} = \| \zinit-\bx^\star\|.
\end{equation}

Let us first remark that this is a generalization of the commonly used bounded variance assumption that is used in \cite{allen2018make,chen2024near}. If $B=0$ in \Cref{asp: 2}, this would correspond to assuming a bounded variance.

In the sequel, we require the knowledge of $B$, but not $G$ for setting algorithmic parameters. As we explain now, $B$ is often connected to the Lipschitz constant and hence its knowledge should not be too restrictive in many cases.

Let us observe that a sufficient condition for the BG assumption is when we have $F(\bx) = \mathbb{E}[F_{\xi\sim P}(\bx)]$ and $F_\xi(\bx)$ is mean-square Lipschitz, that is, 
\begin{equation*}
\mathbb{E}\| F_\xi(\bx) - F_\xi(\by) \|^2 \leq L_{\exp}^2\|\bx-\by\|^2,
\end{equation*}
and $\mathbb{E} \| F_\xi(\bx^\star) - F(\bx^\star)\|^2\leq\sigma_\star^2$. This is sufficient because of the chain of inequalities
\begin{align*}
\mathbb{E} \| F_\xi(\bx) - F(\bx) \|^2 &\leq 2\mathbb{E} \| F_\xi(\bx) - F(\bx) - F_\xi(\bx^\star) + F(\bx^\star) \|^2 + 2\mathbb{E} \| F_\xi(\bx^\star) - F(\bx^\star)\|^2  \\
&\leq 2\mathbb{E} \|F_\xi(\bx) - F_\xi(\bx^\star) \|^2 + 2\mathbb{E}\| F_\xi(\bx^\star)-F(\bx^\star)\|^2\\
&\leq 2L_{\exp}^2\| \bx-\bx^\star\|^2+2\sigma_\star^2\\
&\leq \underbrace{4L_{\exp}^2}_{B^2} \| \bx-\zinit \|^2+\underbrace{4L_{\exp}^2 \| \zinit-\bx^\star\|^2 + 2\sigma_\star^2}_{G^2},
\end{align*}
where the first and last steps are by Young's inequality, the second step is using $\mathbb{E}[F_\xi(\bx) - F_\xi(\bx^\star)] = F(\bx) - F(\bx^\star)$ -- along with the inequality $\mathbb{E}\|X-\mathbb{E}X\|^2 \leq \mathbb{E}\|X\|^2$ for random vectors $X$. This proves that, for example, \citep[Assumptions 2, 4]{mishchenko2020revisiting} implies \Cref{asp: 2}.

A more concrete example is the bilinearly coupled min-max problem where a common oracle is by sampling row-column pairs from matrix $A\in\mathbb{R}^{m\times n}$:
\begin{equation*}
F_{\xi}(\bx) = \binom{mA_{j:} v_j }{-nA_{:i} u_i},
\end{equation*}
where $\xi=(i, j)$ are selected uniformly at random, $A_{:i}$ is $i$-th column of $A$ and $A_{j:}$ is $j$-th row of $A$.

Then, the variance $\mathbb{E}_{i, j} \left\| \binom{mA_{j:} v_j }{-nA_{:i} u_i}- \binom{A^\top v}{-A\bu} \right\|^2$ will not be bounded in general, unless $\bu, \bv$ live on compact sets -- an unrealistic assumption in general, preventing applying the results of the form \cite{chen2024near} even for unconstrained problems. Yet, the variance will scale quadratically in the norm of $\bx=\binom{\bu}{\bv}$, satisfying the BG assumption. This example can be generalized to solve problems with other affine operators and stochastic oracles.

Byproducts of our analysis also include optimal complexity guarantees for the gradient mapping norm for strongly convex-strongly concave problems. We also design algorithms not requiring difficult-to-compute constants such as the initial distance to solution, that is, $\|\zinit-\bx^\star\|^2$, required in \citep{chen2024near}, or a global variance upper bound, required in the work \citep{chen2024near}. 

\section{Algorithm \& Statement of Main Results}
\begin{algorithm*}[t]
\caption{Anchored stochastic extragradient -- $\mathtt{ASEG}(A+\partial r, \bx_0, \beta, \alpha, K)$}
\begin{algorithmic}
    \STATE Initial iterate $\bx_0$, global initial point $\zinit$, anchoring parameter $\beta$, step size $\alpha>0$, unbiased evaluations of $A$ denoted as $A_\xi$ \\
    \STATE Denote $\bx_{i} \equiv \bx_{i}^{s, n}$ for $i=0, \dots, K$
    \vspace{.2cm}
    \FOR{$k = 0, 1, 2,\ldots, K-1 $}
	\STATE $\bar\bx_k = \beta \zinit + (1-\beta) \bx_k$
        \STATE $\bx_{k+1/2} = \prox_{\alpha r}(\bar\bx_k - \alpha A_{\xi_k}(\bx_k))$
        \STATE $\bx_{k+1} = \prox_{\alpha r}(\bar\bx_k - \alpha A_{\xi_{k+1/2}}(\bx_{k+1/2}))$
        \ENDFOR
        \STATE \textbf{Output:} $\by_{n}^s := \hat\bx_K^{s, n} = \frac{1}{K}\sum_{k=0}^{K-1}\bx_{k+1/2}$.
      \end{algorithmic}
\label{alg:eg_anchor}
\end{algorithm*}

\begin{algorithm*}[t]
\caption{$\mathtt{SEG^{SC}}(A+\partial r, \by_0, \mu_A, \Lhat_A, B_A, T)$}
\begin{algorithmic}
    \STATE Initial iterate $\by_0$, oracle budget $T$, first period $N = \lfloor \frac{T}{8\Lhat_A/\mu_A} \rfloor$, second period $M = \lfloor \log_2\frac{T}{16\Lhat_A/\mu_A} \rfloor$ \\
    \STATE Denote $\by_{i} := \by_i^{s}$ for $i=0, 1,\dots, N+M$
    \vspace{.2cm}
    \FOR{$n = 1, 2,\ldots, N + M$}
    \IF{$n\leq N$}
        \STATE $\by_{n} = \mathtt{ASEG}(A+\partial r, \by_{n-1}, 3\alpha^2B_A^2, \alpha\equiv\frac{1}{2\Lhat_A}, \frac{2\Lhat_A}{\mu_A})$
\ELSE
        \STATE $\by_{n} = \mathtt{ASEG}(A+\partial r, \by_{n-1}, 3\alpha^2B_A^2, \alpha\equiv\frac{1}{2^{n-N}\Lhat_A}, \frac{2^{n-N+1}\Lhat_A}{\mu_A})$
        \ENDIF
     \ENDFOR
     \STATE \textbf{Output:} $\bz_{s} = \by_{N+M}^s$
      \end{algorithmic}
\label{alg:eg_sc}
\end{algorithm*}

\begin{algorithm*}[t]
\caption{Recursive regularization$(F^\mu+\partial r, \zinit, \mu, L_F, B, T)$}
\begin{algorithmic}
    \STATE Initial iterate $\bz_0=\zinit$, $\Lhat_F = L_F+B$, oracle budget $T\geq 48 \frac{\Lhat_F}{\mu} \left\lfloor\log_2\frac{\Lhat_F}{\mu}\right\rfloor$, $\Lhat_{s-1} = L_F+B+(2^s-1)\mu$ \\
    \STATE $F^0 = F^\mu$, $\mu_0=\mu$ \\
    \vspace{.2cm}
    \FOR{$s = 1, 2,\ldots, S=\lfloor \log_2\frac{\Lhat_F}{\mu} \rfloor $}
        \STATE $\bz_{s} = \mathtt{SEG^{SC}}(H^{s-1}:= F^{s-1}+\partial r, \bz_{s-1}, (2^s-1)\mu, \Lhat_{s-1}, B, T/S)$
        \STATE $\mu_s = 2\mu_{s-1}$
        \STATE $F^s(\bz) = F^{s-1}(\bz) + \mu_s(\bz-\bz_s)$
        \ENDFOR
      \end{algorithmic}
\label{alg:rec_reg}
\end{algorithm*}

The main algorithmic construction will be based on a key idea from \cite{allen2018make} who had focused on the same goal for composite convex optimization: \emph{recursive regularization}. Indeed, \cite{chen2024near} had also considered extension of \cite{allen2018make} to the min-max case. However, their extension came with three main drawbacks that we will address: (1) their analysis was limited to the unconstrained problem, (2) their analysis introduced a requirement for knowing the global variance upper bound and initial distance to optimum $\|\zinit-\bx^\star\|^2$, (3) they inherited the drawback from \cite{allen2018make} and required a globally bounded variance. We will go around these limitations by using the BG variance assumption and anchoring, proposed for a similar purpose in \cite{neu2024dealing} for proving a complexity result for the primal-dual gap.

\paragraph{Recursive regularization -- \Cref{alg:rec_reg}.} In our setting, this corresponds to solving strongly monotone inclusion problems with an increasing amount of added strong monotonicity. In particular,  this algorithm will give us an approximate solution for the problem
\begin{equation}\label{eq: delta_prob}
\text{find~} \bz^\star_0 \text{~such that~} 0\in H^\mu(\bz^\star_0) := F^\mu(\bz_0^\star) + \partial r(\bz^\star_0),
\end{equation}
where
\begin{equation}\label{eq: fdelta}
F^0(\bz) := F^\mu(\bz) = \begin{cases} F(\bz) \text{~if~} F\text{~is~}\mu\text{-strongly monotone,} \\
F(\bz) + \mu(\bz-\zinit) \text{~for some~} \mu>0 \text{~if~} F \text{~is monotone.} \end{cases}
\end{equation}
This setting ensures that $F^\mu$ is $\mu$-strongly monotone for some $\mu>0$, even if $F$ is only monotone.

Then, each step $s\geq 1$ of \Cref{alg:rec_reg} solves approximately the subproblem:
\begin{equation}\label{eq: fsdef}
\begin{aligned}
&\text{~find~} \bz_{s-1}^\star \text{~such that~} 0\in (F^{s-1} + \partial r)(\bz_{s-1}^\star),\\
&\text{~where~} F^{s-1}(\bz) = F^{s-2}(\bz) + \mu_{s-1} (\bz-\bz_{s-1}) = F^\mu(\bz) + \sum_{i=1}^{s-1} \mu_i(\bz-\bz_i), \text{~if~} s\geq 2,
\end{aligned}
\end{equation}
and for $s=1$, we have that $F^0$ is as defined in \eqref{eq: fdelta} and $\bz_0^\star$ is defined in \eqref{eq: delta_prob}.

That is, at iteration $s$, we form an auxiliary problem which is centered at current iterate $\bz_{s-1}$ and then use a strongly monotone subsolver in \Cref{alg:eg_sc} to solve this problem. The amount of strong monotonicity added is increasing exponentially fast, that is $\mu_s = 2^s\mu$ where $\mu$ is the initial strong monotonicity of $F^0=F^\mu$ defined in \eqref{eq: fdelta}. 

Our main aim is to solve a monotone problem, but similar to \cite{allen2018make}, the approach for the best complexity requires designing the strongly monotone solver with the gradient mapping guarantee (see \Cref{alg:rec_reg} and \Cref{cor:sm}). After this, we will invoke this result on a perturbed problem where the initial strong monotonicity level will depend on the desired accuracy, see \eqref{eq: fdelta}, case 2.

\paragraph{Strongly convex subsolver -- \Cref{alg:eg_sc}.}
The first layer was described in the previous section where we solve a sequence of strongly monotone subproblems. To solve these problems, we use a restarting-based second layer (\Cref{alg:eg_sc}) similar to \cite{allen2018make}. This method restarts \Cref{alg:eg_anchor} with two different step size and inner iteration limit pairs.

\paragraph{Main workhorse: Anchored stochastic extragradient -- \Cref{alg:eg_anchor}.}
Anchoring is mainly used to handle the BG variance assumption (Assumption \ref{asp: 2}). Indeed, if the variance is bounded, then $B=0$ and the algorithm reduces to regular stochastic extragradient method. However, the anchoring allows us to handle nonzero $B$, to cover problems without bounded variance.

We now continue with the summary of two main results that will be developed in the sequel.

\subsection{Complexity for Strongly Monotone Problems}
We now state the main complexity results for solving strongly monotone problems, which correspond to strongly convex-strongly concave instances of \eqref{eq: min-max}. We state this result under bounded variance, since we don't prove it under Blum-Gladyshev variance due to space constraints. The following result is proven in Section \ref{sec: bdd_var_str_monot}. A similar result under BG variance assumption can be proven by using the tools in Section \ref{sec: unbdd_var}. We omit this extension for brevity.
\begin{corollary}\label{cor:sm}
For problem \eqref{eq: mi}, let Assumption \ref{asp: 1} hold and suppose that Assumption \ref{asp: 2} holds with $B=0$. Additionally assume that $F$ is $\mu_F$-strongly monotone. Let $S=\lfloor \log_2(L_F/\mu_F) \rfloor\geq 1$, $T \geq 48SL_F/\mu_F$ and $\eta=\frac{1}{3L_F}$. Then, we have that 
\begin{equation}
\mathbb{E}\|\cG_{\eta, H}(\bz_S)\|\leq\varepsilon,
\end{equation}
where the number of stochastic first-order oracles $T$ is upper bounded by
\begin{equation*}
O\left( \frac{L_F}{\mu_F} \log\left(\frac{L_F}{\mu_F}\right) \log\left( \frac{\mu_F D_\star}{\varepsilon}+e\right) + \frac{G^2 \log^3(L_F/\mu_F)}{\varepsilon^2}\right).
\end{equation*}
where $O$ only suppresses the absolute constants.
\end{corollary}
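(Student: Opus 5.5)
The plan follows the recursive regularization template of \cite{allen2018make}, adapted to monotone inclusions. It uses three layers: a per-epoch guarantee for \Cref{alg:eg_anchor}, a restarting guarantee for \Cref{alg:eg_sc}, and a telescoping argument over the stages of \Cref{alg:rec_reg}. Throughout $B=0$, so $\beta=0$, there is no anchoring, and \Cref{alg:eg_anchor} is plain stochastic extragradient with averaging. The corollary is \Cref{alg:rec_reg} run with $F^\mu=F$ (case 1 of \eqref{eq: fdelta}) and $\mu=\mu_F$.

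\textbf{Step 1 (one epoch of ASEG).} For a $\mu_A$-strongly monotone, $\Lhat_A$-Lipschitz $A$ with variance $G^2$, I will prove the following bound for $\alpha\le 1/(2\Lhat_A)$. With $\by^\star$ the solution of $0\in (A+\partial r)(\by^\star)$, the standard extragradient one-step inequality gives, for every $\bx$ and every $k$,
\[
2\alpha\big(\langle A(\bx_{k+1/2}),\bx_{k+1/2}-\bx\rangle + r(\bx_{k+1/2})-r(\bx)\big)\le \|\bx_k-\bx\|^2-\|\bx_{k+1}-\bx\|^2 + \alpha^2\cdot O(G^2) + \text{martingale terms}.
\]
I will apply this at $\bx=\by^\star$, use strong monotonicity, average over $k$, and apply Jensen. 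This yields
\[
\mathbb{E}\|\hat\bx_K-\by^\star\|^2\le \frac{\|\bx_0-\by^\star\|^2}{\alpha\mu_A K}+O\!\left(\frac{\alpha G^2}{\mu_A}\right).
\]

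\textbf{Step 2 (SEG$^{SC}$).} In the first $N$ epochs, $K=2\Lhat_A/\mu_A$ and $\alpha=1/(2\Lhat_A)$. Each epoch then contracts the distance by a factor $1/2$ up to an $O(G^2/(\Lhat_A\mu_A))$ floor. After $N\approx T\mu_A/(16\Lhat_A)$ epochs the initial term has decayed like $2^{-N}$, which is the source of the $\log(\mu D_\star/\varepsilon)$ factor. In the last $M$ epochs the step size is halved while $K$ is doubled. Each epoch keeps the ratio $\alpha K$ fixed and halves the noise floor, which gives
\[
\mathbb{E}\|\bz_s-\by^\star\|^2\lesssim 2^{-N}\|\by_0-\by^\star\|^2 + \frac{G^2}{\mu_A^2 T}.
\]
This is the $O(1/T)$ rate for strongly monotone problems.

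\textbf{Step 3 (recursive regularization and gradient mapping).} Write $\bz_{s-1}^\star$ for the solution of stage $s$. Since $F^s=F^{s-1}+\mu_s(\cdot-\bz_s)$, the optimality conditions compared with strong monotonicity $(2^s-1)\mu$ should give
\[
\|\bz_s^\star-\bz_{s-1}^\star\|\le \frac{\mu_s}{\mu_s+(2^s-1)\mu}\,\|\bz_s-\bz_{s-1}^\star\|\le \|\bz_s-\bz_{s-1}^\star\|.
\]
I will use this to bound the initial distance of each stage, $\|\bz_{s-1}-\bz_{s-1}^\star\|$, by the previous stage's error. At the final stage $\Lhat_{S}\approx 2L_F$, since $2^S\mu\approx L_F$. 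With $\eta=1/(3L_F)$, I will bound $\|\cG_{\eta,H}(\bz_S)\|$ as follows. Nonexpansiveness of the prox and Lipschitzness give $\|\cG_{\eta,H}(\bz_S)-\cG_{\eta,H}(\bz_S^\star)\|\le (2/\eta+L_F)\|\bz_S-\bz_S^\star\|$. For the exact stage solution, $\bz_S^\star$ is a fixed point of the forward-backward map of $F^S$, so $\|\cG_{\eta,H}(\bz_S^\star)\|\le \sum_{i\le S}\mu_i\|\bz_S^\star-\bz_i\|$.

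Each $\|\bz_S^\star-\bz_i\|$ is controlled by telescoping the stage errors $\|\bz_i-\bz_{i-1}^\star\|$. Plugging Step 2 with $\mu_A=(2^i-1)\mu$ and budget $T/S$, the $i$-th error is $\lesssim G^2 S/((2^i\mu)^2T)$ plus a geometrically small initial term. Weighting by $\mu_i=2^i\mu$ and using Cauchy--Schwarz over the $S$ stages gives
\[
\mathbb{E}\|\cG\|\lesssim \sqrt{S}\Big(\sum_i \mu_i^2\,\mathbb{E}\|\bz_i-\bz_{i-1}^\star\|^2\Big)^{1/2}\lesssim \sqrt{\frac{G^2S^3}{T}},
\]
plus an exponentially decaying term. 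Setting this to $\varepsilon$ gives $T\gtrsim G^2\log^3(L_F/\mu_F)/\varepsilon^2$. Requiring $2^{-N}\mu D_\star\lesssim\varepsilon$ gives the first term of the bound.

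\textbf{Main obstacle.} The delicate step is Step 3, specifically the bookkeeping of the telescoping. The naive bound on $\|\bz_S^\star-\bz_i\|$ sums errors from all later stages. These must be weighted so that each error is multiplied by only $O(\mu_i)$, not $\mu_S$. I will first try the \cite{allen2018make} identity: with $\bz_i^\star$ defined sequentially, it shows $\sum_i\mu_i\|\bz_S^\star-\bz_i\|\lesssim\sum_i \mu_i\|\bz_i-\bz_{i-1}^\star\|$. Monotonicity of $F^{s}+\partial r$ replaces the function-value arguments used there. Controlling the initial-distance term across stages, by chaining it through the contraction above, is a secondary but routine issue.
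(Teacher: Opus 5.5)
Your proposal is correct and follows the paper's proof essentially step for step. It has the same four ingredients: the epoch bound for \Cref{alg:eg_anchor} with $\beta=0$; the two-phase restart analysis of \Cref{alg:eg_sc}; the stage-to-stage recursion on $\mu_s^2\mathbb{E}\|\bz_s-\bz_{s-1}^\star\|^2$ built on the comparison between consecutive regularized solutions; and the telescoped gradient-mapping bound. Comparing against the auxiliary solution $\bz_S^\star$, instead of passing through $\cG_{\eta,H^{S-1}}$ and $\bz_{S-1}^\star$, is only a cosmetic variant. The ``obstacle'' you flag is handled exactly as in the paper, by swapping the sums and using $\sum_{i\le j}\mu_i\le 2\mu_j$.

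Two small fixes. First, Step 1 should read $\frac{1}{2\alpha\mu_A K}$, which your own one-step inequality gives; with $\alpha\mu_A K=1$, the constant as you stated it yields no contraction. Second, you also need $\|\bz_s-\bz_s^\star\|\le\|\bz_s-\bz_{s-1}^\star\|$, both for the next stage's initial distance and for the last-stage term. It follows from your strong-monotonicity inequality, which gives $\langle \bz_s-\bz_s^\star,\bz_s^\star-\bz_{s-1}^\star\rangle\ge 0$; going through the triangle inequality instead would cost a constant factor in the contraction.
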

Even for unconstrained problems, our result extends \citep[Theorem 4.1]{chen2024near} since our algorithmic parameters do not depend on $\|\bz_0-\bx^\star\|$ or a global variance upper bound $\sigma^2$, which were required for running the algorithm in \citep[Theorem 4.1]{chen2024near}. The parameters in our algorithm only depend on the oracle budget $T$, $L_F$ and $\mu_F$.
\subsection{Complexity for Monotone Problems}
By using the result for the strongly monotone case with a perturbed version of the monotone problem, we can obtain the claimed complexity guarantee for solving the original monotone problem. For this we need to connect the solutions in terms of the gradient mapping norm and use the previous result. In particular, we will solve
\begin{equation}\label{eq: sgv4}
\text{find~} \bz^\star_0 \text{~such that~} 0\in H^\mu(\bz^\star_0) := F(\bz^\star_0) + \partial r(\bz^\star_0) + \mu(\bz^\star_0-\zinit),
\end{equation}
where $\mu = \widetilde{O}\left( \frac{1}{\sqrt{T}} \right)$. Since $\mu$ is small depending on the oracle budget $T$, the perturbed problem is sufficiently close to the original problem to yield the claimed complexity result on the gradient mapping. The proof of the following result is the focus of Section \ref{sec: unbdd_var}.
\begin{corollary}
For problem \eqref{eq: mi}, let Assumptions \ref{asp: 1} and \ref{asp: 2} hold. We apply \Cref{alg:rec_reg} to solve \eqref{eq: delta_prob} with $\mu = \frac{32\Lhat_F}{\sqrt{T}}\log_2^{3/2}\left( \frac{\sqrt{T}}{48} \right)$, $T\geq 14175$, $\eta=\frac{1}{3\Lhat_F}$. Then, we have that 
\begin{equation}
\mathbb{E}\|\cG_{\eta, H}(\bz_S)\|= O\left( \frac{\Lhat_FD_\star (\ln \sqrt{T})^{3/2}}{\sqrt{T}} + \frac{(\ln\sqrt{T})^{5/2}(BD_\star+G)}{\sqrt{T}} \right).
\end{equation}
Consequently, to obtain
\begin{equation}
\mathbb{E}\|\cG_{\eta, H}(\bz_S)\|\leq\varepsilon,
\end{equation}
the number of required stochastic first-order oracles $T$ is upper bounded by
\begin{equation*}
O\left( 1+\frac{(L_F^2+B^2)D_\star^2 + G^2}{\varepsilon^2}\ln^5\left(\frac{D_\star(L_F+B)+G}{\varepsilon}+1\right) \right),
\end{equation*}
where we only suppress the absolute constants.
\end{corollary}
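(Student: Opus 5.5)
The plan is to transfer a gradient-mapping guarantee for the perturbed problem \eqref{eq: sgv4} back to the original problem, and to obtain that guarantee from a BG-variance version of the strongly monotone analysis behind \Cref{cor:sm}, run through \Cref{alg:rec_reg}.

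\textbf{Step 1 (transfer).} Since $H^\mu = F + \mu(\cdot-\zinit)+\partial r$ and the prox is nonexpansive,
\begin{equation*}
\|\cG_{\eta,H}(\bz)\| \le \|\cG_{\eta,H^\mu}(\bz)\| + \mu\|\bz-\zinit\|.
\end{equation*}
Next, $\bz_0^\star$ is the solution of the $\mu$-strongly monotone problem \eqref{eq: sgv4}. Testing its inclusion against $\bx^\star$ and using monotonicity of $F+\partial r$ gives $\langle \bz_0^\star-\zinit,\bz_0^\star-\bx^\star\rangle\le 0$. Hence $\|\bz_0^\star-\zinit\|\le D_\star$. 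It therefore suffices to bound $\mathbb{E}\|\cG_{\eta,H^\mu}(\bz_S)\|$ and $\mu\,\mathbb{E}\|\bz_S-\zinit\| \le \mu D_\star + \mu\,\mathbb{E}\|\bz_S-\bz_0^\star\|$. With $\mu=\widetilde O(\Lhat_F/\sqrt T)$, the term $\mu D_\star$ already produces the first term of the claimed bound.

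\textbf{Step 2 (strongly monotone analysis under BG variance).} This step redoes \Cref{cor:sm} for $F^{s-1}$, which is $(2^s-1)\mu$-strongly monotone and $\Lhat_{s-1}$-Lipschitz, with variance at most $B^2\|\bx-\zinit\|^2+G^2$.

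\emph{Inner loop.} For one call of \Cref{alg:eg_anchor} I would derive a one-step inequality using the prox-inequality \eqref{eq: prox_ineq} at both half-steps, with the anchor weight $\beta=3\alpha^2B^2$. The term $\beta\langle \zinit-\bx_k,\cdot\rangle$ produces $-\beta\|\bx_k-\zinit\|^2$-type terms that absorb the $\alpha^2B^2\|\bx_{k}-\zinit\|^2$ variance terms. This should leave a contraction in $\mathbb{E}\|\bx_k-\bz_{s-1}^\star\|^2$, with additive error $O(\alpha^2G^2 + \alpha^2B^2\|\bz^\star_{s-1}-\zinit\|^2)$ and a bias $O(\beta\|\bz_{s-1}^\star-\zinit\|^2)$.

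\emph{Restarts in \Cref{alg:eg_sc}.} The first $N$ epochs contract the distance geometrically. The $M$ halving-step epochs drive the noise floor to $O(G'^2/(\mu_s^2 T/S))$, where $G'^2=G^2+B^2\|\bz_{s-1}^\star-\zinit\|^2$. This yields a bound on $\mathbb{E}\|\bz_s-\bz_{s-1}^\star\|^2$.

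\emph{Outer loop.} Following \cite{allen2018make}, one shows $\|\bz_{s}^\star-\bz_{s-1}^\star\|\lesssim\|\bz_s-\bz_{s-1}^\star\|$ and that $\|\bz_{s-1}^\star-\zinit\|\le 2D_\star$ plus accumulated errors. Telescoping $\cG_{\eta,H^\mu}(\bz_S)$ against $\sum_s\mu_s(\bz_s-\bz_{s-1}^\star)$, with $\mu_S\approx\Lhat_F$ and $\eta=1/(3\Lhat_F)$, then gives
\begin{equation*}
\mathbb{E}\|\cG_{\eta,H^\mu}(\bz_S)\|\lesssim \sum_s\mu_s\sqrt{\mathbb{E}\|\bz_s-\bz_{s-1}^\star\|^2}\lesssim S\sqrt{S}\,\frac{G+BD_\star}{\sqrt T}+\mu D_\star\,\mathrm{polylog}.
\end{equation*}
The same per-stage bounds control $\mathbb{E}\|\bz_S-\bz_0^\star\|$, which completes Step 1.

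\textbf{Main obstacle.} The hardest part is the inner one-step anchored extragradient inequality under BG variance. The variance depends on $\|\bx_{k+1/2}-\zinit\|$, and $\bx_{k+1/2}$ itself depends on $\xi_k$, so the error terms must be carefully conditioned with $\mathbb{E}_k$. At the same time, the anchor bias must not spoil the $\mu_s$-contraction. My first attempt would be Young's inequality splitting $\|\bx_{k+1/2}-\zinit\|^2\le 2\|\bx_{k+1/2}-\bx_k\|^2+2\|\bx_k-\zinit\|^2$. The first piece is absorbed by the extragradient $-(1-\alpha^2\Lhat^2)\|\bx_{k+1/2}-\bx_k\|^2$ term, using $\alpha\le 1/(2\Lhat)$, and the second by the anchor.

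\textbf{Step 3 (parameters).} Plug in $S=\log_2(\Lhat_F/\mu)=O(\ln\sqrt T)$. The budget condition $T\ge 48\frac{\Lhat_F}{\mu}S$ holds by the choice of $\mu$ and $T\ge14175$. This gives the displayed $O(\cdot)$ bound. Inverting it, $T\approx\varepsilon^{-2}\,\mathrm{polylog}$ with $\ln^5$ coming from squaring $(\ln\sqrt T)^{5/2}$, gives the oracle complexity.
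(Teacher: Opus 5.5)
\paragraph{The gap: the outer-loop feedback under BG variance.} Your Step 1, Step 3, the inner-loop and restart steps (parallel to \Cref{lem: eg} and \Cref{lem: level2}), and the telescoping via \Cref{eq: nonexp1} and \Cref{eq: grad_map_sym_bd} all match the paper. The gap is the outer-loop sentence saying that $\|\bz_{s-1}^\star-\zinit\|$ is at most $2D_\star$ ``plus accumulated errors'', after which you write down a bound involving only $BD_\star+G$.

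Under \Cref{asp: 2}, \Cref{lem: level2} with budget $T/S$ gives stage $s$ a noise floor of order $S(B^2\,\E\|\zinit-\bz_{s-1}^\star\|^2+G^2)/(T\mu_{s-1}^2)$. Moreover $\|\zinit-\bz_{s-1}^\star\|\le\|\zinit-\bz_0^\star\|+\sum_{j<s}\|\bz_j-\bz_{j-1}^\star\|$. So errors of early stages feed back, amplified by $B^2/(T\mu_{s-1}^2)$, into the variance of later stages. This never happens in \cite{allen2018make} or \cite{chen2024near}, where the variance is uniformly bounded, so it cannot be imported from there.

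Writing $e_s:=\E\|\bz_s-\bz_{s-1}^\star\|^2$, you must show that the recursion
\begin{equation*}
e_s\le \tfrac14 e_{s-1}+\frac{192S^2B^2}{T\mu_{s-1}^2}\Bigl(\|\zinit-\bz_0^\star\|^2+\sum_{j=1}^{s-1}e_j\Bigr)+\frac{192SG^2}{T\mu_{s-1}^2}
\end{equation*}
stays bounded. The paper does this by induction, proving $e_s\le\|\zinit-\bz_0^\star\|^2+G^2/\Lhat_F^2$ (\Cref{lem: iter_dif-m}). Since $\sum_{j<s}e_j$ can be of size $S$ times that bound, the induction requires the feedback coefficient to be at most $1/(4S)$, i.e.\ $\mu^2\ge 1024S^3\Lhat_F^2/T$ (\Cref{lem:numer}(iii)).

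This lower bound on $\mu$ is the real reason for $\mu=\widetilde\Theta(\Lhat_F/\sqrt T)$. It is not merely that $\mu D_\star$ then has the right size. With the bounded-variance choice $\mu\sim\log T/T$, the bound on $e_1$ already grows like $T/\log T$. Fed into stage $2$, it yields a bound on $\mu_2^2e_2$ that does not decay with $T$. Your proposal uses the $1/\sqrt T$ scaling only for the $\mu D_\star$ term, so it never confronts the constraint that actually pins $\mu$ down.

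\paragraph{Consequences for the exponents and the claimed obstacle.} Your Step 2 claims an $S^{3/2}(G+BD_\star)/\sqrt T$ bound, while Step 3 invokes $(\ln\sqrt T)^{5/2}$ to produce $\ln^5$; these are inconsistent. In the paper the extra factor of $S$ comes precisely from the missing step. Cauchy--Schwarz and the induction give $\E\|\zinit-\bz_{s-1}^\star\|^2\le 2S^2\|\zinit-\bz_0^\star\|^2+S^2G^2/\Lhat_F^2$. This turns the per-stage constant into $2048S^3\Ghat^2/T$, and \Cref{th: it_dif_to_gradmapmu} then sums it to $S^{5/2}$. Your exponents are matched to the target rather than derived.

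Conversely, the difficulty you single out as the main obstacle is routine. Because $\bar\bx_k$ is a convex combination, the identity
\begin{equation*}
\|\bx^\star_A-\bar\bx_k\|^2-\|\bar\bx_k-\bx_{k+1/2}\|^2=\beta\bigl(\|\bx^\star_A-\zinit\|^2-\|\zinit-\bx_{k+1/2}\|^2\bigr)+(1-\beta)\bigl(\|\bx^\star_A-\bx_k\|^2-\|\bx_k-\bx_{k+1/2}\|^2\bigr)
\end{equation*}
produces $-\beta\|\bx_{k+1/2}-\zinit\|^2$ directly. The tower property handles the conditioning, so no splitting of $\|\bx_{k+1/2}-\zinit\|^2$ is needed.
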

A similar discussion as the previous subsection apply here to compare our result with the corresponding result of \cite{chen2024near}. In particular, we not only improve over this result to handle constrained or regularized problems, but also require a weaker variance assumption, and use algorithmic parameters only depending on $L_F, T, B$. An additional point of discussion is the parameter $\mu$, which is independent of $\|\zinit-\bx^\star\|^2$, whereas this quantity was used to set $\mu$ in the work of  \cite{chen2024near}. To transfer our guarantees to a guarantee on the stronger \emph{tangent residual} (see its definition, e.g., in \cite{cai2022tight}), one can apply a postprocessing step similar to \cite[Appendix C.3]{cai2024variance}.

\section{Analysis for Subsolvers}
\subsection{Layer 1: Anchored Stochastic Extragradient}
We now analyze Alg. \ref{alg:eg_anchor} which is the main workhorse of the construction. It is an extragradient-based method for solving a strongly monotone inclusion problem. The addition on top of the standard extragradient is the Halpern-type anchoring step \cite{halpern1967fixed}. 

Even though we are not aware of this algorithm being proposed or analyzed before; in spirit, it can be considered as a stochastic version of regular anchored extragradient (see \cite{yoon2021accelerated} and \citep[Algorithm 3]{kovalev2022first}) or a single-sample version of \cite{lee2021fast}, or non-variance reduced version of \citep[Section 4.3]{alacaoglu2025towards}. Our choice for building on extragradient is purely for presentation purposes, as similar bounds can be proven for anchored versions of other algorithms.

All these works above focused on the non-strongly monotone case. However, our subproblems are strongly monotone, so we analyze for this case. We will call this method and its complexity result for a series of subproblems, so we introduce the problem:
\begin{align}\label{eq: prob_innermost1}
\text{find~}\bx^\star_A \text{~such that~} 0\in (A+\partial r)(\bx^\star_A),
\end{align}
when $A$ is strongly monotone. At each step $s$ of \Cref{alg:rec_reg}, we will change $A$ depending on $F^{s-1}$, see \eqref{eq: fsdef}.

For convenience, let us introduce the assumptions to be used in this and next subsection.
\begin{assumption}\label{asp: inner}
For problem \eqref{eq: prob_innermost1}, let $A$ be monotone, $L_A$-Lipschitz and $\mu_A$-strongly monotone. Assume that $r$ is proper, convex, closed. There exists $\bx^\star_A$ such that $0\in(A+\partial r)(\bx^\star_A)$. 

We have access to an unbiased estimator of $A$ such that $\mathbb{E}[A_\xi(\bx)] = A(\bx)$ and
\begin{equation*}
\mathbb{E} \| A_\xi(\bx) - A(\bx) \|^2 \leq B_A^2 \| \bx-\zinit\|^2 + G_A^2,
\end{equation*}
where $\zinit$ is a globally fixed iterate.
\end{assumption}
To see the connection between the parameters $B_A, G_A$ of the variance assumption in this section and the variance assumption made for the global problem (Assumption \ref{asp: 2}), see Fact \ref{fact:struc}.

The following lemma, when $\beta=0$ is completely classical, see \cite{juditsky2011solving} for an early reference. We provide a generalization here with the anchoring step. We build on \cite{neu2024dealing,alacaoglu2025towards} who analyzed similar algorithms in different contexts, without strong monotonicity. The main idea is to use the anchoring with weight $\beta$ to absorb the contribution coming from the iterate-dependent $B_A$ term in the BG assumption (Assumption \ref{asp: inner}), which allows relaxing the bounded variance assumption.
\begin{lemma}\label{lem: eg}
For problem \eqref{eq: prob_innermost1}, let Assumption \ref{asp: inner} hold.
When we run Algorithm \ref{alg:eg_anchor} for $K$ iterations, with parameters $\alpha \leq  \frac{1}{2(L_A+B_A)}$, $\beta=3\alpha^2B_A^2$, we obtain
\begin{equation*}
\mathbb{E} \| \hat \bx_K^{s, n} - \bx^\star_A \|^2 \leq \frac{1}{2\alpha \mu_A K} \| \bx_0^{s, n}-\bx^\star_A\|^2 + \frac{3\alpha}{\mu_A}\left(B_A^2\| \zinit-\bx^\star_A\|^2 + \frac{7G_A^2}{18} \right).
\end{equation*}
\end{lemma}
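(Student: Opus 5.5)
We establish a one-step inequality for the anchored extragradient iteration, then telescope and average, using strong monotonicity at the half-iterates and Jensen's inequality for $\hat\bx_K^{s,n}$. Write $\bx^\star=\bx^\star_A$, and let $e_k = A_{\xi_k}(\bx_k)-A(\bx_k)$ and $e_{k+1/2}=A_{\xi_{k+1/2}}(\bx_{k+1/2})-A(\bx_{k+1/2})$ denote the noise terms.

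\textbf{Step 1: prox-inequalities.} Apply \eqref{eq: prox_ineq} to $\bx_{k+1}$ with test point $\bx^\star$, and to $\bx_{k+1/2}$ with test point $\bx_{k+1}$. Adding the two inequalities eliminates the $r$-terms up to $r(\bx_{k+1/2})-r(\bx^\star)$. This residual is handled by the solution inequality $\langle A(\bx^\star),\bx_{k+1/2}-\bx^\star\rangle\ge r(\bx^\star)-r(\bx_{k+1/2})$. Expanding with $\bar\bx_k=\bx_k+\beta(\zinit-\bx_k)$ gives
\begin{align*}
\|\bx_{k+1}-\bx^\star\|^2 &\le \|\bar\bx_k-\bx^\star\|^2-\|\bx_{k+1}-\bx_{k+1/2}\|^2-\|\bx_{k+1/2}-\bar\bx_k\|^2\\
&\quad -2\alpha\langle A_{\xi_{k+1/2}}(\bx_{k+1/2})-A(\bx^\star),\bx_{k+1/2}-\bx^\star\rangle\\
&\quad +2\alpha\langle A_{\xi_{k+1/2}}(\bx_{k+1/2})-A_{\xi_k}(\bx_k),\bx_{k+1/2}-\bx_{k+1}\rangle .
\end{align*}

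\textbf{Step 2: the error terms.} We take $\mathbb{E}_{k}$ conditional on $\bx_{k+1/2}$, so that $e_{k+1/2}$ has zero mean against $\bx_{k+1/2}-\bx^\star$. Strong monotonicity then yields the good term $-2\alpha\mu_A\|\bx_{k+1/2}-\bx^\star\|^2$. For the last inner product we split
\[
A_{\xi_{k+1/2}}(\bx_{k+1/2})-A_{\xi_k}(\bx_k)=\bigl(A(\bx_{k+1/2})-A(\bx_k)\bigr)+e_{k+1/2}-e_k ,
\]
and use Young's inequality to bound it by $\tfrac12\|\bx_{k+1}-\bx_{k+1/2}\|^2$ plus $3\alpha^2\bigl(L_A^2\|\bx_{k+1/2}-\bx_k\|^2+\|e_{k+1/2}\|^2+\|e_k\|^2\bigr)$. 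The mismatch between $\bx_k$ and $\bar\bx_k$ costs $\beta^2\|\zinit-\bx_k\|^2$, which is absorbed by the condition $\alpha\le \frac{1}{2(L_A+B_A)}$ and by $\|\bx_{k+1/2}-\bar\bx_k\|^2$. By Assumption \ref{asp: inner}, $\mathbb{E}\|e\|^2\le B_A^2\|\cdot-\zinit\|^2+G_A^2$. Writing $\|\bx-\zinit\|^2\le 2\|\bx-\bx^\star\|^2+2\|\bx^\star-\zinit\|^2$ produces the constant $B_A^2\|\zinit-\bx^\star\|^2$, together with iterate-dependent terms of order $\alpha^2B_A^2\|\bx_k-\bx^\star\|^2$ and $\alpha^2B_A^2\|\bx_{k+1/2}-\bx^\star\|^2$. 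The half-iterate term is moved to $\bx_k$ via $\|\bx_{k+1/2}-\bx_k\|^2$.

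\textbf{Step 3: the anchor.} This is the main obstacle: the positive $\alpha^2B_A^2\|\bx_k-\bx^\star\|^2$ terms cannot be telescoped away, and here the anchoring does the work. Convexity gives
\[
\|\bar\bx_k-\bx^\star\|^2 = (1-\beta)\|\bx_k-\bx^\star\|^2+\beta\|\zinit-\bx^\star\|^2-\beta(1-\beta)\|\bx_k-\zinit\|^2 .
\]
With $\beta=3\alpha^2B_A^2$, the $-\beta\|\bx_k-\bx^\star\|^2$ contraction (and the negative $\|\bx_k-\zinit\|^2$ term) cancels the iterate-dependent variance contributions. This is where the constants must be tracked carefully so that the leftover is exactly $3\alpha^2(B_A^2\|\zinit-\bx^\star\|^2+\tfrac{7}{18}G_A^2)$ per step, which is plausible since $\beta\|\zinit-\bx^\star\|^2$ itself contributes $3\alpha^2B_A^2\|\zinit-\bx^\star\|^2$. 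The result is
\[
\mathbb{E}\|\bx_{k+1}-\bx^\star\|^2\le \mathbb{E}\|\bx_k-\bx^\star\|^2-2\alpha\mu_A\mathbb{E}\|\bx_{k+1/2}-\bx^\star\|^2+3\alpha^2\bigl(B_A^2\|\zinit-\bx^\star\|^2+\tfrac{7}{18}G_A^2\bigr).
\]

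\textbf{Step 4: conclusion.} Summing over $k=0,\dots,K-1$, dropping $\|\bx_K-\bx^\star\|^2\ge 0$, dividing by $2\alpha\mu_A K$, and applying Jensen's inequality to $\hat\bx_K^{s,n}=\frac1K\sum_{k}\bx_{k+1/2}$ yields exactly the two terms of the claimed bound.
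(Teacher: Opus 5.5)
There is a genuine gap, and it sits at the step you yourself flag as the main obstacle. Your architecture matches the paper's: the two prox-inequalities, strong monotonicity at $\bx_{k+1/2}$ after conditioning, the anchor absorbing the $B_A$-dependent variance, then telescoping and Jensen. But Step 3 is asserted rather than derived, and the one-step inequality you write there is false in general.

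Here is a counterexample. Take $d=1$, $r\equiv 0$, $A(x)=\mu_A x$ and $A_\xi(x)=\mu_A x+\zeta$ with $\mathbb{E}\zeta=0$, $\mathbb{E}\zeta^2=G_A^2$. Then $B_A=\beta=0$, $L_A=\mu_A$ and $\bx^\star_A=0$. Choose $\alpha=\frac{1}{2\mu_A}$ and $\bx_0=0$. Then $\bx_{1/2}=-\alpha\zeta_0$ and $\bx_1=\alpha^2\mu_A\zeta_0-\alpha\zeta_{1/2}$, so $\mathbb{E}\|\bx_1\|^2=\frac54\alpha^2G_A^2$. Your right-hand side is $-\alpha^2G_A^2+\frac76\alpha^2G_A^2=\frac16\alpha^2G_A^2$.

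The correct per-step remainder is twice yours: $2\beta\|\zinit-\bx^\star_A\|^2+\frac73\alpha^2G_A^2=6\alpha^2\bigl(B_A^2\|\zinit-\bx^\star_A\|^2+\frac{7}{18}G_A^2\bigr)$. The anchor identity contributes one copy of $\beta\|\zinit-\bx^\star_A\|^2$. Trading $-\beta\|\bx_k-\bx^\star_A\|^2$ for $-\frac{\beta}{2}\|\bx_k-\zinit\|^2$ contributes a second copy, which your accounting misses. Dividing this remainder by $2\alpha\mu_A$ is what yields $\frac{3\alpha}{\mu_A}(\cdots)$. Note also that your Step 4 does not follow from your Step 3, since $3\alpha^2(\cdots)/(2\alpha\mu_A)$ gives $\frac{3\alpha}{2\mu_A}(\cdots)$.

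The estimates you sketch in Step 2 are also too lossy to reach the stated constants with the prescribed $\beta=3\alpha^2B_A^2$. First, Young with $\frac12\|\bx_{k+1}-\bx_{k+1/2}\|^2$ gives a factor $6\alpha^2$, not $3\alpha^2$. Even granting $3\alpha^2$, the three-way split leaves $6\alpha^2G_A^2$ per step, i.e.\ $\frac{3\alpha}{\mu_A}G_A^2$ in the final bound instead of $\frac{7\alpha}{6\mu_A}G_A^2$. Moreover, once $3\alpha^2B_A^2\|\bx_{k+1/2}-\zinit\|^2$ is routed through $\bx_k$, the coefficient of $\|\bx_k-\zinit\|^2$ exceeds $6\alpha^2B_A^2=2\beta$, while the anchor supplies strictly less than $2\beta$ of $-\|\bx_k-\zinit\|^2$.

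The paper's bookkeeping closes because of three ingredients.

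(a) Since $\bx_{k+1/2}$ is determined before $\xi_{k+1/2}$ is drawn, the tower property kills the cross term: $\mathbb{E}\|A_{\xi_k}(\bx_k)-A_{\xi_{k+1/2}}(\bx_{k+1/2})\|^2=\mathbb{E}\|A_{\xi_k}(\bx_k)-A(\bx_{k+1/2})\|^2+\mathbb{E}\|e_{k+1/2}\|^2$. Young with weights $\frac43$ and $4$ on the first term then leaves only $\alpha^2\bigl(\frac43\|e_k\|^2+\|e_{k+1/2}\|^2\bigr)$, hence $\frac73\alpha^2G_A^2$, plus $4\alpha^2L_A^2\|\bx_k-\bx_{k+1/2}\|^2$.

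(b) Apply the convex-combination identity to both $\|\bx^\star_A-\bar\bx_k\|^2$ and $\|\bx_{k+1/2}-\bar\bx_k\|^2$. The $\beta(1-\beta)\|\bx_k-\zinit\|^2$ terms cancel, and you obtain $-\beta\|\zinit-\bx_{k+1/2}\|^2$, which absorbs $\alpha^2B_A^2\|\bx_{k+1/2}-\zinit\|^2$ directly with no rerouting. You also obtain $-(1-\beta)\|\bx_k-\bx_{k+1/2}\|^2$, which absorbs $4\alpha^2L_A^2\|\bx_k-\bx_{k+1/2}\|^2$ because $4\alpha^2L_A^2+3\alpha^2B_A^2\le 4\alpha^2(L_A+B_A)^2\le 1$.

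(c) The bound $-\beta\|\bx^\star_A-\bx_k\|^2\le-\frac{\beta}{2}\|\zinit-\bx_k\|^2+\beta\|\zinit-\bx^\star_A\|^2$ absorbs $\frac43\alpha^2B_A^2\|\bx_k-\zinit\|^2$, since $\frac{\beta}{2}=\frac32\alpha^2B_A^2$.
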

\begin{remark}
In this bound, the additional term $B^2_A\| \zinit-\bx^\star_A\|^2$ coming from the relaxed variance requirement seems not problematic since $\bx^\star_A$ is fixed. However, we will later invoke this result with a dynamic set of problems where the solution $\bx^\star_A$ will be changing at each step. As a result, we cannot treat this term as a constant. We will have to craft a specialized analysis to handle this additional term since we do not have a global upper bound for the solutions of inner problems. This will be handled in Section \ref{sec: unbdd_var}.
\end{remark}
\begin{proof}[Proof of \Cref{lem: eg}]
For a lighter notation, we suppress the superscript $s, n$ from the iterates, as in the algorithm.

By the update rules of the iterates $\bx_{k+1/2}, \bx_{k+1}$ in Algorithm \ref{alg:eg_anchor}, and \eqref{eq: prox_ineq}, we have
\begin{equation*}
\begin{aligned}
\langle \bx_{k+1/2} - \bar \bx_k + \alpha A_{\xi_k}(\bx_k), \bx_{k+1} - \bx_{k+1/2} \rangle \geq \alpha (r(\bx_{k+1/2}) - r(\bx_{k+1})),\\
\langle \bx_{k+1} - \bar \bx_k + \alpha A_{\xi_{k+1/2}}(\bx_{k+1/2}), \bx^\star_A - \bx_{k+1} \rangle \geq \alpha (r(\bx_{k+1}) - r(\bx^\star_A)).
\end{aligned}
\end{equation*}
We sum up these inequalities and rearrange to obtain
\begin{equation}\label{eq: anch_step1}
\begin{aligned}
&\alpha(r(\bx_{k+1/2}) - r(\bx^\star_A)+\langle A_{\xi_{k+1/2}}(\bx_{k+1/2}), \bx_{k+1/2} - \bx^\star_A \rangle) \\
&\leq  \langle \bx_{k+1/2} - \bar \bx_k, \bx_{k+1} - \bx_{k+1/2} \rangle + \langle \bx_{k+1} - \bar \bx_k, \bx^\star_A - \bx_{k+1} \rangle \\
&\quad + \alpha \langle A_{\xi_k}(\bx_k) - A_{\xi_{k+1/2}}(\bx_{k+1/2}), \bx_{k+1}-\bx_{k+1/2} \rangle.
\end{aligned}
\end{equation}
We estimate the first two terms on the right-hand side. Applying $\|\ba+\bb\|^2 = \| \ba\|^2 + \|\bb\|^2 + 2\langle \ba,\bb \rangle$ gives
\begin{align*}
2\langle \bx_{k+1/2} - \bar \bx_k, \bx_{k+1} - \bx_{k+1/2} \rangle &= \| \bx_{k+1}-\bar\bx_k\|^2 - \| \bx_{k+1/2} - \bar\bx_k\|^2 - \| \bx_{k+1} - \bx_{k+1/2}\|^2,\\
2\langle \bx_{k+1} - \bar \bx_k, \bx^\star_A-\bx_{k+1}  \rangle &= \| \bx^\star_A-\bar\bx_k\|^2 - \| \bx_{k+1} - \bar\bx_k\|^2 - \| \bx^\star_A - \bx_{k+1}\|^2.
\end{align*}
Plugging in these identities to \eqref{eq: anch_step1}, after multiplying both sides by $2$, and taking expectation give
\begin{equation}\label{eq: anch_step2}
\begin{aligned}
&2\alpha\mathbb{E}[r(\bx_{k+1/2}) - r(\bx^\star_A)+\langle A_{\xi_{k+1/2}}(\bx_{k+1/2}), \bx_{k+1/2} - \bx^\star_A\rangle ] \\
&\leq \mathbb{E} \left[ \| \bx^\star_A-\bar\bx_k\|^2 - \| \bx^\star_A-\bx_{k+1}\|^2 - \| \bx_{k+1} - \bx_{k+1/2} \|^2 - \| \bx_{k+1/2}-\bar\bx_k\|^2\right] \\
&\quad + 2\alpha\mathbb{E} \langle A_{\xi_k}(\bx_k) - A_{\xi_{k+1/2}}(\bx_{k+1/2}), \bx_{k+1}-\bx_{k+1/2} \rangle.
\end{aligned}
\end{equation}
We now estimate the inner product on the right-hand side by
\begin{align}
&2\alpha\mathbb{E} \langle A_{\xi_k}(\bx_k) - A_{\xi_{k+1/2}}(\bx_{k+1/2}), \bx_{k+1} - \bx_{k+1/2} \rangle \notag \\
 &\leq \mathbb{E}\left[\alpha^2\|A_{\xi_k}(\bx_k) - A_{\xi_{k+1/2}}(\bx_{k+1/2})\|^2 + \|\bx_{k+1} - \bx_{k+1/2}\|^2 \right] \notag \\
&\leq \alpha^2 \mathbb{E} \left[ \frac43\| A_{\xi_k}(\bx_k)-A(\bx_k)  \|^2 + \| A(\bx_{k+1/2}) - A_{\xi_{k+1/2}}(\bx_{k+1/2}) \|^2 + 4L_A^2  \| \bx_{k+1/2}-\bx_k  \|^2 \right] \notag \\
&\quad +\E\|\bx_{k+1} - \bx_{k+1/2}\|^2.\label{eq: sof4}
\end{align}
Here, the first step is by Young's inequality and the second step used $L_A$-Lipschitzness of $A$ after applying the estimation
\begin{align*}
&\mathbb{E}\left[ \|A_{\xi_k}(\bx_k) - A_{\xi_{k+1/2}}(\bx_{k+1/2})\|^2 \right] \\
&= \mathbb{E} \left[ \| A_{\xi_k}(\bx_k) - A(\bx_{k+1/2})\|^2 + \| A(\bx_{k+1/2}) - A_{\xi_{k+1/2}}(\bx_{k+1/2})\|^2 \right] \\
&\leq \mathbb{E} \left[ \frac{4}{3}\| A_{\xi_k}(\bx_k) - A(\bx_k)\|^2 + 4\| A(\bx_k) - A(\bx_{k+1/2})\|^2 + \|A(\bx_{k+1/2}) - A_{\xi_{k+1/2}}(\bx_{k+1/2})\|^2 \right],
\end{align*}
where the first line is by tower property, since $\mathbb{E}_k[A(\bx_{k+1/2}) - A_{\xi_{k+1/2}}(\bx_{k+1/2})] = 0$ and $A_{\xi_k}(\bx_k) - A(\bx_{k+1/2})$ is measurable under the conditioning of $\mathbb{E}_k$.

For the inner product on the left-hand side of \eqref{eq: anch_step2}, we use strong monotonicity of $A$ to derive
\begin{align}
\mathbb{E}\langle A_{\xi_{k+1/2}}(\bx_{k+1/2}), \bx_{k+1/2} - \bx^\star_A\rangle &= \mathbb{E}\langle {A}(\bx_{k+1/2}), \bx_{k+1/2}-\bx^\star_A\rangle\notag \\
&\geq \mathbb{E}[\langle {A}(\bx^\star_A), \bx_{k+1/2}-\bx^\star_A\rangle + \mu_A \| \bx^\star_A-\bx_{k+1/2}\|^2]\notag \\
&\geq \mathbb{E}[r(\bx^\star_A) - r(\bx_{k+1/2}) +\mu_A \| \bx^\star_A-\bx_{k+1/2}\|^2],\label{eq: str_mont}
\end{align}
where the first identity used the tower property, $\mathbb{E}_k[A_{\xi_{k+1/2}}(\bx_{k+1/2})] = A(\bx_{k+1/2})$, and that $\bx_{k+1/2}-\bx^\star_A$ is measurable under the conditioning of $\mathbb{E}_k$. The third step used convexity of $r$ as well as $-A(\bx^\star_A) \in \partial r(\bx^\star_A)$ by the definition of the solution $\bx^\star_A$ in \eqref{eq: prob_innermost1}.
Using \eqref{eq: sof4} and \eqref{eq: str_mont} in \eqref{eq: anch_step2} gives
\begin{align}
2\alpha\mu_A \mathbb{E} \| \bx^\star_A - \bx_{k+1/2}\|^2 &\leq \mathbb{E} \left[ \| \bx^\star_A-\bar\bx_k\|^2 - \| \bx^\star_A-\bx_{k+1}\|^2  - \| \bar\bx_k-\bx_{k+1/2}\|^2\right] \notag \\
&\quad + \alpha^2\mathbb{E}\left[ \frac43\| A(\bx_k) - A_{\xi_k}(\bx_k)\|^2 + \| A(\bx_{k+1/2}) - A_{\xi_{k+1/2}}(\bx_{k+1/2})\|^2 \right]\notag \\
&\quad  + 4\alpha^2L_A^2 \mathbb{E}\| \bx_k-\bx_{k+1/2}\|^2.\label{eq: sob5}
\end{align}
On the one hand, by the definition of $\bar\bx_k$ in Algorithm \ref{alg:eg_anchor}, we have
\begin{align*}
\|\bx^\star_A - \bar\bx_k\|^2 - \|\bar\bx_k-\bx_{k+1/2}\|^2 &= \beta(\|\bx^\star_A - \zinit\|^2 - \|\zinit-\bx_{k+1/2}\|^2)+ (1-\beta)(\|\bx^\star_A-\bx_k\|^2 - \|\bx_k-\bx_{k+1/2}\|^2)
\end{align*}
and, by Young's inequality,
\begin{equation*}
-\beta \|\bx^\star_A-\bx_{k}\|^2 \leq -\frac{\beta}{2}  \| \zinit-\bx_{k}\|^2 + \beta \| \zinit-\bx^\star_A\|^2.
\end{equation*}
On the other hand, to handle the terms in the second line of \eqref{eq: sob5}, we have by Assumption \ref{asp: inner} that
\begin{align*}
&\alpha^2\mathbb{E}\left[ \frac{4}{3}\| A(\bx_k) - A_{\xi_k}(\bx_k)\|^2 + \| A(\bx_{k+1/2}) - A_{\xi_{k+1/2}}(\bx_{k+1/2})\|^2 \right] \\
&\quad\leq \alpha^2B_A^2\mathbb{E}\left[ \frac{4}{3}\| \bx_k-\zinit\|^2 + \| \bx_{k+1/2} - \zinit\|^2 \right] + \frac73\alpha^2G_A^2.
\end{align*}
Combining the last three estimates in \eqref{eq: sob5} gives
\begin{align}
2\alpha\mu_A \mathbb{E} \| \bx^\star_A - \bx_{k+1/2}\|^2 &\leq \mathbb{E} \left[\| \bx^\star_A-\bx_k\|^2 - \| \bx^\star_A-\bx_{k+1}\|^2+2\beta\|\zinit-\bx^\star_A\|^2 \right] +\frac73\alpha^2G_A^2\notag \\
&\quad + \left( \frac{4\alpha^2B_A^2}{3} - \frac{\beta}{2} \right)\mathbb{E} \| \zinit-\bx_k\|^2 + (\alpha^2B_A^2-\beta)\E\|\zinit-\bx_{k+1/2}\|^2 \notag \\
&\quad  + \left(4\alpha^2L_A^2-(1-\beta) \right) \mathbb{E}\| \bx_k-\bx_{k+1/2}\|^2.
\end{align}
By the requirements on $\alpha$ and $\beta$, we have that the last three terms on the right-hand side are nonpositive.

Dividing both sides by $2\alpha\mu_A$, summing for $k=0,\dots, K-1$, dividing by $K$ and using the definition of $\hat \bx_K^{s, n}$  from \Cref{alg:eg_anchor} gives the result.
\end{proof}

\subsection{Layer 2: Restarting for Better Complexity}
The following result analyzes the restart-based instantiations of the inner solver and the argument is the same as \cite{allen2018make} (see also \cite{hazan2014beyond}), invoked with slightly different parameters and constants.
\begin{lemma}\label{lem: level2}
For problem \eqref{eq: prob_innermost1}, let Assumption \ref{asp: inner} hold. When we run Algorithm \ref{alg:eg_sc} with budget $T\geq 32\Lhat_A/\mu_A$ for the number of oracles and initial point $\by_0$, we have
\begin{equation*}
\mathbb{E} \| \by_{N+M}^s - \bx^\star_A \|^2 \leq \left(\frac{1}{2}\right)^{\frac{T}{8\Lhat_A/\mu_A}} \|\by_0^s-\bx^\star_A\|^2 + \frac{192B_A^2\|\zinit-\bx^\star_A\|^2 +75G_A^2}{T\mu_A^2},
\end{equation*}
where $\Lhat_A = L_A+B_A$.
The total number of oracle calls is upper bounded by $T$.
\end{lemma}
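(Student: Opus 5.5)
The proof is a direct recursion over the restarts of \Cref{alg:eg_sc}, with \Cref{lem: eg} as the one-step contraction. Write $D_n := \mathbb{E}\|\by_n^s-\bx^\star_A\|^2$ and $C := 3\big(B_A^2\|\zinit-\bx^\star_A\|^2 + \tfrac{7}{18}G_A^2\big)$. Applying \Cref{lem: eg} to a call of \Cref{alg:eg_anchor} with step $\alpha$ and $K$ inner iterations (conditionally on $\by_{n-1}$, then taking total expectation) gives
\begin{equation*}
D_n \leq \frac{1}{2\alpha\mu_A K} D_{n-1} + \frac{\alpha}{\mu_A} C .
\end{equation*}
All step sizes used satisfy $\alpha\le \frac{1}{2\Lhat_A}$, so the lemma applies. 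In every call the product $\alpha K$ equals $\frac{1}{\mu_A}$: in phase one, $\alpha=\frac{1}{2\Lhat_A}$ and $K=\frac{2\Lhat_A}{\mu_A}$, and in phase two, $\alpha=\frac{1}{2^{n-N}\Lhat_A}$ and $K=\frac{2^{n-N+1}\Lhat_A}{\mu_A}$. Hence each call halves the distance term, $D_n\le \tfrac12 D_{n-1} + \tfrac{\alpha_n}{\mu_A}C$.

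\textbf{Phase one ($n\le N$).} Here $\alpha=\frac{1}{2\Lhat_A}$. Unrolling gives
\begin{equation*}
D_N \le 2^{-N}D_0 + \frac{C}{2\Lhat_A\mu_A}\sum_{j\ge0}2^{-j} \le 2^{-N}D_0 + \frac{C}{\Lhat_A\mu_A}.
\end{equation*}

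\textbf{Phase two ($n=N+m$, $m=1,\dots,M$).} Here $\alpha_m=\frac{1}{2^m\Lhat_A}$. Unrolling,
\begin{equation*}
D_{N+M}\le 2^{-M}D_N + \frac{C}{\Lhat_A\mu_A}\sum_{m=1}^M 2^{-(M-m)}2^{-m} = 2^{-M}D_N + \frac{M\,C}{2^{M}\Lhat_A\mu_A}.
\end{equation*}
This is too crude: the factor $M$ adds a logarithm that the target bound does not have. The fix is the Allen-Zhu/Hazan--Kale observation that the halving per stage is loose. Instead, bound each later stage by \Cref{lem: eg} with the stage's own variance level:
\begin{equation*}
D_{N+m}\le \tfrac12 D_{N+m-1} + \frac{C}{2^m\Lhat_A\mu_A}.
\end{equation*}
Then prove by induction that $D_{N+m}\le 2^{-m}\big(2^{-N}D_0\big) + \frac{c\,C}{2^m\Lhat_A\mu_A}$ for a suitable constant $c$.

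If the induction does not close with a constant $c$, I will check whether the $M$ factor can be absorbed. The stage-$m$ variance term is $2^{-m}$ while the carried term is halved, so the sum really is $M2^{-M}$. If that persists, I will use the slack in the constants instead: $2^M\ge \frac{T}{32\Lhat_A/\mu_A}$, and the loss appears as a bounded multiple via $M2^{-M}$ versus the $1/T$ target together with $T\ge 32\Lhat_A/\mu_A$. If needed, I will also rebalance the per-stage $K$, since the lemma's factor $\frac{1}{2\alpha\mu_AK}$ can be made $\frac14$ cheaply. I expect this bookkeeping to be the main obstacle, namely getting $\frac{75G_A^2}{T\mu_A^2}$ with no logarithm.

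\textbf{Combining.} Use $2^{-N}\le (1/2)^{T/(8\Lhat_A/\mu_A)}\cdot 2$, from $N=\lfloor\cdot\rfloor$, and $2^{-M}\le \frac{32\Lhat_A}{\mu_A T}$, from $M=\lfloor\log_2\frac{T}{16\Lhat_A/\mu_A}\rfloor$. Then $\frac{C}{2^M\Lhat_A\mu_A}\le \frac{32C}{T\mu_A^2}$, and the additional factor $2^{-M}\le 1$ multiplying the first term absorbs the factor $2$ from the floor. Plugging in $C$ yields constants of the form $\frac{cB_A^2\|\zinit-\bx^\star_A\|^2+c'G_A^2}{T\mu_A^2}$, which are then matched to $192$ and $75$.

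\textbf{Budget.} Each call uses $2K$ oracles. Phase one costs $N\cdot\frac{4\Lhat_A}{\mu_A}\le \frac{T}{2}$. Phase two costs $\sum_{m=1}^M\frac{2^{m+2}\Lhat_A}{\mu_A}\le\frac{2^{M+3}\Lhat_A}{\mu_A}\le\frac{T}{2}$ by the choice of $M$. The total is therefore at most $T$.
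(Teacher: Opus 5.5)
\textbf{There is a genuine gap}, caused by an arithmetic slip in phase two. With $\alpha=\frac{1}{2^{m}\Lhat_A}$ and $K=\frac{2^{m+1}\Lhat_A}{\mu_A}$ you get $\alpha K=\frac{2}{\mu_A}$, not $\frac{1}{\mu_A}$. So the contraction factor from \Cref{lem: eg} in every phase-two call is $\frac{1}{2\alpha\mu_A K}=\frac14$, not $\frac12$. Your own budget count already uses this $K$, so the proposal is internally inconsistent.

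Because of the slip, your main line produces the noise sum $\sum_{m=1}^{M}2^{-(M-m)}2^{-m}=M2^{-M}$, and none of your remedies removes it. The induction with a fixed constant $c$ does not close: under contraction $\frac12$ the hypothesis with $c$ at stage $m-1$ yields $c+1$ at stage $m$. Nor can slack in the constants absorb $M2^{-M}$, since with $2^{M}\approx \frac{T\mu_A}{16\Lhat_A}$ it is, up to constants, of order $\frac{\Lhat_A}{T\mu_A}\log\frac{T\mu_A}{\Lhat_A}$, a genuine logarithmic loss. The ``rebalance $K$'' you hold in reserve is not a modification at all; it is exactly what \Cref{alg:eg_sc} already does.

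With the correct factor, the argument closes at once, and this is the paper's proof. In your notation, $D_{N+m}\le\frac14D_{N+m-1}+\frac{C}{2^m\Lhat_A\mu_A}$. The phase-two noise is then $\frac{C}{\Lhat_A\mu_A}\sum_{m=1}^M4^{-(M-m)}2^{-m}=\frac{C}{\Lhat_A\mu_A}2^{-2M}(2^{M+1}-2)$. Adding the carried phase-one noise $4^{-M}\frac{C}{\Lhat_A\mu_A}(1-2^{-N})$ gives $D_{N+M}\le 2^{-N-2M}D_0+\frac{2C}{2^M\Lhat_A\mu_A}$.

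Using $2^M\ge\frac{T\mu_A}{32\Lhat_A}$, this becomes $\frac{64C}{T\mu_A^2}=\frac{192B_A^2\|\zinit-\bx^\star_A\|^2+\frac{224}{3}G_A^2}{T\mu_A^2}$, which is exactly where the constants $192$ and $75$ come from. For the first term, $N\ge\frac{T}{8\Lhat_A/\mu_A}-1$ and $M\ge1$ give $2^{-N-2M}\le(\frac12)^{T/(8\Lhat_A/\mu_A)}$. Your phase-one recursion and your budget count are correct as written.
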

\begin{proof}
For a lighter notation, we omit the superscript $s$ as in the algorithm and also suppress the subscript for $\mu_A, \widehat{L}_A, B_A, G_A$ throughout the proof. Let us use the notations
\begin{align*}
\Ghat^2 = B^2\|\zinit-\bx^\star_A\|^2 + \frac{7G^2}{18}, \text{~~~and~~~} \Lhat = L+B.
\end{align*}
We invoke \Cref{lem: eg} with $\alpha=\frac{1}{2\Lhat}$ and $K=\frac{2\Lhat}{\mu}$. Since $\alpha \mu K = 1$ and $\frac{3\alpha}{\mu} = \frac{3}{2\Lhat\mu}$, we have for each $n=1,\dots, N$ (where the initial point to \Cref{alg:eg_anchor} is $\by_{n-1}$):
\begin{align*}
\mathbb{E} \| \by_n - \bx^\star_A \|^2 &\leq \frac{1}{2} \E\| \by_{n-1}-\bx^\star_A\|^2 + \frac{3}{2\Lhat\mu}\Ghat^2 \\
&\leq \frac{1}{2^n} \|\by_0-\bx^\star_A\|^2 + \frac{3\Ghat^2}{2\Lhat\mu}\sum_{i=0}^{n-1} \frac{1}{2^i}.
\end{align*}
At $n=N$, this gives us
\begin{equation}\label{eq: ynbd}
\mathbb{E} \| \by_N - \bx^\star_A\|^2 \leq \frac{1}{2^N} \| \by_0-\bx^\star_A\|^2 + \frac{3\Ghat^2}{\Lhat \mu}(1-2^{-N}),
\end{equation}
where the last term is by summing the geometric series.

We continue to analyze the iterations of Algorithm \ref{alg:eg_sc} when $n=N+1$ to $N+M$. By the setting of $\alpha$ and $K$ in this case, we have $\alpha \mu K= 2$ and $\frac{3\alpha}{\mu} = \frac{3}{2^{n-N}\Lhat\mu}$. We invoke \Cref{lem: eg} at $n=N+M$ and then unroll until $n=N+1$ to obtain
\begin{align*}
\mathbb{E} \| \by_{N+M} - \bx^\star_A\|^2 &\leq \frac{1}{4} \mathbb{E} \| \by_{N+M-1}-\bx^\star_A\|^2 + \frac{3\Ghat^2}{2^{M}\Lhat\mu} \\
&\leq \frac{1}{4^M}\mathbb{E}\|\by_N-\bx^\star_A\|^2 + \frac{3\Ghat^2}{2^M\Lhat\mu}\sum_{i=0}^{M-1} \frac{1}{2^i} \\
&\leq \frac{1}{4^M}\mathbb{E}\|\by_N-\bx^\star_A\|^2 + \frac{3\Ghat^2}{2^{M-1}\Lhat\mu}(1-2^{-M}),
\end{align*}
where the last line is by summing the geometric series.
On the last inequality, we plug in the bound of $\mathbb{E}\| \by_N-\bx^\star_A\|^2$ from \eqref{eq: ynbd} to deduce
\begin{align*}
\mathbb{E} \| \by_{N+M} - \bx^\star_A\|^2 \leq \frac{1}{2^{N+2M}} \|\by_0-\bx^\star_A\|^2 + \frac{6\Ghat^2}{2^M\Lhat\mu}.
\end{align*}
Since $N\geq\frac{T}{8\Lhat/\mu}-1$, $2^M \geq  \frac{T}{32\Lhat/\mu}$, and $M\geq 1$, we have
\begin{align*}
\mathbb{E} \| \by_{N+M} - \bx^\star_A\|^2 \leq \left(\frac{1}{2}\right)^{\frac{T}{8\Lhat/\mu}} \|\by_0-\bx^\star_A\|^2 + \frac{192\Ghat^2}{T\mu^2}.
\end{align*}
Moreover, the total number of calls to $A_\xi$ is less than
\begin{equation*}
2\times \frac{2\Lhat}{\mu} \times N + \sum_{m=1}^M 2\times 2^{m+1}\times \frac{\Lhat}{\mu} \leq \frac{T}{2} +  2^M\times \frac{8\Lhat}{\mu}\leq T,
\end{equation*}
since $N\leq \frac{T}{8\Lhat/\mu}$ and $2^M \leq \frac{T}{16\Lhat/\mu}$.
\end{proof}

\section{Complexity Analysis with Bounded Variance}
We now analyze \Cref{alg:rec_reg}. This method calls \Cref{alg:eg_sc} repeatedly for $S$ times where each step approximates the problem in \eqref{eq: fsdef}.

For analyzing this scheme, we will invoke \Cref{lem: level2}, with $\bx_A^\star \leftarrow \bz^{\star}_{s-1}$ and $\by_0 \leftarrow \bz_{s-1}$. 
Since the last term on the right-hand side of the result of \Cref{lem: level2} has a term depending on $B^2\|\zinit - \bz_{s-1}^\star\|^2$, we need a dedicated analysis to control this term since such a term did not exist in prior analyses that required a uniformly bounded variance \citep{chen2024near, allen2018make}. For simplicity, we will first analyze the case $B=0$ which corresponds to bounded variance and then we will show how to handle the $B\neq 0$ case in the next section.

The two results below will be used for both monotone and strongly monotone cases. As a result, they only use that $F^\mu$ is $\mu$-strongly monotone which is true in both cases, due to the definition \eqref{eq: fdelta}.
\begin{lemma}\label{lem: iter_dif}
For problem \eqref{eq: mi}, let Assumption \ref{asp: 1} hold and suppose that \Cref{asp: 2} holds with $B=0$. We apply \Cref{alg:rec_reg} to solve \eqref{eq: delta_prob} with budget $T\geq 48\kappa S$ for number of oracles where $\kappa=\frac{L_F}{\mu}$, $S=\left\lfloor  \log_2\kappa \right\rfloor \geq 1$ and initial point $\zinit$. We have for all $s=1,\dots, S$ that
\begin{align*}
\mu_s^2 \mathbb{E} \| \bz_{s} - \bz_{s-1}^\star\|^2 \leq \left( \frac{1}{2} \right)^{\frac{Ts}{24\kappa S}} \mu^2 \|\zinit-\bz^\star_0\|^2 + \frac{400SG^2}{T},
\end{align*}
and the points $\bz_{s}^\star$ are as defined in \eqref{eq: fsdef}.
\end{lemma}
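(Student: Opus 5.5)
We argue by induction on $s$, following the template of \cite{allen2018make}: apply \Cref{lem: level2} to each subproblem and relate consecutive solutions $\bz_{s-1}^\star$ and $\bz_{s-2}^\star$.

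\textbf{Step 1: a one-step bound.} At stage $s$, \Cref{alg:rec_reg} calls \Cref{alg:eg_sc} on $A = F^{s-1}$. This operator is $\mu_A=(2^s-1)\mu$-strongly monotone and $(L_F+(2^s-1)\mu)$-Lipschitz. Since $B=0$, we have $B_A=0$ and $G_A=G$, because the added regularizers are deterministic. The budget is $T/S$ and the initial point is $\bz_{s-1}$. We need $\Lhat_A/\mu_A\le 2\kappa$ (recall $\mu_A\ge\mu$ and $(2^s-1)\mu\le L_F$ for $s\le S$), and $T/S\ge 48\kappa\ge 32\Lhat_A/\mu_A$ then holds up to constants to check. \Cref{lem: level2} gives
\begin{equation*}
\E\|\bz_s-\bz_{s-1}^\star\|^2\le \left(\tfrac12\right)^{\frac{T}{8S\Lhat_A/\mu_A}}\E\|\bz_{s-1}-\bz_{s-1}^\star\|^2+\frac{75G^2 S}{T\mu_A^2}.
\end{equation*}
Since $\mu_s=2^s\mu\le 2\mu_A$, multiplying by $\mu_s^2$ turns the noise term into $300SG^2/T$. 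The contraction exponent is at least $\frac{T}{8S}\cdot\frac{\mu_A}{\Lhat_A}$. Choosing the crude bound $\Lhat_A/\mu_A\le 3\kappa$ gives exponent at least $T/(24\kappa S)$, which explains the constant $24$.

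\textbf{Step 2: relate $\bz_{s-1}$ to $\bz_{s-1}^\star$.} This is the key step. By definition, $\bz_{s-1}^\star$ solves $0\in F^{s-2}(\bz)+\mu_{s-1}(\bz-\bz_{s-1})+\partial r(\bz)$, while $\bz_{s-2}^\star$ solves $0\in F^{s-2}(\bz)+\partial r(\bz)$. Write $\bz_{s-1}^\star = \bz$ and $\bz_{s-2}^\star = \bz'$. Subtracting the inclusions and using monotonicity of $F^{s-2}+\partial r$ gives
\begin{equation*}
\mu_{s-1}\langle \bz-\bz_{s-1},\bz-\bz'\rangle\le 0 .
\end{equation*}
Hence $\|\bz-\bz_{s-1}\|^2\le\|\bz'-\bz_{s-1}\|^2$, since $\bz$ is the resolvent of a monotone operator evaluated at $\bz_{s-1}$. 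This yields
\begin{equation*}
\mu_s^2\E\|\bz_{s-1}-\bz_{s-1}^\star\|^2\le 4\mu_{s-1}^2\E\|\bz_{s-1}-\bz_{s-2}^\star\|^2 .
\end{equation*}
The right-hand side is controlled by the induction hypothesis at $s-1$.

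\textbf{Step 3: close the induction.} Combining Steps 1 and 2,
\begin{equation*}
\mu_s^2\E\|\bz_s-\bz_{s-1}^\star\|^2\le 4\left(\tfrac12\right)^{\frac{T}{24\kappa S}}\Bigl[\left(\tfrac12\right)^{\frac{T(s-1)}{24\kappa S}}\mu^2\|\zinit-\bz_0^\star\|^2+\frac{400SG^2}{T}\Bigr]+\frac{300SG^2}{T}.
\end{equation*}

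The contraction must absorb the factor $4$, which is the main obstacle in the constant bookkeeping. Since $T/(24\kappa S)\ge 2$, we get $4(1/2)^{T/(24\kappa S)}\le 1$, but this only preserves the exponent in the first term, not improves it. To fix this, I would keep a sharper exponent from Step 1, namely $T/(16\kappa S)$ using $\Lhat_A/\mu_A\le 2\kappa$. Then $4(1/2)^{T/(16\kappa S)}\le(1/2)^{T/(24\kappa S)}$ exactly when $T/(48\kappa S)\ge 2$. If that is tight, I would slightly adjust which comparison constant is used.

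For the noise term, $4\cdot 2^{-3}\cdot 400+300\le 400$ requires a contraction factor of at least $1/16$, which comes from the same exponent bound. Finally, the base case $s=1$ uses $\bz_0=\zinit$ and $\mu_1=2\mu$ directly in Step 1.
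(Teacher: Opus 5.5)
Your route is the paper's own: apply \Cref{lem: level2} to $F^{s-1}$ with budget $T/S$, compare resolvents via $\|\bz_{s-1}-\bz_{s-1}^\star\|\le\|\bz_{s-1}-\bz_{s-2}^\star\|$ (this is \Cref{eq: nonexp1}), absorb the factor $4=\mu_s^2/\mu_{s-1}^2$ into the contraction, and close the recursion. The paper unrolls and sums a geometric series with ratio at most $1/4$, giving $400=\frac43\cdot 300$; this is equivalent to your induction.

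The one loose end is the constant you left open. It closes under exactly $T\ge 48\kappa S$ once you use the sharper ratio bound
$\Lhat_A/\mu_A = 1+\frac{L_F}{(2^s-1)\mu}\le 1+\kappa\le\frac32\kappa$.
This holds because $S\ge 1$ forces $\kappa\ge 2$.

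\begin{itemize}
\item The contraction exponent is then at least $T/(12\kappa S)\ge 4$.
\item This gives $4(\frac12)^{T/(12\kappa S)}\le(\frac12)^{T/(24\kappa S)}$, which is equivalent to $T\ge 48\kappa S$.
\item It also gives $4\cdot\frac1{16}\cdot 400+300=400$ for the noise term.
\item It verifies the hypothesis $T/S\ge 32\Lhat_A/\mu_A$ of \Cref{lem: level2}, which your bound $2\kappa$ does not.
\end{itemize}

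With $2\kappa$ (or $3\kappa$) you would need $T\ge 96\kappa S$. Your line $4\cdot 2^{-3}\cdot 400+300\le 400$ is false as written, since the left side equals $500$. Also, the $24$ in the statement does not come from a crude $3\kappa$ bound. It comes from halving the exponent $T/(12\kappa S)$ to pay for the factor $4$.
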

\begin{proof}
At iteration $s\geq1$, we will invoke \Cref{lem: level2} with 
\begin{equation}
\by_0^s\leftarrow \bz_{s-1},~~~ \bx^\star_A\leftarrow \bz_{s-1}^{\star},~~~ A\leftarrow F^{s-1},~~~ T\leftarrow T/S,
\end{equation}
where $F^0 = F^\mu$ as defined in \eqref{eq: fdelta}.

With these settings, we can verify that the requirements of \Cref{asp: inner} hold. Then, the structural constants in the setting of \Cref{lem: level2} become
\begin{equation}
\mu_A\leftarrow (2^{s}-1)\mu, ~~~ \widehat{L}_A \equiv L_A \leftarrow L_F+(2^s-1)\mu, ~~~\frac{\Lhat_A}{\mu_A} = \frac{L_F+(2^s-1)\mu}{(2^s-1)\mu} \leq \frac{3\kappa}{2}, ~~~B_A \leftarrow 0, ~~~ G_A \leftarrow G,
\end{equation}
where $\kappa = \frac{L_F}{\mu}\geq2$,
see \Cref{fact:struc} and $\mu$ is the strong monotonicity constant of $F^\mu$. The last two conclusions also follow from \Cref{fact:struc}. 

As a result, from \Cref{lem: level2}, we have for $s \geq 1$:
\begin{align}\label{eq: seq1}
\mathbb{E} \| \bz_s - \bz_{s-1}^\star \|^2 &\leq \left(\frac{1}{2} \right)^{\frac{T}{12S\kappa}} \mathbb{E} \| \bz_{s-1} - \bz_{s-1}^\star \|^2 + \frac{75S G^2}{T\mu_{s-1}^2},
\end{align}
since $\mu_{s-1} \leq \mu_A$.

Particularly, for $s=1$, we have (recalling $\mu_0 =\mu$)
\begin{equation}\label{eq: roll_s_1case}
\mu_1^2\mathbb{E} \| \bz_1-\bz_0^\star\|^2 \leq \left(\frac{1}{2}\right)^{\frac{T}{24S\kappa}} \mu^2\| \zinit - \bz_0^\star\|^2 + \frac{300SG^2}{T},
\end{equation}
where we used $4\left( \frac{1}{2} \right)^{\frac{T}{12\kappa S}} \leq \left( \frac{1}{2} \right)^{\frac{T}{24\kappa S}}$ due to $T \geq 48\kappa S$.

Using \Cref{eq: nonexp1} and $\mu_s^2 = 4\mu_{s-1}^2$, this implies that for $s\geq 2$:
\begin{align*}
\mu_s^2 \mathbb{E} \| \bz_s - \bz_{s-1}^\star \|^2 &\leq \left(\frac{1}{2} \right)^{\frac{T}{12S\kappa}} 4\mu_{s-1}^2\mathbb{E} \| \bz_{s-1} - \bz_{s-2}^\star \|^2 + \frac{300S G^2}{T} \\
&\leq \left(\frac{1}{2} \right)^{\frac{T}{24S\kappa}} \mu_{s-1}^2\mathbb{E} \| \bz_{s-1} - \bz_{s-2}^\star \|^2 + \frac{300SG^2}{T},
\end{align*}
where the last step used $T \geq 48S\kappa$ to get $\frac{T}{12S\kappa} -2\geq \frac{T}{24S\kappa}$.
By unrolling this inequality and summing the geometric series for the last term, we get for $s\geq 2$:
\begin{align*}
\mu_s^2\mathbb{E} \| \bz_s - \bz_{s-1}^\star \|^2 &\leq \left(\frac{1}{2} \right)^{\frac{(s-1)T}{24S\kappa}} \mu_1^2\mathbb{E} \| \bz_{1} - \bz_{0}^\star \|^2 +  \frac{300SG^2}{T}\sum_{i=0}^{s-2}\left(\frac{1}{2}\right)^{\frac{iT}{24\kappa S}} \\
&\leq \left(\frac{1}{2} \right)^{\frac{sT}{24S\kappa}}  \mu^2 \| \zinit-\bz_0^\star\|^2 +  \frac{300SG^2}{T}\sum_{i=0}^{s-1}\left(\frac{1}{2}\right)^{\frac{iT}{24\kappa S}},
\end{align*}
where the second inequality used \eqref{eq: roll_s_1case}. Calculating the sum of geometric series and using $T \geq 48\kappa S$ gives the result.
\end{proof}
By using \Cref{th: it_dif_to_gradmapmu}, we can now convert this result to a guarantee on the gradient mapping norm for the operator $H^\mu$ given in \eqref{eq: delta_prob}. In view of its definition, this will directly give us the required guarantee for the strongly monotone setting. In the monotone setting, we will select $\mu$ accordingly to get a guarantee for the original problem in \eqref{eq: mi}.
\begin{lemma}\label{th: hdelta}
For problem \eqref{eq: mi}, let Assumption \ref{asp: 1} hold and suppose that \Cref{asp: 2} holds with $B=0$. We apply \Cref{alg:rec_reg} to solve \eqref{eq: delta_prob} with $\eta= \frac{1}{3L_F}$, oracle budget $T \geq 48SL_F/\mu$ where $S=\left\lfloor \log_2\frac{L_F}{\mu} \right\rfloor \geq 1$ for the number of oracles and initial point $\zinit$. Then, we have that
\begin{align*}
\mathbb{E} \| \cG_{\eta, H^\mu}(\bz_S)\| = O\left( \left(\frac{1}{2}\right)^{\frac{T}{48SL_F/\mu}} \mu \| \zinit - \bz_0^\star\| + \frac{S^{3/2}G}{\sqrt{T}} \right),
\end{align*}
where $O$ only suppresses the absolute constants.
\end{lemma}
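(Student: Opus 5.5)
We reduce the claim to \Cref{lem: iter_dif} through a deterministic chain of estimates that links the gradient mapping of $H^\mu$ at $\bz_S$ to the distances $\|\bz_s-\bz_{s-1}^\star\|$. The paper refers to a \Cref{th: it_dif_to_gradmapmu}; we reconstruct its expected content.

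\textbf{Step 1: reduce to the last subproblem.} Write $H^{S-1}=F^{S-1}+\partial r$. Since $F^{S-1}(\bz)=F^\mu(\bz)+\sum_{i=1}^{S-1}\mu_i(\bz-\bz_i)$ and $\prox_{\eta r}$ is nonexpansive,
\begin{equation*}
\|\cG_{\eta,H^\mu}(\bz_S)\|\le \|\cG_{\eta,H^{S-1}}(\bz_S)\| + \Big\|\sum_{i=1}^{S-1}\mu_i(\bz_S-\bz_i)\Big\|.
\end{equation*}

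\textbf{Step 2: bound the last-subproblem residual.} The map $\cG_{\eta,H^{S-1}}$ vanishes at $\bz_{S-1}^\star$. The operator $F^{S-1}$ is $(L_F+(2^S-1)\mu)$-Lipschitz. With $S\le\log_2(L_F/\mu)$ this constant is at most $2L_F$, and since $\eta=1/(3L_F)$ we get $\eta\cdot\mathrm{Lip}\le 1$. Hence $\cG_{\eta,H^{S-1}}$ is $O(1/\eta)$-Lipschitz, which gives
\begin{equation*}
\|\cG_{\eta,H^{S-1}}(\bz_S)\|\le O(L_F)\,\|\bz_S-\bz_{S-1}^\star\| = O(\mu_S)\,\|\bz_S-\bz_{S-1}^\star\|,
\end{equation*}
because $\mu_S=2^S\mu\ge L_F/2$. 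This is where the choice of $S$ matters.

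\textbf{Step 3: bound the regularization terms.} Use the triangle inequality
\begin{equation*}
\|\bz_S-\bz_i\|\le \|\bz_S-\bz_{S-1}^\star\| + \sum_{j=i}^{S-1}\|\bz_j^\star-\bz_{j-1}^\star\| + \|\bz_i-\bz_{i-1}^\star\|.
\end{equation*}
Then apply the standard nonexpansiveness fact for strongly monotone regularized problems (the \Cref{eq: nonexp1}-type result used earlier), adapted to consecutive solutions: $\mu_j\|\bz_j^\star-\bz_{j-1}^\star\|\le\mu_j\|\bz_j-\bz_{j-1}^\star\|$. The solution of the $j$-th problem is the resolvent-type point of the $(j-1)$-st problem shifted by $\mu_j(\cdot-\bz_j)$. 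Strong monotonicity of $H^{j-1}$ with modulus at least $\mu_j$ (up to a factor $1/2$) then gives $\|\bz_j^\star-\bz_{j-1}^\star\|\le\|\bz_j-\bz_{j-1}^\star\|$.

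Combining, and using the geometric growth $\sum_{i\le j}\mu_i\le 2\mu_j$,
\begin{equation*}
\|\cG_{\eta,H^\mu}(\bz_S)\| = O\Big(\sum_{s=1}^S \mu_s\|\bz_s-\bz_{s-1}^\star\|\Big).
\end{equation*}
This is the step I expect to be the main obstacle. The bookkeeping of indices and the factor between $\mu_j$ and the actual strong monotonicity modulus $(2^j-1)\mu$ must be handled carefully so that the constant stays absolute.

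\textbf{Step 4: take expectations.} By Jensen, $\mathbb{E}\,\mu_s\|\bz_s-\bz_{s-1}^\star\|\le(\mu_s^2\mathbb{E}\|\bz_s-\bz_{s-1}^\star\|^2)^{1/2}$. Insert \Cref{lem: iter_dif} and use $\sqrt{a+b}\le\sqrt a+\sqrt b$ to get, for each $s$,
\begin{equation*}
\mathbb{E}\,\mu_s\|\bz_s-\bz_{s-1}^\star\| \le \Big(\tfrac12\Big)^{\frac{Ts}{48\kappa S}}\mu\|\zinit-\bz_0^\star\| + \frac{20\sqrt S\, G}{\sqrt T}.
\end{equation*}
Sum over $s$. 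The first terms form a geometric series; since $T\ge48\kappa S$, its ratio is at most $1/2$, so it is dominated by twice its first term. The second terms contribute $S\cdot\sqrt{S}\,G/\sqrt T=S^{3/2}G/\sqrt T$. This yields exactly the claimed bound.
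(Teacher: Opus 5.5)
Your proposal is correct and follows essentially the same route as the paper. The paper's proof invokes Lemma~\ref{th: it_dif_to_gradmapmu} with $C=400$, $\Ghat=G$, which combines your Steps 1--3 (packaged as Lemma~\ref{eq: grad_map_sym_bd}, giving $\|\cG_{\eta,H^\mu}(\bz_S)\|\le 2\sum_j\mu_j\|\bz_j-\bz_{j-1}^\star\|+9L_F\|\bz_S-\bz_{S-1}^\star\|$ together with $L_F\le 2\mu_S$) with your Step 4 (Jensen, then the geometric sum using $T\ge 48\kappa S$). The step you flagged as the main obstacle is harmless: $\|\bz_j^\star-\bz_{j-1}^\star\|\le\|\bz_j-\bz_{j-1}^\star\|$ only needs the $\mu_j$-strong monotonicity of $H^j=H^{j-1}+\mu_j(\cdot-\bz_j)$, or equivalently nonexpansiveness of the resolvent of the monotone $H^{j-1}$, so the modulus $(2^j-1)\mu$ of $H^{j-1}$ never enters.
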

\begin{proof}
In view of the result of \Cref{lem: iter_dif}, we invoke \Cref{th: it_dif_to_gradmapmu} with $C\leftarrow 400$ and $\Ghat \leftarrow G$ to obtain the claim.
\end{proof}

\subsection{Strongly Monotone Case}\label{sec: bdd_var_str_monot}
In the strongly monotone case, we will use the first case in the definition of $F^\mu$ in \eqref{eq: fdelta}. This reduces $H^\mu$ to the original problem and we have the following theorem as the direct corollary of \Cref{th: hdelta}.
\begin{theorem}\label{th: scsc}
For problem \eqref{eq: mi}, let Assumption \ref{asp: 1} hold and suppose that Assumption \ref{asp: 2} holds with $B=0$. Additionally assume that $F$ is $\mu_F$ strongly monotone for $\mu_F>0$. We apply \Cref{alg:rec_reg} to solve \eqref{eq: delta_prob} with $\eta=\frac{1}{3L_F}$, $S=\lfloor \log_2(L_F/\mu_F) \rfloor\geq 1$ and $T \geq 48SL_F/\mu_F$, and initial point $\zinit$.
Then, we have that 
\begin{align*}
\mathbb{E} \| \cG_{\eta, H}(\bz_S)\| = O\left( \left(\frac{1}{2}\right)^{\frac{T}{48SL_F/\mu_F}} \mu_FD_\star + \frac{S^{3/2}G}{\sqrt{T}} \right),
\end{align*}
where $O$ only suppresses the absolute constants.
\end{theorem}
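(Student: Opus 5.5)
This is a direct specialization of \Cref{th: hdelta}. Because $F$ is $\mu_F$-strongly monotone, we use the first case of \eqref{eq: fdelta}: $F^\mu = F$ with $\mu = \mu_F$. Then $H^\mu = F^\mu + \partial r = F + \partial r = H$, and the gradient mappings coincide, $\cG_{\eta, H^\mu} = \cG_{\eta, H}$. The parameter requirements also match: with $\mu = \mu_F$ we have $S = \lfloor \log_2(L_F/\mu)\rfloor \geq 1$, $T \geq 48 S L_F/\mu$ and $\eta = \frac{1}{3L_F}$, exactly as in \Cref{th: hdelta}. Invoking that lemma gives
\begin{equation*}
\mathbb{E}\|\cG_{\eta, H}(\bz_S)\| = O\left( \left(\tfrac{1}{2}\right)^{\frac{T}{48SL_F/\mu_F}} \mu_F \|\zinit - \bz_0^\star\| + \frac{S^{3/2}G}{\sqrt{T}}\right).
\end{equation*}

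It remains to replace $\|\zinit-\bz_0^\star\|$ by $D_\star = \|\zinit - \bx^\star\|$. Here $\bz_0^\star$ is the solution of \eqref{eq: delta_prob}, i.e., $0\in (F+\partial r)(\bz_0^\star)$. Since $F$ is strongly monotone and $\partial r$ is monotone, $F + \partial r$ is strongly monotone and has at most one zero. Hence $\bz_0^\star = \bx^\star$, with existence guaranteed by \Cref{asp: 1}, and $\|\zinit-\bz_0^\star\| = D_\star$.

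The only point needing care is checking that \Cref{th: hdelta} uses nothing beyond the $\mu$-strong monotonicity of $F^\mu$. In particular, it must not rely on the monotone-case structure $F(\bz) + \mu(\bz-\zinit)$. The paper states this explicitly before \Cref{lem: iter_dif}: those results only use that $F^\mu$ is $\mu$-strongly monotone. The structural constants from \Cref{fact:struc} for $F^{s-1} = F + \sum_i \mu_i(\cdot - \bz_i)$, namely strong monotonicity $(2^s-1)\mu_F$ and Lipschitz constant $L_F + (2^s-1)\mu_F$, hold identically in this case. So no real obstacle remains and the theorem follows.
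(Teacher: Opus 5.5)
Your proposal is correct and follows essentially the same route as the paper: set $F^\mu = F$ and $\mu=\mu_F$, identify $\bz_0^\star=\bx^\star$, invoke \Cref{th: hdelta}, and rewrite $\|\zinit-\bz_0^\star\|$ as $D_\star$. Your uniqueness argument, that $F+\partial r$ is strongly monotone and so has at most one zero, just makes explicit the identification $\bz_0^\star=\bx^\star$ that the paper asserts without comment.
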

\begin{proof}
Since $F$ is strongly monotone, in view of \eqref{eq: fdelta}, we have $F^\mu = F$, $\mu=\mu_F$, and $\bz_0^\star = \bx^\star$. Then, we invoke \Cref{th: hdelta} with these settings and use the definition of $D_\star$ from \eqref{eq:dstar} to derive the bound.
\end{proof}
The next corollary then immediately follows by finding the value of $T$ to make the right-hand side of \Cref{th: scsc} less than $\varepsilon$.
\begin{corollary}\label{cor: str_monot}
Under the same setup as \Cref{th: scsc}, we have
\begin{equation}
\mathbb{E}\|\cG_{\eta, H}(\bz_S)\|\leq\varepsilon,
\end{equation}
where the number of stochastic first-order oracles $T$ is upper bounded by
\begin{equation*}
O\left( \frac{L_F}{\mu_F} \log\left(\frac{L_F}{\mu_F}\right) \log\left( \frac{\mu_F D_\star}{\varepsilon}+e\right) + \frac{G^2 \log^3(L_F/\mu_F)}{\varepsilon^2}\right).
\end{equation*}
\end{corollary}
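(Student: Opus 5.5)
The plan is to read off the bound of \Cref{th: scsc} and choose $T$ so that each of its two terms is at most $\varepsilon/2$ (up to the suppressed absolute constant). Write $\kappa = L_F/\mu_F$ and $S=\lfloor\log_2\kappa\rfloor$, so that $S\le\log_2\kappa$. \Cref{th: scsc} gives, for any $T\ge 48S\kappa$,
\begin{equation*}
\mathbb{E}\|\cG_{\eta,H}(\bz_S)\| \le c\left( 2^{-\frac{T}{48S\kappa}}\mu_F D_\star + \frac{S^{3/2}G}{\sqrt{T}}\right)
\end{equation*}
for an absolute constant $c$. So it suffices to take $T$ as the maximum of three quantities, and a maximum is bounded by the sum.

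\textbf{First term (exponential).} We need $2^{-T/(48S\kappa)}\mu_F D_\star \le \varepsilon/(2c)$. This holds as soon as
\begin{equation*}
T \ge 48S\kappa\log_2\!\left(\frac{2c\mu_F D_\star}{\varepsilon}\right).
\end{equation*}
When $\mu_F D_\star\le\varepsilon/(2c)$, the condition is vacuous and only the requirement $T\ge48S\kappa$ remains. Replacing $\log_2(2c\mu_FD_\star/\varepsilon)$ by $O(\log(\mu_FD_\star/\varepsilon+e))$ covers both cases: the additive $e$ keeps the logarithm at least $1$, so it also absorbs the constraint $T\ge 48S\kappa$. With $S\le\log_2\kappa$, this gives the first summand $O\bigl(\kappa\log\kappa\,\log(\mu_F D_\star/\varepsilon+e)\bigr)$.

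\textbf{Second term (variance).} We need $S^{3/2}G/\sqrt T\le\varepsilon/(2c)$, i.e., $T\ge 4c^2S^3G^2/\varepsilon^2$. This gives $O(G^2\log^3(\kappa)/\varepsilon^2)$ after using $S\le\log_2\kappa$.

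The only mildly delicate point is that $T$ also enters through $S$ and through the budget condition $T\ge 48S\kappa$ of \Cref{th: scsc}. Here $S$ depends only on $\kappa$, not on $T$, so there is no circularity, and the $+e$ inside the logarithm handles the budget condition. Taking $T$ to be the sum of the two bounds, the number of oracles stays within $T$ by \Cref{lem: level2} applied at each of the $S$ stages with budget $T/S$, and the claimed complexity follows. There is no real obstacle; the work is bookkeeping of constants.
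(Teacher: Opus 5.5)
Your proposal is correct and follows the paper's own (one-line) argument. The paper likewise reads off the bound of Theorem~\ref{th: scsc} and chooses $T$ large enough to make each of its two terms a small fraction of $\varepsilon$, using $S\le\log_2(L_F/\mu_F)$. Your extra bookkeeping — the $+e$ inside the logarithm absorbing the requirement $T\ge 48SL_F/\mu_F$, and noting that $S$ does not depend on $T$ — is exactly the detail the paper leaves implicit.
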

Our algorithmic parameters do not depend on $\|\bz_0-\bx^\star\|$ or a global variance upper bound $\sigma^2$, which were required for running the algorithm in \citep[Theorem 4.1]{chen2024near}. The parameters in our algorithm only depend on the oracle budget $T$, $L_F$ and $\mu_F$, similar to the case of \cite{allen2018make} in the minimization case and the bound for the gradient mapping in \Cref{th: scsc} follows. Converting this to a complexity bound gives Corollary \ref{cor: str_monot}.
\subsection{Monotone Case}\label{subsec:monot_bddvar}
In view of the definitions in \eqref{eq: delta_prob} and \eqref{eq: fdelta}, we are in the second case of \eqref{eq: fdelta}. Let us recall that \Cref{th: hdelta} gives a bound for solving the perturbed problem, controlled by $\mu$. By controlling $\mu$ and the distance between gradient mapping at perturbed problem and the original problem, we will derive a guarantee for the gradient mapping for solving the original problem in \eqref{eq: mi}.
\begin{theorem}[Monotone $F$]\label{th: cc}
For problem \eqref{eq: mi}, let Assumption \ref{asp: 1} hold and suppose that \Cref{asp: 2} holds with $B=0$. We apply \Cref{alg:rec_reg} to solve \eqref{eq: delta_prob} with any $L_F/2\geq \mu>0$, $\eta=\frac{1}{3L_F}$, $S=\lfloor \log_2(L_F/\mu) \rfloor\geq 1$ and $T \geq 48SL_F/\mu$, and initial point $\zinit$.
Then, we have that 
\begin{equation*}
\mathbb{E} \| \cG_{\eta, H}(\bz_S) \| = O\left( \mu\|\bx^\star - \zinit\| + \left(\frac{1}{2}\right)^{\frac{T}{48SL_F/\mu}} \mu\|\zinit-\bx^\star\| + \frac{S^{3/2}G}{\sqrt{T}} \right).
\end{equation*}
\end{theorem}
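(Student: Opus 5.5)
We apply \Cref{th: hdelta} to the perturbed operator $H^\mu = F + \mu(\cdot - \zinit) + \partial r$, which is the second case of \eqref{eq: fdelta}. The remaining work is to compare the two gradient mappings and to bound $\|\zinit-\bz_0^\star\|$ by $\|\zinit-\bx^\star\|$. We take $F^\mu(\bz) = F(\bz)+\mu(\bz-\zinit)$. This is $\mu$-strongly monotone and $(L_F+\mu)$-Lipschitz. Since $\mu\le L_F/2$, the Lipschitz constant is at most $\tfrac32 L_F$, so the hypotheses of \Cref{th: hdelta} hold up to absolute constants in the choice of $\eta$ and the constant $L_F$. The relevant constants for the inner problems are already accounted for in \Cref{lem: iter_dif} through $\widehat L_A\le L_F+(2^s-1)\mu$. \Cref{th: hdelta} then gives
\begin{equation*}
\mathbb{E}\|\cG_{\eta,H^\mu}(\bz_S)\| = O\Big(\big(\tfrac12\big)^{\frac{T}{48SL_F/\mu}}\mu\|\zinit-\bz_0^\star\| + \tfrac{S^{3/2}G}{\sqrt T}\Big).
\end{equation*}

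\textbf{Step 1: comparing the gradient mappings.} By nonexpansiveness of $\prox_{\eta r}$,
\begin{equation*}
\|\cG_{\eta,H}(\bz)-\cG_{\eta,H^\mu}(\bz)\| = \tfrac1\eta\big\|\prox_{\eta r}(\bz-\eta F(\bz)-\eta\mu(\bz-\zinit)) - \prox_{\eta r}(\bz-\eta F(\bz))\big\| \le \mu\|\bz-\zinit\|.
\end{equation*}
Hence $\mathbb{E}\|\cG_{\eta,H}(\bz_S)\|\le \mathbb{E}\|\cG_{\eta,H^\mu}(\bz_S)\| + \mu\,\mathbb{E}\|\bz_S-\zinit\|$.

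\textbf{Step 2: bounding $\|\zinit-\bz_0^\star\|$.} We use that $\|\zinit-\bz_0^\star\|\le\|\zinit-\bx^\star\|$. To see this, note that $-F(\bz_0^\star)-\mu(\bz_0^\star-\zinit)\in\partial r(\bz_0^\star)$ and $-F(\bx^\star)\in\partial r(\bx^\star)$. Monotonicity of $\partial r$ and of $F$ gives $\mu\langle \bz_0^\star-\zinit,\bx^\star-\bz_0^\star\rangle\ge 0$. Expanding, $\|\zinit-\bx^\star\|^2\ge\|\zinit-\bz_0^\star\|^2+\|\bz_0^\star-\bx^\star\|^2$. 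This handles the first term, and also yields $\|\bz_0^\star-\zinit\|\le D_\star$.

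\textbf{Step 3 (main obstacle): controlling $\mu\,\mathbb{E}\|\bz_S-\zinit\|$.} We split
\begin{equation*}
\|\bz_S-\zinit\|\le\|\bz_S-\bz_0^\star\|+\|\bz_0^\star-\zinit\|.
\end{equation*}
The last term is at most $D_\star$ by Step 2, which produces the leading $\mu\|\bx^\star-\zinit\|$ term. For $\|\bz_S-\bz_0^\star\|$, I would reuse the machinery behind \Cref{th: it_dif_to_gradmapmu}. The standard recursive-regularization argument of \cite{allen2018make} bounds $\|\bz^\star_{s}-\bz^\star_{s-1}\|$ via nonexpansiveness-type estimates (\Cref{eq: nonexp1}): the solution of the $s$-th problem moves from the $(s-1)$-th by at most a multiple of $\frac{\mu_s}{\mu_0+\dots+\mu_s}\|\bz_s-\bz_{s-1}^\star\|$. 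Summing over $s$ and applying \Cref{lem: iter_dif} gives
\begin{equation*}
\mu\,\mathbb{E}\|\bz_S-\bz_0^\star\| \lesssim \sum_s \mu_s\,\mathbb{E}\|\bz_s-\bz^\star_{s-1}\| \lesssim S\Big(\big(\tfrac12\big)^{\frac{T}{48S\kappa}}\mu D_\star + \sqrt{\tfrac{SG^2}{T}}\Big),
\end{equation*}
using Jensen's inequality to pass from squared to unsquared expectations. This is absorbed in the terms already present, up to the stated $S^{3/2}$ factor. If the paper's \Cref{th: it_dif_to_gradmapmu} already bounds $\mu_S\|\bz_S-\bz^\star_{S-1}\|$ together with the chain of $\bz^\star_s$'s, I would simply cite its internal estimates instead. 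Combining Steps 1 through 3 gives the claim.
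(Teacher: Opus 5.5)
Your proposal is correct and is essentially the paper's own proof. Steps 1 to 3 reproduce \Cref{lem: reg_to_unreg}: nonexpansiveness of $\prox_{\eta r}$ to compare $\cG_{\eta,H}$ with $\cG_{\eta,H^\mu}$, the last claim of \Cref{eq: nonexp1} for $\|\zinit-\bz_0^\star\|\le D_\star$, and the chain $\|\bz_S-\bz_0^\star\|\le\sum_{j}\|\bz_j-\bz_{j-1}^\star\|$ with $\mu\le\mu_j/2$. Your black-box use of \Cref{th: hdelta} is the same as the paper plugging in \eqref{eq: sll5} and \eqref{eq: bdd_lastterm}.

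The one slip is in Step 3. If you bound $\sum_{s}(\tfrac12)^{Ts/(48\kappa S)}$ by $S$ times its first term, the $(\tfrac12)^{T/(48\kappa S)}\mu D_\star$ term carries a spurious factor $S$, and that factor is not absorbed into the stated bound. Sum the geometric series instead, as in \eqref{eq: sll5}, which gives the constant $2$. The sharper factor $\mu_s/(\mu_0+\dots+\mu_s)$ you quote is true but not needed.
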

\begin{proof}
Applying \Cref{lem: reg_to_unreg} yields the inequality
\begin{align}\label{eq: sso4}
\| \cG_{\eta, H}(\bz_S)\| \leq 2.5 \sum_{j=1}^{S} \mu_j \| \bz_j - \bz_{j-1}^\star\| + 9L_F \| \bz_S - \bz_{S-1}^\star\| + \mu \|\zinit-\bx^\star\|.
\end{align}
Since \Cref{lem: iter_dif} holds in this case, on \eqref{eq: sso4}, we use \eqref{eq: sll5} to bound the first term and \eqref{eq: bdd_lastterm} the second term on the right-hand side and use \eqref{eq:init_dist_sol} to obtain the claimed bound.
\end{proof}
\begin{corollary}
Under the same setup of \Cref{th: cc}, let 
$T\geq 2200$
 and 
$\mu= \frac{96L_F\log_2 T}{T}$. We apply \Cref{alg:rec_reg} to solve \eqref{eq: delta_prob} with  $\eta=\frac{1}{3L_F}$, $S=\lfloor \log_2(L_F/\mu) \rfloor$, and initial point $\zinit$. Then, we have that
\begin{equation}\label{eq: bdd_var_ratebd}
\mathbb{E}\|\cG_{\eta, H}(\bz_S)\|= O\left( \frac{L_FD_\star\log_2T}{T} + \frac{G(\log_2T)^{3/2}}{\sqrt{T}} \right).
\end{equation}
Consequently, to obtain
\begin{equation}
\mathbb{E}\|\cG_{\eta, H}(\bz_S)\|\leq\varepsilon,
\end{equation}
the required number of stochastic first-order oracles $T$ is upper bounded by
\begin{equation*}
O\left( 1+ \frac{L_F D_\star}{\varepsilon}\ln\left(\frac{L_FD_\star}{\varepsilon}+1\right) + \frac{G^2}{\varepsilon^2} \ln^3\left(\frac{G}{\varepsilon}+1\right)\right),
\end{equation*}
where we only suppress the absolute constants.
\end{corollary}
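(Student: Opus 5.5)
Our plan is to plug the stated choice of $\mu$ into \Cref{th: cc} and then invert the resulting rate. First we check that the hypotheses of \Cref{th: cc} hold.

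\textbf{Hypotheses.} With $\mu=\frac{96L_F\log_2T}{T}$ we have $L_F/\mu=\frac{T}{96\log_2T}$. For $T\geq 2200$ this ratio is at least $2$, so $\mu\leq L_F/2$ and $S=\lfloor\log_2(L_F/\mu)\rfloor\geq 1$. Moreover $S\leq\log_2(L_F/\mu)\leq\log_2T$, hence
\begin{equation*}
48SL_F/\mu\leq 48\log_2T\cdot\frac{T}{96\log_2T}=\frac{T}{2}\leq T .
\end{equation*}
So the budget condition of \Cref{th: cc} holds. The threshold $2200$ is only what guarantees $T/(96\log_2T)\geq2$; this is the one small numerical point to verify.

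\textbf{Rate.} The same computation gives $\frac{T}{48SL_F/\mu}\geq 2$. The exponential factor is therefore at most $1/4$, and the second term of \Cref{th: cc} is absorbed into the first. The first term is
\begin{equation*}
\mu\|\zinit-\bx^\star\|=\frac{96L_FD_\star\log_2T}{T}.
\end{equation*}
Using $S\leq\log_2T$, the last term is at most $G(\log_2T)^{3/2}/\sqrt{T}$. Together these give \eqref{eq: bdd_var_ratebd}.

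\textbf{Complexity.} It suffices to make each of the two terms at most $\varepsilon/2$ (up to constants). The first requires $T/\log T\gtrsim L_FD_\star/\varepsilon$, which holds for $T\asymp \frac{L_FD_\star}{\varepsilon}\ln(\frac{L_FD_\star}{\varepsilon}+1)$. The second requires $T/\log^3T\gtrsim G^2/\varepsilon^2$, which holds for $T\asymp\frac{G^2}{\varepsilon^2}\ln^3(\frac{G}{\varepsilon}+1)$.

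To justify these choices we use the standard inversion fact: if $T\geq c\,a\ln^p(a+1)$ with $c$ a large enough absolute constant, then $T/\ln^pT\geq a$, because $\ln T=O(\ln(a+1))$. Taking $T$ as the maximum of the two thresholds and $2200$ gives the stated bound, where the additive $1$ accounts for the constant $2200$.

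The main obstacle is this logarithm inversion; it is routine but needs care with constants. Everything else is direct substitution into \Cref{th: cc}.
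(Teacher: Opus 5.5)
Your proposal is correct and follows essentially the same route as the paper. It checks $\kappa=L_F/\mu=T/(96\log_2T)\geq 2$ and $48\kappa S\leq T/2$ via $S\leq\log_2T$, substitutes $\mu$ into the bound of \Cref{th: cc} (the paper uses the explicit-constant version coming from \Cref{lem: reg_to_unreg}), and inverts the logarithms, where the paper uses \Cref{lem:numer}(ii) instead of your generic inversion fact.

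The only imprecision is in justifying that inversion fact. The estimate $\ln T=O(\ln(a+1))$ holds at the threshold $T_0=c\,a\ln^p(a+1)$ (for $a$ bounded away from $0$), but not for all $T\geq T_0$. You should therefore also use that $T/\ln^pT$ is increasing for $T\geq e^p$, and note that small $a$ is covered by $T\geq 2200$.
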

\begin{proof}
In this case, we start from the result of \Cref{lem: reg_to_unreg}, after plugging in from \eqref{eq: sll5} and \eqref{eq: bdd_lastterm} to derive
\begin{align}\label{eq: vbn5}
\E\|\cG_{\eta, H}(\bz_S) \| \leq \mu D_\star + 5\left(\frac{1}{2}\right)^{\frac{T}{48\kappa S}}\mu D_\star + \frac{50S^{3/2}G}{\sqrt{T}} 
+ 18\bigg( \left( \frac{1}{2} \right)^{\frac{T}{48\kappa}} \mu D_\star + \frac{20\sqrt{S}G}{\sqrt{T}} \bigg),
\end{align}
where we also used \Cref{eq: nonexp1} which gives
$\| \zinit - \bz^\star_0 \| \leq \|\zinit - \bx^\star\|=D_\star$. Plugging in the definition of $\mu$ gives \eqref{eq: bdd_var_ratebd}. From this bound, it should be clear that this choice of $\mu$ will give a complexity result scaling as $\varepsilon^{-2}$ up to logarithmic terms. We now flesh out the explicit dependencies.

First, we verify that the requirement on $T$ in \Cref{th: cc} is satisfied. That is, we show that $T \geq 48\kappa S$. By the definitions of $\kappa = \frac{L_F}{\mu} = \frac{T}{96\log_2T }\leq T$ and $S=\left\lfloor \log_2\frac{L_F}{\mu} \right\rfloor \leq \log_2\frac{L_F}{\mu} \leq \log_2 T$, we have
\begin{align*}
48\kappa S = \frac{T}{2\log_2T}S \leq T.
\end{align*}
Moreover, by the definition of $\kappa$, we also have that $\kappa \geq 2$ since $T\geq 2200$.

Because of this requirement, for making the first, second and fourth terms in \eqref{eq: vbn5} less than $\varepsilon$, we need $\mu D_\star \leq \varepsilon/9$. This is true when
\begin{align*}
\frac{96L_FD_\star\log_2T}{T} \leq\frac{\varepsilon}{9},
\end{align*}
which is true for $T = \Theta\left( \frac{L_FD_\star}{\varepsilon}\ln\left(\frac{L_FD_\star}{\varepsilon}+1\right) \right)$ because of \Cref{lem:numer}\emph{(ii)} with $a=96L_FD_\star$, $b=1$, $c=1$. $\varepsilon'=\varepsilon/9$.

We finally calculate the order of $T$ to make the third and fifth terms on the right-hand side of \eqref{eq: vbn5} less than $\varepsilon$. Since the third term dominates the fifth up to absolute constants, we calculate $T$ to make
\begin{align*}
\frac{50S^{3/2}G}{\sqrt{T}} \leq \varepsilon,
\end{align*}
where $S = \left\lfloor \log_2\frac{L_F}{\mu} \right\rfloor \leq \log_2\frac{L_F}{\mu} \leq \log_2T$. As a result, a sufficient condition is
\begin{align*}
\frac{2500G^2(\log_2T)^{3}}{T} \leq \varepsilon^2,
\end{align*}
which is true for $T=\Theta\left( \frac{G^2}{\varepsilon^2}\ln^3\left(\frac{G}{\varepsilon}+1\right) \right)$ because of \Cref{lem:numer}\emph{(ii)} with $a=2500G^2$, $b=1$, $c=3$. $\varepsilon'=\varepsilon^2$.

Combining the two bounds for $T$ gives the assertion.
\end{proof}

\section{Complexity Analysis without Bounded Variance}\label{sec: unbdd_var}
For brevity, we will only analyze the monotone case, the strongly monotone case can be analyzed similarly by replacing the $\mu$ value by the strong monotonicity of the operator.

We now derive the result corresponding to \Cref{lem: iter_dif} when the bounded variance assumption is lifted.
Even for unconstrained problems, our result extends \citep[Theorem 4.1]{chen2024near} in that we do not require a globally upper bounded variance, but only \Cref{asp: 2}.
\begin{lemma}\label{lem: iter_dif-m}
For problem \eqref{eq: mi}, let Assumption \ref{asp: 1} and \Cref{asp: 2} hold. We apply \Cref{alg:rec_reg} to solve \eqref{eq: delta_prob} with budget $T\geq 14175$ for number of oracles, $\mu=\frac{32\Lhat_F}{\sqrt{T}}\log_2^{3/2}\frac{\sqrt{T}}{48}$ and initial point $\zinit$. We have for all $s=1,\dots, S$ that
\begin{align*}
\mu_s^2 \mathbb{E} \| \bz_{s} - \bz_{s-1}^\star\|^2 \leq \left( \frac{1}{2} \right)^{\frac{Ts}{24\kappa S}} \mu^2 \|\zinit-\bz^\star_0\|^2 + \frac{2048S^3\widehat{G}^2}{T},
\end{align*}
where $\widehat{G}^2=B^2\| \zinit-\bz_0^\star\|^2 + G^2$, $\kappa = \frac{\Lhat_F}{\mu} = \frac{L_F+B}{\mu}$ and $S=\left\lfloor \log_2 \kappa \right\rfloor$.
\end{lemma}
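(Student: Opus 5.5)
We follow the proof of \Cref{lem: iter_dif}, with one additional ingredient: a bound on the drifting term $B^2\|\zinit-\bz_{s-1}^\star\|^2$ produced by \Cref{lem: level2}.

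\textbf{Per-stage recursion.} At stage $s$ we invoke \Cref{lem: level2} with
\[
\by_0\leftarrow\bz_{s-1},\quad \bx_A^\star\leftarrow\bz_{s-1}^\star,\quad A\leftarrow F^{s-1},\quad T\leftarrow T/S .
\]
By \Cref{fact:struc}, the structural constants become
\[
\mu_A=(2^s-1)\mu,\quad \Lhat_A=L_F+B+(2^s-1)\mu,\quad B_A=B,\quad G_A=G .
\]
The added terms $\mu_i(\bz-\bz_i)$ are deterministic, so they do not change the noise. Since $\Lhat_A/\mu_A\le 3\kappa/2$ and $\mu_{s-1}\le\mu_A$, this gives
\[
\E\|\bz_s-\bz_{s-1}^\star\|^2\le \left(\tfrac12\right)^{\frac{T}{12S\kappa}}\E\|\bz_{s-1}-\bz_{s-1}^\star\|^2+\frac{S\,(192B^2\,\E\|\zinit-\bz_{s-1}^\star\|^2+75G^2)}{T\mu_{s-1}^2}.
\]
Here $\bz_{s-1}^\star$ is random, because it depends on $\bz_1,\dots,\bz_{s-1}$. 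Conditioning on the history before stage $s$ keeps the lemma valid, and we then take total expectations.

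\textbf{Bounding the drifting term.} This is the main obstacle: there is no global bound on $\|\zinit-\bz_{s-1}^\star\|$. We split it by the triangle inequality:
\[
\|\zinit-\bz_{s-1}^\star\|\le\|\zinit-\bz_0^\star\|+\sum_{j=1}^{s-1}\|\bz_j^\star-\bz_{j-1}^\star\|.
\]
The nonexpansiveness fact \Cref{eq: nonexp1} (resolvent-type stability of the regularized solutions) should give $\|\bz_j^\star-\bz_{j-1}^\star\|\le\|\bz_j-\bz_{j-1}^\star\|$. Cauchy--Schwarz then yields
\[
\|\zinit-\bz_{s-1}^\star\|^2\le s\Big(\|\zinit-\bz_0^\star\|^2+\sum_{j<s}\|\bz_j-\bz_{j-1}^\star\|^2\Big).
\]
The quantities $\|\bz_j-\bz_{j-1}^\star\|^2$ are exactly those being controlled in the induction, so we induct on the hypothesis
\[
\mu_j^2\,\E\|\bz_j-\bz_{j-1}^\star\|^2\le \left(\tfrac12\right)^{\frac{Tj}{24\kappa S}}\mu^2\|\zinit-\bz_0^\star\|^2+\frac{2048S^3\Ghat^2}{T}\qquad\text{for } j<s .
\]
Since $\mu_j\ge\mu$, each earlier term satisfies
\[
\E\|\bz_j-\bz_{j-1}^\star\|^2\le \|\zinit-\bz_0^\star\|^2+\frac{2048S^3\Ghat^2}{T\mu^2}.
\]
Multiplying by $B^2$, the $B^2\|\zinit-\bz_0^\star\|^2$ pieces go into $\Ghat^2$. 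The feedback piece has size roughly
\[
\frac{S\cdot s^2\cdot 192B^2}{T\mu^2}\cdot\frac{2048S^3\Ghat^2}{T}.
\]
The specific choice $\mu=\frac{32\Lhat_F}{\sqrt T}\log_2^{3/2}\frac{\sqrt T}{48}$ makes $\frac{S^3B^2}{T\mu^2}\le\frac{S^3}{1024\,\log_2^3(\sqrt T/48)}$, which is a small constant once $S\le\log_2\kappa\le\log_2(\sqrt T/32)$ is used. This is what lets the induction close with constant $2048$; we will check it, together with $T\ge 48\kappa S$ and $\kappa\ge2$, using $T\ge14175$.

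\textbf{Unrolling.} Multiplying by $\mu_s^2=4\mu_{s-1}^2$ and absorbing the factor $4$ via $\frac{T}{12S\kappa}-2\ge\frac{T}{24S\kappa}$ gives
\[
\mu_s^2\,\E\|\bz_s-\bz_{s-1}^\star\|^2\le \left(\tfrac12\right)^{\frac{T}{24S\kappa}}\mu_{s-1}^2\,\E\|\bz_{s-1}-\bz_{s-2}^\star\|^2+\frac{c\,S^3\Ghat^2}{T}.
\]
The $s=1$ case is handled exactly as in \eqref{eq: roll_s_1case}. Unrolling and summing the geometric series, whose ratio is at most $1/2$, gives the stated bound. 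The only delicate bookkeeping is making the additive constant reproduce $2048$ under the self-referential bound.
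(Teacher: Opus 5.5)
Your setup matches the paper's. That includes the per-stage recursion from \Cref{lem: level2} with $T\leftarrow T/S$, your conditioning remark, and the drift bound $\|\zinit-\bz_{s-1}^\star\|^2\le s\bigl(\|\zinit-\bz_0^\star\|^2+\sum_{j<s}\|\bz_j-\bz_{j-1}^\star\|^2\bigr)$ via \Cref{eq: nonexp1}. The gap is the step you defer with ``we will check it''. That step is the whole content of the lemma, and the estimate you sketch for it does not close.

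Your size estimate for the feedback piece drops the factor $\mu_s^2/\mu_{s-1}^2=4$. If every earlier term is bounded using only $\mu_j\ge\mu$, the feedback contribution to $\mu_s^2\E\|\bz_s-\bz_{s-1}^\star\|^2$ is $\frac{768\,S\,s(s-1)B^2}{T\mu^2}\cdot\frac{2048S^3\Ghat^2}{T}$. The condition $\mu^2T\ge1024S^3\Lhat_F^2$ (equivalent to $S\le\log_2(\sqrt T/48)$; your $S\le\log_2(\sqrt T/32)$ is too weak for this) bounds that coefficient only by $\frac34(1-\frac1S)$. Write $D=\|\zinit-\bz_0^\star\|$ and $\rho=(1/2)^{T/(24\kappa S)}\le\frac14$. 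Add the contracted term $\rho\cdot2048S^3\Ghat^2/T\le512S^3\Ghat^2/T$ and the non-feedback part $\frac{768Ss^2B^2D^2+300SG^2}{T}\le\frac{768S^3\Ghat^2}{T}$. The total is up to $(2816-1536/S)S^3\Ghat^2/T$, which exceeds $2048S^3\Ghat^2/T$ once $S\ge3$. With these estimates, the self-referential induction closes only if $2048$ is replaced by a constant of order $1024S$.

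Either of two repairs works.

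\emph{Keep the decay.} Retain $\mu_j^2=4^j\mu^2$ in the hypothesis. Then $\sum_{j<s}\E\|\bz_j-\bz_{j-1}^\star\|^2\le\frac{D^2}{15}+\frac{2048S^3\Ghat^2}{3T\mu^2}$, so $B^2\E\|\zinit-\bz_{s-1}^\star\|^2\le\frac{26}{15}s\Ghat^2$. The drift then contributes at most $1632S^2\Ghat^2/T$, and $512S^3+1632S^2\le2048S^3$ for $s\ge2$ (hence $S\ge2$). The case $s=1$ gives $768S\Ghat^2/T$ directly.

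\emph{Follow the paper and remove the self-reference.} First prove, by a separate induction on the raw recursion, the target-independent bound $\E\|\bz_j-\bz_{j-1}^\star\|^2\le D^2+G^2/\Lhat_F^2$. The coefficients $\frac14$ on the previous term, $\frac1{4S}$ on each drift term, and the additive $\frac{G^2}{4\Lhat_F^2}$ follow from $T\ge48\kappa S$ and $\mu^2T\ge1024S^3\Lhat_F^2$. They give total weight at most $\frac12$, so this induction has slack. Plugging the bound into the drift gives $B^2\E\|\zinit-\bz_{s-1}^\star\|^2+G^2\le2S^2\Ghat^2$. The recursion becomes linear with additive term $384S^3\Ghat^2/(T\mu_{s-1}^2)$. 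It unrolls exactly as in \Cref{lem: iter_dif}, with ratio at most $\frac14$ rather than your $\frac12$, giving $384\cdot4\cdot\frac43=2048$.
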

\begin{proof}
At iteration $s\geq1$, we will invoke \Cref{lem: level2} with 
\begin{equation}
\by_0\leftarrow \bz_{s-1},~~~ \bx^\star_A\leftarrow \bz_{s-1}^{\star},~~~ A\leftarrow F^{s-1},~~~ T\leftarrow T/S,
\end{equation}
With these settings, we can verify that the requirements of \Cref{asp: inner} hold. Then, we will get the structural constants in the setting of \Cref{lem: level2} becoming
\begin{equation}
\mu_A\leftarrow (2^{s}-1)\mu_, ~~~ L_A \leftarrow L_{s-1} = L_F+(2^s-1)\mu, ~~~B_A \leftarrow B, ~~~ G_A \leftarrow G,
\end{equation}
where we use \Cref{fact:struc}.
This also tells us that $\frac{\Lhat_A}{\mu_A} = \frac{L_A+B_A}{\mu_A} \leq \frac{L_F+(2^s-1)\mu+B}{(2^s-1)\mu} \leq 1+\frac{L_F+B}{\mu} \leq \frac{3\kappa}{2}$ where $\kappa = \frac{L_F+B}{\mu}\geq2$ by Lemma \ref{lem:numer}(iii).

As a result, from \Cref{lem: level2}, we have for $s \geq 1$:
\begin{align}\label{eq: seq1_}
\mathbb{E} \| \bz_s - \bz_{s-1}^\star \|^2 &\leq \left(\frac{1}{2} \right)^{\frac{T}{12S\kappa}} \mathbb{E} \| \bz_{s-1} - \bz_{s-1}^\star \|^2 + \frac{192S\E(B^2\|\zinit-\bz^\star_{s-1}\|^2 +G^2)}{T\mu_{s-1}^2},
\end{align}
since $\mu_{s-1} \leq \mu_A$.
Here, we also majorized the constant in the last term for simplicity.

We next need to show that the term involving $\E\|\zinit-\bz_{s-1}^\star\|^2$ is not too large to deteriorate the bound. In particular, we use triangle inequality and \Cref{eq: nonexp1} to get
\begin{align*}
\| \zinit-\bz_{s-1}^\star\| &\leq \| \zinit - \bz_0^\star\| + \sum_{j=1}^{s-1} \| \bz_j^\star - \bz_{j-1}^\star\|\leq \| \zinit - \bz_0^\star\| +\sum_{j=1}^{s-1} \| \bz_j - \bz_{j-1}^\star\|.
\end{align*}
After taking the square of this estimate, we consequently have
\begin{align}\label{eq: sfj4}
\| \zinit-\bz_{s-1}^\star\|^2 \leq S\| \zinit - \bz_0^\star\|^2 + S\sum_{j=1}^{s-1} \| \bz_j-\bz_{j-1}^\star\|^2,
\end{align}
since $s\leq S$.

Using this bound and \Cref{eq: nonexp1} on \eqref{eq: seq1_} yields for $s\geq 2$,
\begin{align}\label{eq: seq1.5_}
\mathbb{E} \| \bz_s - \bz_{s-1}^\star \|^2 &\leq \left(\frac{1}{2} \right)^{\frac{T}{12S\kappa}} \mathbb{E} \| \bz_{s-1} - \bz_{s-2}^\star \|^2 + \frac{192S^2B^2(\|\zinit-\bz_0^\star\|^2 + \sum_{j=1}^{s-1}\E\|\bz_j-\bz_{j-1}^\star\|^2)}{T\mu_{s-1}^2} + \frac{192SG^2}{T\mu_{s-1}^2}.
\end{align}
Due to \Cref{lem:numer}\emph{(iii)}, the choice of $T$ and $\mu$ implies that 
\begin{equation*}
T \geq 48\kappa S, \qquad \mu^2 \geq \frac{1024S^3\Lhat_F^2}{T}.
\end{equation*}
As a result, we notice for the coefficients in \eqref{eq: seq1.5_} that
\begin{equation}\label{eq: numer_for_induction}
\left(\frac{1}{2} \right)^{\frac{T}{12S\kappa}} \leq \frac14\text{,~~~} \frac{192S^2B^2}{T\mu_{s-1}^2}\leq \frac{256S^2B^2}{T\mu^2} \leq \frac{1}{4S}, \text{~~~} \frac{192SG^2}{T\mu_{s-1}^2} \leq \frac{G^2}{4(L_F+B)^2S^2} \leq \frac{G^2}{4(L_F+B)^2},
\end{equation}
where we also used $\mu_{s-1} \geq \mu$ and $S\geq 1$.

We will next use induction. With these estimates, the inequality \eqref{eq: seq1.5_} becomes for $s\geq 2$
\begin{align}\label{eq: seq4}
\mathbb{E} \| \bz_s - \bz_{s-1}^\star \|^2 &\leq \frac{1}{4} \mathbb{E} \| \bz_{s-1} - \bz_{s-2}^\star \|^2 + \frac{\|\zinit - \bz_0^\star\|^2 + \sum_{j=1}^{s-1}\E \| \bz_j-\bz_{j-1}^\star\|^2}{4S} + \frac{G^2}{4(L_F+B)^2}.
\end{align}
For $s=1$, from \eqref{eq: seq1_}, with $\bz_0=\zinit$ and $\bz_{s-1}^\star = \bz_0^\star$, we have
\begin{align*}
\E \| \bz_1-\bz_0^\star\|^2 \leq \left( \frac{1}{2}\right)^{\frac{T}{12S\kappa}} \| \zinit - \bz_0^\star\|^2 + \frac{192S(B^2 \| \zinit-\bz_0^\star\|^2 + G^2)}{T\mu_0^2}.
\end{align*}
Applying \eqref{eq: sfj4} and \eqref{eq: numer_for_induction} gives
\begin{align*}
\E\|\bz_1-\bz_0^\star\|^2 \leq \frac{1}{4}\|\zinit-\bz_0^\star\|^2 + \frac{\|\zinit-\bz_0^\star\|^2}{4S} + \frac{G^2}{4(L_F+B)^2}.
\end{align*}
We will prove $\E\|\bz_j - \bz_{j-1}^\star\|^2 \leq \| \zinit-\bz_0^\star\|^2 + \frac{G^2}{(L_F+B)^2}$ for $j\geq 1$. By the last display equation, the claim holds at $j=1$. Assume that it holds for $j\leq s-1$ with $s\geq 2$. Then, \eqref{eq: seq4} gives
\begin{align}\label{eq: seq5}
\mathbb{E} \| \bz_s - \bz_{s-1}^\star \|^2 
&\leq \frac{1}{2} \left(\| \zinit-\bz_0^\star\|^2 + \frac{G^2}{(L_F+B)^2}\right) + \frac{G^2}{4(L_F+B)^2} \notag \\
&\leq \| \zinit-\bz_0^\star\|^2 + \frac{G^2}{(L_F+B)^2}.
\end{align}
This completes the induction to derive the global upper bound on $\mathbb{E} \| \bz_s-\bz_{s-1}^\star\|^2$.

Then, using \eqref{eq: sfj4} and taking expectation, we have
\begin{align*}
\E\| \zinit-\bz_{s-1}^\star\|^2 &\leq S\| \zinit - \bz_0^\star\|^2 + S\sum_{j=1}^{s-1}\left( \| \zinit-\bz_0^\star\|^2 + \frac{G^2}{(L_F+B)^2}\right)\\
&\leq 2S^2 \| \zinit-\bz_0^\star\|^2 + \frac{S^2 G^2}{(L_F+B)^2}.
\end{align*}
We now plug this bound to \eqref{eq: seq1_} which gives us the bound
\begin{align}\label{eq: seq2_}
\mathbb{E} \| \bz_s - \bz_{s-1}^\star \|^2 &\leq \left(\frac{1}{2} \right)^{\frac{T}{12S\kappa}} \mathbb{E} \| \bz_{s-1} - \bz_{s-1}^\star \|^2 + \frac{192S(B^2(2S^2 \| \zinit-\bz_0^\star\|^2 + \frac{S^2 G^2}{(L_F+B)^2}) +G^2)}{T\mu_{s-1}^2} \\
&\leq \left(\frac{1}{2} \right)^{\frac{T}{12S\kappa}} \mathbb{E} \| \bz_{s-1} - \bz_{s-1}^\star \|^2 + \frac{384S^3(B^2\| \zinit-\bz_0^\star\|^2 + G^2)}{T\mu_{s-1}^2}
\end{align}
We follow the exact same steps that followed \eqref{eq: seq1} (where the difference is having $384S^3$ instead of $75S$ and $B^2\|\zinit-\bz_0^\star\|^2+G^2$ instead of $G^2$ on the last term) to get the assertion.
\end{proof}
Let us remark that the induction argument in this proof to bound $\E\|\zinit-\bz_{s-1}^\star\|^2$ can be strengthened with a more involved analysis to improve the order of the $S$ term in the final bound. However we chose the current argument for simplicity, since this order only affects the order of the logarithmic terms in the final bound in the sequel.

The next result corresponds to \Cref{th: hdelta} and \Cref{th: cc} without the bounded variance assumption.
\begin{theorem}\label{th: unbdd_monot}
For problem \eqref{eq: mi}, let Assumptions \ref{asp: 1} and \ref{asp: 2} hold. We apply \Cref{alg:rec_reg} to solve \eqref{eq: delta_prob} with 
\begin{equation*}
\mu = \frac{32\Lhat_F}{\sqrt{T}}\log_2^{3/2}\left( \frac{\sqrt{T}}{48} \right),~~~\eta=\frac{1}{3\Lhat_F},~~~S=\lfloor \log_2(\Lhat_F/\mu) \rfloor,~~~T \geq 14175,
\end{equation*}
and initial point $\zinit$. Then, we have that 
\begin{equation*}
\mathbb{E} \| \cG_{\eta, H}(\bz_S) \| = O\left( \mu D_\star + \left(\frac{1}{2}\right)^{\frac{T}{48S\Lhat_F/\mu}} \mu D_\star + \frac{S^{5/2}\left(BD_\star+G\right)}{\sqrt{T}} \right).
\end{equation*}
\end{theorem}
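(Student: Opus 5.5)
The proof will mirror the one of \Cref{th: cc}, with \Cref{lem: iter_dif-m} in place of \Cref{lem: iter_dif}. The starting point is \Cref{lem: reg_to_unreg} (used in the proof of \Cref{th: cc}), now applied with $\eta=\frac{1}{3\Lhat_F}$ and Lipschitz constant $\Lhat_F\ge L_F$:
\begin{equation*}
\| \cG_{\eta, H}(\bz_S)\| \leq 2.5 \sum_{j=1}^{S} \mu_j \| \bz_j - \bz_{j-1}^\star\| + 9\Lhat_F \| \bz_S - \bz_{S-1}^\star\| + \mu \|\zinit-\bx^\star\|.
\end{equation*}
The last term is already $\mu D_\star$. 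For the first two terms I take expectations and use Jensen, $\E\|\cdot\|\le(\E\|\cdot\|^2)^{1/2}$, together with \Cref{lem: iter_dif-m}:
\begin{equation*}
\mu_j\E\|\bz_j-\bz_{j-1}^\star\| \le \left(\tfrac12\right)^{\frac{Tj}{48\kappa S}}\mu\|\zinit-\bz_0^\star\| + \frac{\sqrt{2048}\,S^{3/2}\widehat G}{\sqrt T}.
\end{equation*}
I then bound $\|\zinit-\bz_0^\star\|\le D_\star$ by \Cref{eq: nonexp1}, and $\widehat G\le BD_\star+G$ by $\sqrt{a+b}\le\sqrt a+\sqrt b$.

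Summing over $j=1,\dots,S$, the geometric parts add to $O\big((\tfrac12)^{T/(48\kappa S)}\mu D_\star\big)$, since the ratio is at most $\tfrac12$ because $T\ge48\kappa S$ (guaranteed by \Cref{lem:numer}(iii)). The noise parts add to $O\big(S^{5/2}(BD_\star+G)/\sqrt T\big)$; this is the source of the $S^{5/2}$ exponent.

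The term $9\Lhat_F\E\|\bz_S-\bz_{S-1}^\star\|$ needs the most care, because its coefficient is $\Lhat_F$ rather than $\mu_S$. Here I use $\mu_S=2^S\mu\ge\Lhat_F/2$, which follows from $S=\lfloor\log_2(\Lhat_F/\mu)\rfloor$. Hence $\Lhat_F\|\bz_S-\bz_{S-1}^\star\|\le2\mu_S\|\bz_S-\bz_{S-1}^\star\|$, and applying the same $j=S$ bound gives
\begin{equation*}
O\left(\left(\tfrac12\right)^{\frac{T}{48\kappa}}\mu D_\star+\frac{S^{3/2}(BD_\star+G)}{\sqrt T}\right),
\end{equation*}
which is dominated by the previous terms. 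Collecting all three pieces gives the claimed bound.

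The main obstacle is checking that the hypotheses of \Cref{lem: iter_dif-m} and \Cref{lem: reg_to_unreg} hold under the stated parameters, namely $\kappa\ge2$, $S\ge1$, $T\ge48\kappa S$ and $\mu\le\Lhat_F/2$. I expect these to follow from \Cref{lem:numer}(iii) together with $T\ge14175$. A second point to verify is that \Cref{lem: reg_to_unreg} remains valid with $\Lhat_F$ in place of $L_F$, including the choice $\eta=1/(3\Lhat_F)$; this should hold since $\Lhat_F\ge L_F$ is still a Lipschitz constant of $F$.
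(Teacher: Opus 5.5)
Your proposal is correct and follows the paper's proof essentially step for step. The paper also starts from \Cref{lem: reg_to_unreg} and feeds \Cref{lem: iter_dif-m} (with $C=2048S^2$, $\widehat{G}^2=B^2\|\zinit-\bz_0^\star\|^2+G^2$) through Jensen, the geometric-sum bound and the $\Lhat_F\le 2\mu_S$ estimate, which it packages as \eqref{eq: sll5} and \eqref{eq: bdd_lastterm} in \Cref{th: it_dif_to_gradmapmu}; it finishes with $\|\zinit-\bz_0^\star\|\le D_\star$ from \Cref{eq: nonexp1}, and the side conditions you flag ($\kappa\ge2$, $S\ge1$, $T\ge48\kappa S$) are exactly those supplied by \Cref{lem:numer}(iii).
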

\begin{remark}
By the value of $\mu=\widetilde{O}\left(\frac{1}{\sqrt{T}} \right)$ and $T \geq 48S\Lhat_F/\mu$ as shown in Lemma \ref{lem:numer}, one can estimate that the complexity will be of the order $\varepsilon^{-2}$ up to logarithmic terms. We follow by a corollary for the order of the terms and then we give the precise constants.
\end{remark}
\begin{proof}
The result of \Cref{lem: iter_dif-m} shows that the hypothesis of \Cref{th: it_dif_to_gradmapmu} is satisfied with
\begin{equation}\label{eq: ghat_def}
C=2048S^2 \text{~~~and~~~} \widehat{G}^2=B^2 \| \zinit-\bz_0^\star\|^2  +G^2.
\end{equation}
We also know that applying \Cref{lem: reg_to_unreg} yields the inequality
\begin{align}\label{eq: 512}
\| \cG_{\eta, H}(\bz_S)\| \leq 2.5 \sum_{j=1}^{S} \mu_j \| \bz_j - \bz_{j-1}^\star\| + 9\Lhat_F \| \bz_S - \bz_{S-1}^\star\| + \mu \|\zinit-\bx^\star\|.
\end{align}
Using \eqref{eq: sll5} and \eqref{eq: bdd_lastterm} and \Cref{eq: nonexp1} for $\|\zinit - \bz^\star_0\|\leq \| \zinit - \bx^\star\|=D_\star$ to estimate the right-hand side of \eqref{eq: 512} gives the claimed bound.
\end{proof}
\begin{corollary}
Under the same setup of \Cref{th: unbdd_monot}, with $\mu = \frac{32\Lhat_F}{\sqrt{T}}\log_2^{3/2}\left( \frac{\sqrt{T}}{48} \right)$, $T\geq 14175$, we have that 
\begin{equation}\label{eq: unbdd_ratebd}
\mathbb{E}\|\cG_{\eta, H}(\bz_S)\|= O\left( \frac{\Lhat_FD_\star (\ln \sqrt{T})^{3/2}}{\sqrt{T}} + \frac{(\ln\sqrt{T})^{5/2}(BD_\star+G)}{\sqrt{T}} \right).
\end{equation}
Consequently, to obtain
\begin{equation}
\mathbb{E}\|\cG_{\eta, H}(\bz_S)\|\leq\varepsilon,
\end{equation}
the number of required stochastic first-order oracles $T$ is upper bounded by
\begin{equation*}
O\left( 1+\frac{(L_F^2+B^2)D_\star^2 + G^2}{\varepsilon^2}\ln^5\left(\frac{D_\star(L_F+B)+G}{\varepsilon} +1\right)\right),
\end{equation*}
where we only suppress the absolute constants.
\end{corollary}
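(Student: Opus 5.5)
The plan is to read off both claims from \Cref{th: unbdd_monot} by substituting the prescribed $\mu$ and then inverting the resulting rate, in parallel with the bounded-variance corollary above. \Cref{th: unbdd_monot} gives
\[
\E\|\cG_{\eta,H}(\bz_S)\| = O\Big(\mu D_\star + \big(\tfrac12\big)^{\frac{T}{48S\Lhat_F/\mu}}\mu D_\star + \tfrac{S^{5/2}(BD_\star+G)}{\sqrt T}\Big).
\]

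For the rate \eqref{eq: unbdd_ratebd}, the three terms are handled separately.
\begin{itemize}
\item The second term is at most $\mu D_\star$, since the exponential factor is at most $1$.
\item With $\mu = \frac{32\Lhat_F}{\sqrt T}\log_2^{3/2}(\sqrt T/48)$, the first two terms are therefore $O\big(\Lhat_F D_\star(\ln\sqrt T)^{3/2}/\sqrt T\big)$, using $\log_2(\sqrt T/48)\le \log_2\sqrt T = O(\ln\sqrt T)$.
\item For the third term, I bound $S$. We have $S=\lfloor\log_2(\Lhat_F/\mu)\rfloor \le \log_2\frac{\sqrt T}{32\log_2^{3/2}(\sqrt T/48)}\le \log_2\sqrt T$. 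The last step needs $\log_2(\sqrt T/48)\ge 1$, which holds because $T\ge 14175$ gives $\sqrt T\ge 119>96$. Hence $S^{5/2}=O((\ln\sqrt T)^{5/2})$.
\end{itemize}
This yields \eqref{eq: unbdd_ratebd}. The admissibility conditions of \Cref{th: unbdd_monot}, namely $T\ge 48S\kappa$ and $S\ge1$, are already packaged in \Cref{lem:numer}(iii) for this choice of $\mu$ and $T\ge 14175$, so I only cite them.

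For the complexity bound, it suffices to make each term of \eqref{eq: unbdd_ratebd} at most $\varepsilon/c$ for an absolute constant $c$. Using $\Lhat_F^2\le 2(L_F^2+B^2)$ and $(BD_\star+G)^2\le 2(B^2D_\star^2+G^2)$, both requirements reduce to a single inequality of the form
\[
\frac{a\,(\log_2 T)^{5}}{T}\le \varepsilon^2,\qquad a = O\big((L_F^2+B^2)D_\star^2+G^2\big),
\]
where I have upper bounded $(\ln\sqrt T)^{3}$ by $(\log_2 T)^5$, valid since $\ln\sqrt T\ge1$ for such $T$. Then \Cref{lem:numer}(ii) with $b=1$, $c=5$ and $\varepsilon'=\varepsilon^2$ gives a sufficient $T=\Theta\big(\frac{a}{\varepsilon^2}\ln^5(\frac{a}{\varepsilon^2}+1)\big)$. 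Finally, $\ln(a/\varepsilon^2+1)=O\big(\ln(\frac{D_\star(L_F+B)+G}{\varepsilon}+1)\big)$, because $\sqrt a = O(D_\star(L_F+B)+G)$ and $\ln(x^2+1)\le 2\ln(x+1)$. Taking the maximum with the floor $T\ge 14175$ produces the additive $1$ inside the $O(\cdot)$.

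The main obstacle is bookkeeping rather than a conceptual step. I need to check that the $\log_2(\sqrt T/48)$ inside $\mu$ does not make $S$ or $\kappa$ behave badly for moderate $T$; this is exactly why the lower bound $14175$ is imposed, and \Cref{lem:numer}(iii) handles it. I also need to make sure the form of \Cref{lem:numer}(ii) accommodates the $\log^5$ power with the base conversions between $\ln$ and $\log_2$, which only cost absolute constants.
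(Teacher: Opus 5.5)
Your proposal is correct and follows essentially the paper's route: you substitute the prescribed $\mu$ into \Cref{th: unbdd_monot}, bound $S$ by $O(\ln\sqrt{T})$ with admissibility taken from \Cref{lem:numer}(iii), and invert the rate using \Cref{lem:numer}. The only difference is cosmetic. The paper applies \Cref{lem:numer}(i) separately to the $\Lhat_F D_\star$ term (with $c=3/2$) and to the $\Ghat$ term (with $c=5/2$), whereas you square both terms and merge them into a single application of \Cref{lem:numer}(ii) with $c=5$, which yields the same final $\ln^5$ complexity.
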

\begin{proof}
We next write down the precise bound for the gradient mapping norm, unpacking the bound in \Cref{th: unbdd_monot}.
\begin{align}\label{eq: vmk4}
\mathbb{E} \| \cG_{\eta, H}(\bz_S) \| \leq \mu D_\star + 5\left(\frac{1}{2}\right)^{\frac{T}{48\kappa S}}\mu D_\star + \frac{114S^{5/2}\Ghat}{\sqrt{T}} + 18\left( \left(\frac{1}{2}\right)^{\frac{T}{48\kappa}} \mu D_\star + \frac{46S^{3/2}\Ghat}{\sqrt{T}} \right),
\end{align}
where $\Ghat$ is as defined in \eqref{eq: ghat_def}, that is, $\widehat{G}^2=B^2 \| \zinit-\bz_0^\star\|^2  +G^2$. Plugging in the definition of $\mu$ gives \eqref{eq: unbdd_ratebd}.

We first estimate the first term on the right-hand side. In particular, we have
\begin{align*}
\mu D_\star = \frac{32\Lhat_FD_\star}{\sqrt{T}}\left( \log_2\frac{\sqrt{T}}{48} \right)^{3/2}.
\end{align*}
Then, by using \Cref{lem:numer}(i) with $a=32\Lhat_F D_\star$, $b=\frac{1}{48}$, and $c=\frac{3}{2}$, we have that $\mu D_\star \leq\varepsilon$ whenever 
\begin{equation}\label{eq: kjh4}
T = \Theta\left(\frac{\Lhat_F^2 D_\star^2}{\varepsilon^2}\ln^3\left( \frac{\Lhat_FD_\star}{\varepsilon}+1 \right)\right).
\end{equation}
As long as we have $\mu D_\star \leq \varepsilon$, we also have for the second-term on the right-hand side of \eqref{eq: vmk4}
\begin{align*}
5\left(\frac{1}{2}\right)^{\frac{T}{48\kappa S}}\mu\| \zinit-\bz_0^\star\| \leq 2.5\varepsilon,
\end{align*}
because $T\geq 48\kappa S$, see Lemma \ref{lem:numer}(iii).

Similarly, the fourth term on the right-hand side of \eqref{eq: vmk4} is upper bounded by these terms up to absolute constants, as a result, it will be also of the order $\varepsilon$ scaled by an absolute constant.

As shown in Lemma \ref{lem:numer}(iii), we have
\begin{align*}
S^{5/2} \leq \left(\log_2\frac{\sqrt{T}}{48}\right)^{5/2}.
\end{align*}
Then, we estimate the third term on the right-hand side of \eqref{eq: vmk4} as
\begin{align}\label{eq: kjh5}
\frac{114S^{5/2}\Ghat}{\sqrt{T}} \leq \varepsilon, \text{~when~} T = \Theta\left( \frac{B^2D_\star^2+G^2}{\varepsilon^2}\ln^5\left(\frac{BD_\star+G}{\varepsilon}+1\right) \right),
\end{align}
where the calculation of $T$ uses \Cref{lem:numer}(i) with $a=114\Ghat$, $b=\frac{1}{48}$, and $c=5/2$. We also used $\Ghat^2 \leq B^2D_\star^2+G^2$ and $\Ghat \leq BD_\star+G$, due to Lemma \ref{eq: nonexp1}. The last term on the right-hand side of \eqref{eq: vmk4} is dominated up to constants by the third term so the given $T$ is sufficient (up to constants) to make the last term on the right-hand side of \eqref{eq: vmk4} less than $\varepsilon$.

Let us recall that $T\geq 48\kappa S$ was proved in Lemma \ref{lem:numer}(iii). Combining the estimates for $T$ in \eqref{eq: kjh4} and \eqref{eq: kjh5} gives the assertion.
\end{proof}

\section{Auxiliary Lemmas}
In this section, we will prove auxiliary results that were used in the main proofs. The first result connects the solutions of subproblems for different regularization levels.

Given $F^0=F^\mu$, let us recall that at iteration $s\geq 1$, we estimate $\bz^\star_{s-1}$ such that $0\in (F^{s-1}+\partial r)(\bz^\star_{s-1})$ where
\begin{align}\label{eq: reg_defs}
F^{s-1}(\bz)= F^\mu(\bz)+ \sum_{i=1}^{s-1} \mu_i(\bz-\bz_i), \text{~for~} s=1,2,\dots, S
\end{align}
with $\bz^\star_0$ is as defined in \eqref{eq: delta_prob}.
\begin{lemma}\label{eq: nonexp1}
For $s=1,\dots, S$, if we are given that $F^0=F^\mu$ (see \eqref{eq: fdelta}), $\bz_s^\star = (F^{s}+\partial r)^{-1}(0)$ and $\bz_{s-1}^\star = (F^{s-1}+\partial r)^{-1}(0)$, where
\begin{equation}\label{eq: sce4}
F^s(\bz) = F^{s-1}(\bz) + \mu_s(\bz-\bz_s),
\end{equation}
it follows that
\begin{equation*}
\| \bz_s-\bz_{s}^\star\| \leq \| \bz_s-\bz_{s-1}^\star\| \text{~and~} \| \bz_s^\star - \bz_{s-1}^\star \| \leq \| \bz_s - \bz_{s-1}^\star\|.
\end{equation*}
Moreover, we have that $\|\zinit - \bz^\star_0\|\leq \| \zinit - \bx^\star\|$.
\end{lemma}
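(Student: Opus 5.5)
The plan is to work directly from the optimality conditions of the two regularized problems and apply monotonicity. By definition, $-F^{s-1}(\bz^\star_{s-1}) \in \partial r(\bz^\star_{s-1})$ and $-F^{s}(\bz^\star_s) \in \partial r(\bz^\star_s)$. By \eqref{eq: sce4},
\[
F^s(\bz^\star_s) = F^{s-1}(\bz^\star_s) + \mu_s(\bz^\star_s - \bz_s).
\]
Monotonicity of $\partial r$ then gives
\[
\langle F^{s-1}(\bz^\star_{s-1}) - F^{s-1}(\bz^\star_s) - \mu_s(\bz^\star_s-\bz_s),\ \bz^\star_s - \bz^\star_{s-1}\rangle \geq 0 .
\]
Since $F^{s-1}$ is monotone (it is $F^\mu$ plus positive multiples of the identity), the first difference contributes a nonpositive term. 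This yields the key inequality
\[
\langle \bz^\star_s - \bz_s,\ \bz^\star_s-\bz^\star_{s-1}\rangle \leq 0 .
\]

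Both claimed inequalities follow from this obtuse-angle relation. Write $\bz_s - \bz^\star_{s-1} = (\bz_s - \bz^\star_s) + (\bz^\star_s - \bz^\star_{s-1})$ and expand the square. The cross term equals $-2\langle \bz^\star_s-\bz_s, \bz^\star_s-\bz^\star_{s-1}\rangle \ge 0$. Hence
\[
\|\bz_s-\bz^\star_{s-1}\|^2 \ge \|\bz_s-\bz^\star_s\|^2 + \|\bz^\star_s-\bz^\star_{s-1}\|^2,
\]
which gives both bounds simultaneously. Equivalently, $\bz^\star_s$ is the resolvent $(\mathrm{Id} + \mu_s^{-1}(F^{s-1}+\partial r))^{-1}(\bz_s)$ and $\bz^\star_{s-1}$ is a fixed point of that resolvent, so firm nonexpansiveness gives the same Pythagorean-type bound. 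I would mention this interpretation but write the direct computation.

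For the final claim $\|\zinit-\bz^\star_0\|\le\|\zinit-\bx^\star\|$, consider the two cases of \eqref{eq: fdelta}. In the strongly monotone case $F^\mu=F$, so $\bz^\star_0=\bx^\star$ by uniqueness and the claim is trivial. In the monotone case, the same argument applies with $F^{s-1}$ replaced by $F$, $\bz_s$ by $\zinit$, $\mu_s$ by $\mu$, and $\bz^\star_{s-1}$ by $\bx^\star$. That is, $-F(\bx^\star)\in\partial r(\bx^\star)$ and $-F(\bz^\star_0)-\mu(\bz^\star_0-\zinit)\in\partial r(\bz^\star_0)$. Monotonicity of $\partial r$ and $F$ gives $\langle \bz^\star_0-\zinit, \bz^\star_0-\bx^\star\rangle\le 0$, and the same expansion yields $\|\zinit-\bx^\star\|^2\ge\|\zinit-\bz^\star_0\|^2+\|\bz^\star_0-\bx^\star\|^2$.

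There is no real obstacle. The only point needing care is the base index $s=1$, where $F^0=F^\mu$ must be monotone; this holds in both cases of \eqref{eq: fdelta}. Existence of $\bz^\star_s$ is also needed; it holds because $F^s+\partial r$ is strongly monotone and maximally monotone (as $F^s$ is continuous and monotone), so its zero exists and is unique.
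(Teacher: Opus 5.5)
Your proof is correct and is essentially the paper's argument. The paper notes $\mu_s(\bz_{s-1}^\star-\bz_s)\in(F^s+\partial r)(\bz_{s-1}^\star)$ and applies $\mu_s$-strong monotonicity of $F^s+\partial r$, which is your inequality $\langle \bz_s^\star-\bz_s,\bz_s^\star-\bz_{s-1}^\star\rangle\le 0$ bookkept differently, followed by the same Pythagorean-type expansion; the final claim $\|\zinit-\bz_0^\star\|\le\|\zinit-\bx^\star\|$ is handled the same way, as you propose.
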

\begin{proof}
By definition of the solutions, we have that
\begin{equation*}
0\in(F^s+\partial r)(\bz^\star_s) \text{~and~} 0\in (F^{s-1}+\partial r)(\bz_{s-1}^\star) \text{~where~} F^s(\bz_{s-1}^\star)  = F^{s-1}(\bz_{s-1}^\star) + \mu_s (\bz_{s-1}^\star - \bz_{s}),
\end{equation*}
because of \eqref{eq: sce4}. Consequently, we have that $\mu_s (\bz_{s-1}^\star - \bz_s)\in(F^s+\partial r)(\bz_{s-1}^\star)$.

Let us now use that $(F^s+\partial r)$ is $\mu_s$-strongly monotone to write
\begin{align*}
&\mu_s\langle \bz_s - \bz_{s-1}^\star, \bz_s^\star-  \bz_{s-1}^\star \rangle \geq \mu_s \| \bz_s^\star-\bz_{s-1}^\star\|^2 \\
\iff& \frac{1}{2} \left( \|\bz_s-\bz_{s-1}^\star\|^2 + \|\bz_s^\star - \bz_{s-1}^\star \|^2 - \| \bz_s-\bz_s^\star\|^2 \right) \geq \| \bz_s^\star - \bz_{s-1}^\star\|^2.
\end{align*}
Rearranging the final inequality gives both results.

In view of \eqref{eq: fdelta}, if $F$ is $\mu$-strongly monotone, the last claim holds with equality. Otherwise, we use $\mu$-strong monotonicity of $F^\mu+\partial r$ as in the first part of the proof, along with
\begin{align*}
0\in (F^\mu + \partial r)(\bz_0^\star), ~~~0\in (F + \partial r)(\bx^\star), ~~~ F^\mu(\bx^\star)= F(\bx^\star) + \mu(\bx^\star-\zinit).
\end{align*}
This establishes the last inequality in the statement.
\end{proof}
This lemma generalizes \citep[Lemma A.1]{chen2024near} which was specialized to unconstrained min-max problems. The extension is to handle the multi-valued mapping $\partial r$ in the definition of subproblems. For this, we keep all the terms on the right-hand side to depend on the quality of approximate subproblem solutions.

\begin{lemma}\label{eq: grad_map_sym_bd}
Under Assumptions \ref{asp: 1}, \ref{asp: 2} and the setup of \eqref{eq: delta_prob}--\eqref{eq: fsdef}, set $\eta =\frac{1}{3\widehat{L}_F}$ where $\Lhat_F = L_F+B$. Then, we have
\begin{equation}\label{eq: kop4}
\|\cG_{\eta, H^\mu}(\bz_S) \| \leq 2 \sum_{j=1}^{S} \mu_j \| \bz_j - \bz_{j-1}^\star\| + 9\Lhat_F \| \bz_S - \bz_{S-1}^\star\|,
\end{equation}
where $\mu_s = 2^s\mu$ and $\bz_s, \bz_{s}^\star$ are as given in \eqref{eq: reg_defs} (see also \eqref{eq: fsdef} and \eqref{eq: delta_prob}).
\end{lemma}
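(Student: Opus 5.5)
The plan is to compare the gradient mapping of $H^\mu$ at $\bz_S$ with that of the last regularized operator $H^{S-1}=F^{S-1}+\partial r$, which vanishes at $\bz_{S-1}^\star$. Write $\cG_{\eta,H^\mu}(\bz_S)=\frac1\eta(\bz_S-\prox_{\eta r}(\bz_S-\eta F^\mu(\bz_S)))$. By \eqref{eq: reg_defs}, $F^\mu(\bz)=F^{S-1}(\bz)-\sum_{i=1}^{S-1}\mu_i(\bz-\bz_i)$. Since $0\in (F^{S-1}+\partial r)(\bz_{S-1}^\star)$, we have the fixed point identity $\bz_{S-1}^\star=\prox_{\eta r}(\bz_{S-1}^\star-\eta F^{S-1}(\bz_{S-1}^\star))$, so $\cG_{\eta,H^{S-1}}(\bz_{S-1}^\star)=0$.

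First I subtract this zero and use nonexpansiveness of $\prox_{\eta r}$:
\begin{align*}
\eta\|\cG_{\eta,H^\mu}(\bz_S)\| &\leq \|\bz_S-\bz_{S-1}^\star\| + \|\bz_S-\bz_{S-1}^\star - \eta(F^\mu(\bz_S)-F^{S-1}(\bz_{S-1}^\star))\| \\
&\leq (2+\eta L_F)\|\bz_S-\bz_{S-1}^\star\| + \eta\|F^{\mu}(\bz_{S-1}^\star)-F^{S-1}(\bz_{S-1}^\star)\|,
\end{align*}
where $F^\mu$ is $(L_F+\mu)$-Lipschitz. With $\eta=1/(3\Lhat_F)$, the first term contributes $3\Lhat_F(2+\eta(L_F+\mu))\|\bz_S-\bz_{S-1}^\star\|\leq 9\Lhat_F\|\bz_S-\bz_{S-1}^\star\|$, using $\mu\le\Lhat_F$. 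This is the origin of the constant $9$.

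The remaining term is $\|\sum_{i=1}^{S-1}\mu_i(\bz_{S-1}^\star-\bz_i)\|\le\sum_{i=1}^{S-1}\mu_i\|\bz_{S-1}^\star-\bz_i\|$. For each $i$, I telescope
\[
\|\bz_{S-1}^\star-\bz_i\|\le\|\bz_i-\bz_{i}^\star\|+\sum_{j=i+1}^{S-1}\|\bz_j^\star-\bz_{j-1}^\star\|,
\]
and apply \Cref{eq: nonexp1}. This bounds $\|\bz_i-\bz_i^\star\|$ by $\|\bz_i-\bz_{i-1}^\star\|$ and each $\|\bz_j^\star-\bz_{j-1}^\star\|$ by $\|\bz_j-\bz_{j-1}^\star\|$. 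The result is $\sum_{i=1}^{S-1}\mu_i\sum_{j=i}^{S-1}\|\bz_j-\bz_{j-1}^\star\|$.

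Next I swap the order of summation and use the geometric growth $\mu_i=2^i\mu$. This gives $\sum_{i\le j}\mu_i\le 2\mu_j$, hence the bound $2\sum_{j=1}^{S-1}\mu_j\|\bz_j-\bz_{j-1}^\star\|$. That is at most the first term of \eqref{eq: kop4}, since the sum there also includes $j=S$.

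The main obstacle is the bookkeeping in the telescoping step. We must avoid losing a factor of $S$, which is why the estimates of \Cref{eq: nonexp1} are kept in terms of $\|\bz_j-\bz_{j-1}^\star\|$ and why the doubling of $\mu_j$ is what yields the constant $2$. We also need care that the multivalued $\partial r$ is handled only through the prox fixed-point characterization, and that $\mu\le\Lhat_F$ so the Lipschitz constant of $F^\mu$ stays within $\Lhat_F$.
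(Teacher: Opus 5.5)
Your proof is correct. It uses the same ingredients as the paper: the prox fixed-point identity at $\bz_{S-1}^\star$, nonexpansiveness of $\prox_{\eta r}$, the telescoping via Lemma~\ref{eq: nonexp1}, and the geometric sums of $\mu_i$. The decomposition, however, is slightly different.

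The paper inserts the intermediate quantity $\cG_{\eta,H^{S-1}}(\bz_S)$. It first compares the two residuals at the same point, $\|\cG_{\eta,H^\mu}(\bz_S)-\cG_{\eta,H^{S-1}}(\bz_S)\|\le\|F^\mu(\bz_S)-F^{S-1}(\bz_S)\|\le\sum_{i=1}^{S-1}\mu_i\|\bz_S-\bz_i\|$. It then bounds $\|\cG_{\eta,H^{S-1}}(\bz_S)\|$ by subtracting $\cG_{\eta,H^{S-1}}(\bz_{S-1}^\star)=0$ and using the Lipschitz constant $L_{S-1}\le 2\Lhat_F$ of $F^{S-1}$.

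You instead subtract $\cG_{\eta,H^{S-1}}(\bz_{S-1}^\star)=0$ directly from $\cG_{\eta,H^\mu}(\bz_S)$, and you evaluate the regularization discrepancy $F^\mu-F^{S-1}$ at $\bz_{S-1}^\star$ rather than at $\bz_S$. This buys two small things:
\begin{enumerate}[(a)]
\item You only need the Lipschitz constant $L_F+\mu$ of $F^\mu$, hence only a bound like $\mu\le\Lhat_F$. The paper needs $(2^S-1)\mu\le\Lhat_F$, which ties $S$ to $\mu$.
\item Your telescoping chain ends at $\bz_{S-1}^\star$, so your sum stops at $j=S-1$. The paper's chain from $\bz_i$ to $\bz_S$ picks up an extra $\sum_{i\le S-1}\mu_i\|\bz_S-\bz_{S-1}^\star\|\le\mu_S\|\bz_S-\bz_{S-1}^\star\|$, which it absorbs into the $j=S$ term.
\end{enumerate}
In exchange, the paper's two-step route is more modular: an operator-perturbation bound at a fixed point, followed by Lipschitz continuity of the residual map. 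The same pattern is reused in Lemma~\ref{lem: reg_to_unreg}.

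Two cosmetic points:
\begin{enumerate}[(i)]
\item Your display writes $(2+\eta L_F)$. In the monotone case of \eqref{eq: fdelta} this should be $(2+\eta(L_F+\mu))$, which is what your text then uses.
\item The constant you actually obtain is $6\Lhat_F+L_F+\mu\le 8\Lhat_F$, and the paper's route also gives $8\Lhat_F$. So the $9$ in the statement is slack rather than the ``origin'' of your estimate.
\end{enumerate}
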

\begin{proof}
We first recall the definition of the residual for operators $H^\mu=F^\mu+\partial r$ and $H^s = F^s+\partial r$, respectively:
\begin{align*}
\cG_{\eta, H^{\mu}}(\bz) = \eta^{-1}\left( \bz-\prox_{\eta r}(\bz-\eta F^\mu(\bz)) \right) \text{~~~and~~~}
\cG_{\eta, H^{S-1}}(\bz) = \eta^{-1}\left( \bz-\prox_{\eta r}(\bz-\eta F^{S-1}(\bz)) \right).
\end{align*}
By these definitions, we have
\begin{align}
\|\cG_{\eta, H^\mu}(\bz_S) - \cG_{\eta, H^{S-1}}(\bz_S) \| &=  \eta^{-1}\| \prox_{\eta r}(\bz_S-\eta F^\mu(\bz_S)) - \prox_{\eta r}(\bz_S - \eta F^{S-1}(\bz_S)) \| \notag \\
&\leq  \| F^\mu(\bz_S) - F^{S-1}(\bz_S)\| \notag \\
&\leq  \sum_{i=1}^{S-1}\mu_i \|\bz_S - \bz_i\|,\label{eq: xso3}
\end{align}
where the first inequality used nonexpansiveness of the proximal operator, see for example \citep[Theorem 6.42]{beck2017first}. The last inequality is by \eqref{eq: reg_defs} and triangle inequality.

Now we estimate the term on the right-hand side by using triangle inequality and Lemma \ref{eq: nonexp1}
\begin{align*}
\| \bz_{S}-\bz_i\| &\leq \| \bz_S - \bz_{S-1}^\star\| + \sum_{j=i}^{S-2} \|\bz_{j+1}^\star - \bz_j^\star\| + \| \bz_i^\star - \bz_i\|\leq \sum_{j=i}^S \| \bz_j - \bz_{j-1}^\star\|.
\end{align*}
Substituting this and changing the order of summations give
\begin{align*}
\sum_{i=1}^{S-1} \mu_i \| \bz_S - \bz_i\| &\leq \sum_{i=1}^{S-1} \mu_i\sum_{j=i}^S \| \bz_j - \bz_{j-1}^\star\| \\
&= \sum_{j=1}^{S-1} \sum_{i=1}^j \mu_i \| \bz_j-\bz_{j-1}^\star\| + \sum_{i=1}^{S-1} \mu_i \| \bz_S-\bz_{S-1}^\star\|.
\end{align*}
By using the setting $\mu_i=2^i \mu$, we have $\sum_{i=1}^j \mu_i= \sum_{i=1}^j 2^i \mu \leq 2^{j+1}\mu = \mu_{j+1} = 2\mu_j$ and $\sum_{i=1}^{S-1}\mu_i \leq \mu_S$, we get
\begin{align*}
\sum_{i=1}^{S-1} \mu_i \| \bz_S - \bz_i\| &\leq 2\sum_{j=1}^S \mu_j \| \bz_j-\bz_{j-1}^\star\|.
\end{align*}
Using this in \eqref{eq: xso3} gives
\begin{align}
 \|\cG_{\eta, H^\mu}(\bz_S) \| \leq  \|\cG_{\eta, H^{S-1}}(\bz_S)\| +  2\sum_{j=0}^{S-1}\mu_{j+1} \| \bz_{j+1} - \bz_j^\star\|,\label{eq: grad_map_end}
\end{align}
where we used the triangle inequality on the left-hand side of \eqref{eq: xso3}.

Next, we estimate $\|\cG_{\eta, H^{S-1}}(\bz_S)\|$. First, by the definition of $\bz_{S-1}^{\star}$ (see \eqref{eq: reg_defs}), we have
\begin{align*}
\cG_{\eta, H^{S-1}}(\bz_{S-1}^\star) = 0 &\iff \bz_{S-1}^\star = \prox_{\eta r}(\bz_{S-1}^\star - \eta F^{S-1}(\bz_{S-1}^\star)) \\
&\iff \bz_{S-1}^\star + \eta \partial r(\bz_{S-1}^\star) \ni \bz_{S-1}^\star - \eta F^{S-1}(\bz_{S-1}^\star) \\
&\iff 0\in (\partial r+F^{S-1})(\bz_{S-1}^\star).
\end{align*}
Using this, we estimate as
\begin{align*}
\|\cG_{\eta, H^{S-1}}(\bz_S)\| &= \|\cG_{\eta, H^{S-1}}(\bz_S) - \cG_{\eta, H^{S-1}}(\bz_{S-1}^\star)\| \\
&\leq \eta^{-1} \left( \| \bz_S - \bz_{S-1}^\star\| +\|\prox_{\eta r}(\bz_S - \eta F^{S-1}(\bz_S))-\prox_{\eta r}(\bz_{S-1}^\star - \eta F^{S-1}(\bz_{S-1}^\star))\|  \right) \\
&\leq \eta^{-1} \left( 2\| \bz_S - \bz_{S-1}^\star\| +\eta \| F^{S-1}(\bz_{S})- F^{S-1}(\bz_{S-1}^\star)\|  \right) \\
&\leq \eta^{-1} \left( 2\| \bz_S - \bz_{S-1}^\star\| +\eta L_{S-1} \| \bz_{S} -\bz_{S-1}^\star\|  \right),
\end{align*}
where the first step is by triangle inequality, the second by triangle inequality and nonexpansiveness of the proximal operator, the third by the $L_{S-1}$-Lipschitzness of $F^{S-1}$. Since $L_{S-1}\leq 2\Lhat_F$, we insert this into \eqref{eq: grad_map_end} to obtain the bound.
\end{proof}
This lemma is extending \citep[Lemma 3.1]{chen2024near} to using gradient mapping instead of the gradient and the existence of the regularizer $r$ in the subproblems. In contrast to \citep[Lemma 5.1]{allen2018make}, we cannot rely on inequalities specific to minimization such as the descent lemma.
\begin{lemma}\label{lem: reg_to_unreg}
Let $F$ be monotone and $F^\mu, (\bz_s^\star), (\bz_s)$ defined as \eqref{eq: fdelta}, \eqref{eq: fsdef}, and Algorithm \ref{alg:rec_reg}, respectively. Set $\eta =\frac{1}{3\widehat{L}_F}$ where $\Lhat_F = L_F+B$. Then, we have
\begin{align}
\| \cG_{\eta, H}(\bz_S)\|  \leq 2.5 \sum_{j=1}^{S} \mu_j \| \bz_j - \bz_{j-1}^\star\| + 9\Lhat_F \| \bz_S - \bz_{S-1}^\star\| + \mu \|\zinit-\bx^\star\|.\label{eq: nbv4}
\end{align}
\end{lemma}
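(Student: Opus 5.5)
We compare the gradient mapping of the original operator $H=F+\partial r$ with that of the perturbed operator $H^\mu=F^\mu+\partial r$, and then invoke \Cref{eq: grad_map_sym_bd}. In the strongly monotone case of \eqref{eq: fdelta} we have $F^\mu=F$, so $\cG_{\eta,H}=\cG_{\eta,H^\mu}$. The claim then follows directly from \eqref{eq: kop4}, since $2\le 2.5$ and the extra term is nonnegative. The interesting case is the monotone one, where $F^\mu(\bz)=F(\bz)+\mu(\bz-\zinit)$.

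\textbf{Step 1: comparing the two gradient mappings.} By the triangle inequality and nonexpansiveness of $\prox_{\eta r}$, exactly as in \eqref{eq: xso3},
\begin{equation*}
\|\cG_{\eta,H}(\bz_S)-\cG_{\eta,H^\mu}(\bz_S)\| \le \|F(\bz_S)-F^\mu(\bz_S)\| = \mu\|\bz_S-\zinit\|.
\end{equation*}

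\textbf{Step 2: bounding $\mu\|\bz_S-\zinit\|$.} We split along the chain of subproblem solutions:
\begin{equation*}
\|\bz_S-\zinit\| \le \|\zinit-\bz_0^\star\| + \sum_{j=1}^{S-1}\|\bz_j^\star-\bz_{j-1}^\star\| + \|\bz_S-\bz_{S-1}^\star\|.
\end{equation*}
By \Cref{eq: nonexp1}, each middle term satisfies $\|\bz_j^\star-\bz_{j-1}^\star\|\le \|\bz_j-\bz_{j-1}^\star\|$, and the first term satisfies $\|\zinit-\bz_0^\star\|\le\|\zinit-\bx^\star\|$. Hence
\begin{equation*}
\mu\|\bz_S-\zinit\|\le \mu\|\zinit-\bx^\star\| + \sum_{j=1}^{S}\mu\|\bz_j-\bz_{j-1}^\star\|.
\end{equation*}
Since $\mu\le \mu_j/2$ for $j\ge1$ (as $\mu_j=2^j\mu$), the sum is at most $0.5\sum_{j=1}^S\mu_j\|\bz_j-\bz_{j-1}^\star\|$.

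\textbf{Step 3: combining.} Adding this estimate to the bound \eqref{eq: kop4} for $\|\cG_{\eta,H^\mu}(\bz_S)\|$ from \Cref{eq: grad_map_sym_bd} gives coefficient $2+0.5=2.5$ on the sum, $9\Lhat_F$ on the last-iterate term, and the additional term $\mu\|\zinit-\bx^\star\|$. This is exactly \eqref{eq: nbv4}.

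The only delicate point is Step 2: we must avoid a bare $\|\bz_S-\zinit\|$ term, since it need not be small. Routing it through $\bz_0^\star$ and using $\|\zinit-\bz_0^\star\|\le D_\star$, rather than any bound on the iterates, is what makes the estimate close.
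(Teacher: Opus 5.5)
Your proof is correct and is essentially the paper's argument: nonexpansiveness of $\prox_{\eta r}$ bounds the gap between $\cG_{\eta,H}(\bz_S)$ and $\cG_{\eta,H^\mu}(\bz_S)$ by $\mu\|\bz_S-\zinit\|$, which you telescope through $\bz_0^\star,\dots,\bz_{S-1}^\star$ using the consecutive-solution lemma and $\|\zinit-\bz_0^\star\|\le\|\zinit-\bx^\star\|$, absorb via $\mu\le\mu_j/2$, and add to the bound for $\|\cG_{\eta,H^\mu}(\bz_S)\|$. Your separate treatment of the branch $F^\mu=F$ is a harmless extra that the paper's proof skips.
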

\begin{proof}
Since $F$ is only monotone, we are in the second case of \eqref{eq: fdelta}. First, we characterize the difference between the residual for solving the original problem \eqref{eq: mi} and the perturbed problem \eqref{eq: delta_prob}.

By the definition of the residual, we have
\begin{align*}
\cG_{\eta, H}(\bz) = \eta^{-1}\left( \bz-\prox_{\eta r}(\bz-\eta F(\bz)) \right) \text{~~~and~~~} \cG_{\eta, H^{\mu}}(\bz) = \eta^{-1}\left( \bz-\prox_{\eta r}(\bz-\eta F^{\mu}(\bz)) \right).
\end{align*}
With this, nonexpansiveness of the proximal operator and the triangle inequality, we obtain
\begin{align}
\|\cG_{\eta, H}(\bz_S) - \cG_{\eta, H^\mu}(\bz_S)\| \leq \|F(\bz_S) - F^{\mu}(\bz_S)\| \leq \mu\|\bz_S -\zinit\|\leq \mu(\|\bz_S -\bz_0^\star\| + \| \bz_0^\star - \zinit\|),\label{eq: xid4}
\end{align}
where $\bz_0^\star$ is the solution of the perturbed problem as in \eqref{eq: delta_prob} and the second inequality uses the definition in the second case of \eqref{eq: fdelta}. For the second term on the right-hand side of \eqref{eq: xid4}, we can use \Cref{eq: nonexp1} to get
\begin{equation}\label{eq:init_dist_sol}
\| \zinit - \bz^\star_0 \| \leq \|\zinit - \bx^\star\|.
\end{equation}
For the first term on the right-hand side of \eqref{eq: xid4}, we estimate similar to the proof of the \Cref{eq: grad_map_sym_bd}:
\begin{align*}
\| \bz_S - \bz_0^\star\| &\leq \| \bz_S - \bz_{S-1}^\star \| + \sum_{j=1}^{S-1} \| \bz_{j}^\star - \bz_{j-1}^\star\|
\leq \| \bz_S - \bz_{S-1}^\star \| + \sum_{j=1}^{S-1} \| \bz_{j} - \bz_{j-1}^\star\| 
= \sum_{j=1}^{S} \| \bz_{j} - \bz_{j-1}^\star\|,
\end{align*}
where the second step used \Cref{eq: nonexp1}.

On \eqref{eq: xid4}, we substitute the last two estimates to the right-hand side and use triangle inequality on the left-hand side to deduce
\begin{align}
\| \cG_{\eta, H}(\bz_S)\| &\leq \| \cG_{\eta, H^\mu}(\bz_S)\| + \mu \sum_{j=1}^S\|\bz_j-\bz_{j-1}^\star\| + \mu \|\zinit-\bx^\star\|\notag \\
&\leq 2.5 \sum_{j=1}^{S} \mu_j \| \bz_j - \bz_{j-1}^\star\| + 9\Lhat_F \| \bz_S - \bz_{S-1}^\star\| + \mu \|\zinit-\bx^\star\|,\label{eq: nbv4}
\end{align}
where the last step is because of $\mu_j=2^j\mu\Rightarrow \mu\leq \mu_j/2$ and \Cref{eq: grad_map_sym_bd}.
\end{proof}

\begin{lemma}\label{th: it_dif_to_gradmapmu}
For problem \eqref{eq: mi}, let Assumptions \ref{asp: 1} and \ref{asp: 2} hold. We apply \Cref{alg:rec_reg} to solve \eqref{eq: delta_prob} with budget $T \geq 48S\kappa=48S\Lhat_F/\mu$ for the number of oracles where $S = \left\lfloor \log_2\frac{\Lhat_F}{\mu} \right\rfloor \geq 1$ and initial point $\zinit$. Set $\eta =\frac{1}{3\widehat{L}_F}$ where $\Lhat_F = L_F+B$. If we have an estimate of the form
\begin{align*}
\mu_s^2 \mathbb{E} \| \bz_{s} - \bz_{s-1}^\star\|^2 \leq \left( \frac{1}{2} \right)^{\frac{Ts}{24\kappa S}} \mu^2 \|\zinit-\bz^\star_0\|^2 + \frac{CS\Ghat^2}{T},
\end{align*}
for some $C, \Ghat$, then it follows that
\begin{align*}
\mathbb{E} \| \cG_{\eta, H^\mu}(\bz_S)\| \leq \left( 4\left(\frac{1}{2}\right)^{\frac{T}{48\kappa S}}+18\left(\frac{1}{2}\right)^{\frac{T}{48\kappa}} \right)\mu\|\zinit - \bz_0^\star\| + \frac{\sqrt{C}\Ghat}{\sqrt{T}}\left( 2S^{3/2}+18\sqrt{S} \right).
\end{align*}
\end{lemma}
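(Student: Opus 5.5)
Start from the deterministic bound of \Cref{eq: grad_map_sym_bd},
\[
\|\cG_{\eta, H^\mu}(\bz_S)\| \leq 2\sum_{j=1}^S \mu_j\|\bz_j-\bz_{j-1}^\star\| + 9\Lhat_F\|\bz_S-\bz_{S-1}^\star\|.
\]
Take expectations and apply Jensen's inequality, $\E\|X\|\leq\sqrt{\E\|X\|^2}$, to each summand. Then use the hypothesis in the form of square roots, via $\sqrt{a+b}\leq\sqrt a+\sqrt b$:
\[
\mu_j\E\|\bz_j-\bz_{j-1}^\star\| \leq \left(\tfrac12\right)^{\frac{Tj}{48\kappa S}}\mu\|\zinit-\bz_0^\star\| + \frac{\sqrt{CS}\,\Ghat}{\sqrt T}.
\]

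\textbf{First sum.} Sum the geometric factors $\sum_{j=1}^S q^j$ with $q=(1/2)^{T/(48\kappa S)}$. Because $T\geq 48\kappa S$, we have $q\leq 1/2$, so $\sum_{j\ge1} q^j\leq q/(1-q)\leq 2q$. Multiplied by the prefactor $2$, this gives the $4(1/2)^{T/(48\kappa S)}\mu\|\zinit-\bz_0^\star\|$ term. The noise parts contribute $2S\sqrt{CS}\,\Ghat/\sqrt T = 2S^{3/2}\sqrt C\,\Ghat/\sqrt T$.

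\textbf{Last term.} The coefficient is $9\Lhat_F$, not $\mu_S$, so I convert it. By the definition of $S=\lfloor\log_2\kappa\rfloor$, we have $\mu_S=2^S\mu\geq \kappa\mu/2=\Lhat_F/2$, hence $\Lhat_F\leq 2\mu_S$. Therefore
\[
9\Lhat_F\E\|\bz_S-\bz_{S-1}^\star\|\leq 18\mu_S\E\|\bz_S-\bz_{S-1}^\star\|\leq 18\left(\tfrac12\right)^{\frac{T}{48\kappa}}\mu\|\zinit-\bz_0^\star\| + 18\frac{\sqrt{CS}\,\Ghat}{\sqrt T}.
\]
Here the exponent uses $s=S$, so $Ts/(48\kappa S)=T/(48\kappa)$. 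This produces the $18(1/2)^{T/(48\kappa)}$ and $18\sqrt S$ terms.

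Adding the two pieces gives exactly the claimed bound. The only delicate points are the conversion $\Lhat_F\leq 2\mu_S$ from the floor in $S$, and the geometric-sum constant $q/(1-q)\leq 2q$, which relies on $T\geq48\kappa S$. Neither should pose real difficulty.
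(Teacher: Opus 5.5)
Your proposal is correct and follows the paper's proof essentially step for step. The steps are: Jensen plus $\sqrt{a+b}\le\sqrt a+\sqrt b$ on the hypothesis, the geometric sum with ratio $q\le 1/2$ (from $T\ge 48\kappa S$), and the conversion $9\Lhat_F\le 18\mu_S$ via $2^{S+1}\ge \Lhat_F/\mu$, all plugged into Lemma~\ref{eq: grad_map_sym_bd}, which yields exactly the constants $4$, $18$, $2S^{3/2}$ and $18\sqrt{S}$ in the statement.
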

\begin{proof}
Let us recall that the hypothesis of the lemma implies for all $s=1,\dots, S$ that
\begin{equation}\label{eq: sll4}
\mu_s \mathbb{E} \| \bz_{s} - \bz_{s-1}^\star\| \leq \left( \frac{1}{2} \right)^{\frac{Ts}{48\kappa S}} \mu \|\zinit-\bz^\star_0\| + \frac{\sqrt{CS}\Ghat}{\sqrt{T}},
\end{equation}
by Jensen's inequality and concavity of the square root. Summing up both sides of the inequality gives
\begin{align}\label{eq: sll5}
\sum_{s=1}^S \mu_s \mathbb{E} \| \bz_{s} - \bz_{s-1}^\star\| \leq   \mu \|\zinit-\bz^\star_0\|\sum_{s=1}^S \left( \frac{1}{2} \right)^{\frac{Ts}{48\kappa S}}+ \frac{\sqrt{C}S^{3/2}\Ghat}{\sqrt{T}} \leq 2\left(\frac{1}{2}\right)^{\frac{T}{48\kappa S}}\mu\| \zinit-\bz_0^\star\| + \frac{\sqrt{C}S^{3/2}\Ghat}{\sqrt{T}},
\end{align}
where the last estimate is summing the geometric series: since $T\geq 48\kappa S$, we have $\sum_{j=1}^s\left(\frac{1}{2} \right)^{Tj/(48\kappa S)}\leq 2\left(\frac{1}{2}\right)^{\frac{T}{48\kappa S}}$. This is because $\sum_{j=1}^s\alpha^j = \frac{\alpha(1-\alpha^S)}{1-\alpha} \leq \frac{\alpha}{1-\alpha}$ where $\alpha =\left(\frac{1}{2} \right)^{T/(48\kappa S)}\leq1/2$.

In view of the second term on the right-hand side of the result of \Cref{eq: grad_map_sym_bd}, we have
\begin{equation}\label{eq: bdd_lastterm}
9\Lhat_F\E\|\bz_S-\bz_{S-1}^\star\| \leq 18\mu_S \E\|\bz_S-\bz_{S-1}^\star\| \leq 18\left( \left(\frac{1}{2}\right)^{\frac{T}{48\kappa}} \mu\| \zinit - \bz_0^\star\| + \frac{\sqrt{SC}\Ghat}{\sqrt{T}} \right),
\end{equation}
where the first inequality is because of $2^{S+1} \geq \frac{\Lhat_F}{\mu}$, which is by the definition of $S$ and $\mu_S = 2^S \mu$. The second inequality is substituting \eqref{eq: sll4} with $s=S$.

Using \eqref{eq: sll5} and \eqref{eq: bdd_lastterm} to estimate the right-hand side of \Cref{eq: grad_map_sym_bd} gives the claimed bound.
\end{proof}

\begin{fact}\label{fact:struc}
Let $F$ satisfy \Cref{asp: 2} with constants $B, G$. Then, $F^s$ defined in \eqref{eq: fsdef}, satisfies the BG assumption with the same constants. Moreover, $F^{s-1}$ is $(L_F+(2^s-1) \mu)$-Lipschitz, $((2^s-1)\mu)$-strongly monotone.
\end{fact}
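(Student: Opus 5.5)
We need to prove the Fact: $F^s$ satisfies BG with the same constants $B,G$, and $F^{s-1}$ is Lipschitz and strongly monotone with the stated constants.

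The plan is to use the explicit form \eqref{eq: fsdef}, namely $F^{s}(\bz) = F^\mu(\bz) + \sum_{i=1}^{s}\mu_i(\bz-\bz_i)$. Here $F^\mu$ is either $F$ or $F+\mu(\cdot-\zinit)$, according to \eqref{eq: fdelta}. The points $\bz_i$ are outputs of earlier stages, so they are treated as fixed, that is, measurable with respect to the past. The natural stochastic oracle is $F^s_\xi(\bz) := F_\xi(\bz) + \mu(\bz-\zinit)\mathbf{1}_{\text{monotone case}} + \sum_{i=1}^s\mu_i(\bz-\bz_i)$. We only randomize the $F$ part, and the added affine terms are computed exactly.

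\textbf{BG assumption.} Conditionally on the past, $F^s_\xi(\bz)-F^s(\bz) = F_\xi(\bz)-F(\bz)$, because the deterministic affine parts cancel. Assumption \ref{asp: 2} then gives $\E\|F^s_\xi(\bz)-F^s(\bz)\|^2\le B^2\|\bz-\zinit\|^2+G^2$, with the same center $\zinit$. This step depends on keeping $\zinit$ as the global center, which is why the assumption was stated that way. Unbiasedness is equally immediate.

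\textbf{Strong monotonicity and Lipschitzness.} Each map $\bz\mapsto\mu_i(\bz-\bz_i)$ is $\mu_i$-strongly monotone and $\mu_i$-Lipschitz, since it is the identity scaled by $\mu_i$ plus a constant. In the monotone case, $F^\mu$ is $\mu$-strongly monotone and $(L_F+\mu)$-Lipschitz. Summing gives that $F^{s-1}$ is $\big(\mu+\sum_{i=1}^{s-1}2^i\mu\big)=(2^s-1)\mu$-strongly monotone and $(L_F+(2^s-1)\mu)$-Lipschitz. Here we use $\mu_i=2^i\mu$ and the geometric sum $\sum_{i=0}^{s-1}2^i=2^s-1$.

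In the strongly monotone case, $F^\mu=F$ is $\mu_F$-strongly monotone and $L_F$-Lipschitz with $\mu=\mu_F$. Strong monotonicity is the same as before. For Lipschitzness we get $L_F+(2^s-2)\mu\le L_F+(2^s-1)\mu$, so the claim holds as an upper bound.

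The only subtle point, and it is mild, is the randomness of the centers $\bz_i$. The statements must be read conditionally on the $\sigma$-algebra generated up to the start of stage $s$. Under this conditioning $F^s$ is a fixed operator, and the inner analysis (Lemma \ref{lem: eg}) is applied with conditional expectations. The tower property then transfers the bounds to unconditional ones.
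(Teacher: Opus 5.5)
Your proof is correct and follows the paper's argument: the deterministic affine terms $\sum_i \mu_i(\bz-\bz_i)$ (and $\mu(\bz-\zinit)$) cancel in $F^s_\xi - F^s$, and the Lipschitz and strong monotonicity constants come from adding $\mu+\sum_{i=1}^{s-1}2^i\mu=(2^s-1)\mu$ to those of $F$. You also treat the strongly monotone case explicitly, where the Lipschitz constant $L_F+(2^s-2)\mu$ is only bounded by the stated one, and you note the need to condition on the random centers $\bz_i$; the paper leaves both of these points implicit.
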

\begin{proof}
Since $F^s(\bz) = F^\mu(\bz) + \sum_{i=1}^s \mu_i(\bz-\bz_i)$ and $F^s_\xi(\bz) = F^\mu_\xi(\bz) + \sum_{i=1}^s \mu_i(\bz-\bz_i)$, we have that
\begin{align*}
\mathbb{E}\| F^s_\xi(\bz) - F^s(\bz)\|^2 = \E\| F_\xi(\bz) - F(\bz)\|^2 \leq B^2\| \bz-\zinit\|^2 + G^2.
\end{align*}
This gives the first assertion.

For the second, by definition, we have $F^{s-1}(\bz)=F^\mu(\bz)+\sum_{i=1}^{s-1} \mu_i(\bz-\bz_i)$ which is $L_F+(2^s-1)\mu$-Lipschitz since $\sum_{i=1}^{s-1} \mu_i = \sum_{i=1}^{s-1} 2^i\mu = (2^s-2)\mu$. The strong monotonicity constant is calculated in the same way.
\end{proof}
For getting precise constants for the complexity results and the parameters (see for example \Cref{subsec:monot_bddvar}), we often solve inequalities of the form $e^x \geq Cx$. For simplicity, we use the following simple sufficient condition to satisfy these inequalities without resorting to the Lambert W function to find exact roots.
\begin{lemma}{\citep[Lemma A.2]{shalev2014understanding}}\label{lem: tr_lb}
For some $C> 1$, if $x\geq 4\ln2 + 2 \ln C$, then we have $x\geq \ln x+\ln C \iff e^x \geq Cx$.
\end{lemma}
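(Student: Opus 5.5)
The plan is to reduce the inequality $e^x\ge Cx$ to a statement about a single auxiliary function, namely
\[
\phi(x) := x - \ln x - \ln C,
\]
and to show that $\phi(x)\ge 0$ whenever $x\ge 4\ln 2+2\ln C$. The claimed equivalence $x\ge \ln x+\ln C \iff e^x\ge Cx$ comes from taking logarithms, which is legitimate because $x>0$ and the exponential is increasing. So the task is to prove $\phi(x)\ge0$ on the stated range.

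\textbf{Step 1.} I will use the fact that $\phi'(x)=1-1/x>0$ for $x>1$. Since $C>1$, the threshold satisfies $x_0:=4\ln2+2\ln C>4\ln 2>1$. Hence $\phi$ is increasing on $[x_0,\infty)$, and it suffices to check $\phi(x_0)\ge 0$.

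\textbf{Step 2.} Rather than evaluate at $x_0$ directly, I will use a bound on the logarithm. For every $t>0$, concavity of $\ln$ (tangent line at $t=2$) gives
\[
\ln t \le \frac{t}{2} + \ln 2 - 1.
\]
Applying this with $t=x$ gives
\[
\phi(x)\ge \frac{x}{2} - \ln 2 + 1 - \ln C.
\]
At $x\ge x_0$ the right-hand side is at least $2\ln2+\ln C-\ln2+1-\ln C = \ln 2+1>0$. This bound alone already covers all $x\ge x_0$, so the monotonicity from Step 1 is not even needed; I will keep it only as a sanity check.

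\textbf{Step 3.} Finally, I will convert $\phi(x)\ge0$ back to $e^x\ge Cx$ by exponentiating. There is no real obstacle in this argument; the only point needing care is choosing a linear upper bound on $\ln x$ with slope at most $1/2$, so that the factor $2$ in front of $\ln C$ in the threshold absorbs the $\ln C$ term.
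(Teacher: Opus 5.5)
Your proof is correct. The paper does not prove this statement; it imports it from Shalev-Shwartz and Ben-David (Lemma A.2), so the only comparison available is with the argument in that reference.

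There, the general claim $x\ge 4a\ln(2a)+2b \Rightarrow x\ge a\ln x+b$ is proved; here $a=1$ and $b=\ln C>0$. The proof splits $x=\tfrac{x}{2}+\tfrac{x}{2}$. The hypothesis gives $x\ge 2b$, hence $\tfrac{x}{2}\ge b$. It also gives $x\ge 4a\ln(2a)$, and a preceding lemma then yields $\tfrac{x}{2}\ge a\ln x$. That lemma shows $x-2a\ln x$ is increasing for $x>2a$ and nonnegative at $x=4a\ln(2a)$. This monotonicity-plus-evaluation step is where the constant $4\ln 2$ comes from.

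Your tangent-line inequality $\ln x\le \tfrac{x}{2}+\ln 2-1$ performs the same half-and-half split. The difference is that it controls the $\ln x$ half for every $x>0$ at once. So neither the derivative argument nor the evaluation at the threshold is needed; as you note, your Step~1 is redundant.

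Your route also shows the stated threshold is loose. You get a margin of $1+\ln 2$ at $x_0$, and in fact any $x>0$ with $x\ge 2\ln C+2\ln 2-2$ suffices. The paper only uses this lemma as a sufficient condition in Lemma~\ref{lem:numer}, so either argument serves; yours is shorter and self-contained.
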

We now continue to derive numerical sufficient conditions for our complexity results.
\begin{lemma}\label{lem:numer}
\begin{enumerate}[i.]
\item Let $a,b,c, \varepsilon$ be positive real numbers and $T\geq \frac{1}{b^2}$. Then, the inequality $\frac{a}{\sqrt{T}}\left(\log_2b\sqrt{T}\right)^c\leq \varepsilon$ holds if $T \geq \frac{a^2c^{2c}}{\varepsilon^2(\ln 2)^{2c}}\left( 4\ln2+2 \ln \left(\frac{c (ab)^{1/c}}{\varepsilon^{1/c}\ln 2}+1\right) \right)^{2c}$.
\item Let $a,b,c,\varepsilon'$ be positive real numbers and $T\geq \frac{1}{b}$. Then, the inequality $\frac{a}{T}\left(\log_2bT\right)^c\leq \varepsilon'$ holds if $T \geq \frac{ac^{c}}{\varepsilon'(\ln 2)^{c}}\left( 4\ln2+2 \ln\left( \frac{c (ab)^{1/c}}{(\varepsilon')^{1/c}\ln 2}+1\right) \right)^{c}$.
\item If $T\geq 14175$ and $\mu=\frac{32\Lhat_F}{\sqrt{T}}\log_2^{3/2}\frac{\sqrt{T}}{48}$, then the following inequalities hold: $T \geq 48\kappa S$, $\mu^2 \geq \frac{1024S^3\Lhat_F^2}{T}$, where $\kappa =\frac{\Lhat_F}{\mu}$ and $S=\lfloor \log_2\frac{\Lhat_F}{\mu} \rfloor$. Moreover, we have $S \leq  \log_2\frac{\sqrt{T}}{48}$, $\kappa \geq 2$, and $S\geq 1$.
\end{enumerate}
\end{lemma}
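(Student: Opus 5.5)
Parts (i) and (ii) both reduce to \Cref{lem: tr_lb} by a change of variables, and part (iii) is a direct numerical check.

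\textbf{Part (i).} Set $x = \frac{\ln(b\sqrt{T})}{c}$, so that $\sqrt{T} = e^{cx}/b$ and $\log_2(b\sqrt{T}) = \frac{cx}{\ln 2}$.
\begin{itemize}
\item The target inequality becomes $\frac{ab}{e^{cx}}\left(\frac{cx}{\ln 2}\right)^c \leq \varepsilon$.
\item Taking $c$-th roots gives $e^x \geq C x$ with $C = \frac{c(ab)^{1/c}}{\varepsilon^{1/c}\ln 2}$.
\item \Cref{lem: tr_lb} applies with $C$ replaced by $C+1>1$ (since $e^x\ge (C+1)x$ implies $e^x\ge Cx$). It suffices that $x \geq 4\ln 2 + 2\ln(C+1)$.
\item The assumption $T\ge 1/b^2$ ensures $x\ge 0$, so the substitution is legitimate.
\end{itemize}
The stated threshold on $T$ need not be matched with equality. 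One checks that it implies $\sqrt{T} \geq \frac{a c^c}{\varepsilon (\ln 2)^c}\, y^c$ with $y = 4\ln2+2\ln(C+1)$. Then:
\begin{itemize}
\item If $x \ge y$, we are done by \Cref{lem: tr_lb}.
\item If $x < y$, then directly $\frac{a}{\sqrt{T}}\left(\frac{cx}{\ln 2}\right)^c \le \frac{a}{\sqrt{T}}\left(\frac{cy}{\ln 2}\right)^c \le \varepsilon$ by the threshold.
\end{itemize}
This two-case argument (monotonicity in $x$ below the threshold, \Cref{lem: tr_lb} above it) is the main point. It avoids having to invert the threshold exactly, which is the only delicate step.

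\textbf{Part (ii).} This is identical to part (i), with $\sqrt{T}$ replaced by $T$ and $\varepsilon$ by $\varepsilon'$. Put $x=\ln(bT)/c$ and run the same two cases.

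\textbf{Part (iii).} Let $\ell = \log_2\frac{\sqrt{T}}{48}$. Since $T \ge 14175$, we have $\sqrt{T} \ge 119$ and $\ell \ge \log_2(2.48) > 1$.
\begin{itemize}
\item \emph{Bound on $\kappa$ and $S$.} We have $\kappa = \frac{\sqrt{T}}{32\ell^{3/2}}$. Because $\ell^{3/2}\ge 1$ and $\sqrt T/32 \le \sqrt T/48\cdot 1.5$, it follows that $\kappa \le \sqrt{T}/32$. This gives $S\le \log_2 \kappa$. To get the stated $S\le \ell$, I would use $\kappa \le \frac{\sqrt{T}}{48}\cdot \frac{1.5}{\ell^{3/2}}\le \frac{\sqrt{T}}{48}$ when $\ell^{3/2}\ge 1.5$, i.e.\ $\ell\ge 1.31$, which holds since $\ell\ge\log_2 2.48\approx1.31$.
\item \emph{$\kappa\ge 2$ and $S\ge 1$.} The function $\sqrt{T}/\ell^{3/2}$ is increasing for $T$ in this range. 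Evaluating at $T=14175$ gives $\kappa\approx 119/(32\cdot 1.42)\approx 2.6 \ge 2$, hence $S \ge 1$. This evaluation at the endpoint is the place where the constant $14175$ is tuned, so it must be checked carefully.
\item \emph{$T\ge 48\kappa S$.} We have $48\kappa S \le 48\frac{\sqrt{T}}{32\ell^{3/2}}\,\ell = \frac{1.5\sqrt{T}}{\sqrt{\ell}} \le T$, since $\sqrt{T}\ge 1.5$.
\item \emph{Lower bound on $\mu^2$.} We have $\mu^2 = \frac{1024\Lhat_F^2\ell^3}{T} \ge \frac{1024 S^3 \Lhat_F^2}{T}$, using $S\le\ell$.
\end{itemize}
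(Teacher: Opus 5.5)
Your parts (i) and (ii) are correct. The paper substitutes through the right-hand side, $\frac{cx}{\ln 2}=(\varepsilon\sqrt{T}/a)^{1/c}$. The stated threshold on $T$ is then literally $x\ge 4\ln 2+2\ln(C+1)$, and the target is again $e^x\ge Cx$, so one application of \Cref{lem: tr_lb} with $C+1$ finishes with no case split. Your choice $x=\ln(b\sqrt{T})/c$ gives the same inequality $e^x\ge Cx$ but decouples it from the threshold, which is why you need two cases. Both routes are valid.

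There is, however, a genuine (if local) gap in (iii): your proof of $\kappa\ge 2$, and hence of $S\ge 1$. The claim that $\sqrt{T}/\ell^{3/2}$ is increasing for $T\ge 14175$ is false.
\begin{itemize}
\item Writing $\sqrt{T}=48\cdot 2^{\ell}$ gives $\kappa=\frac{3}{2}\,2^{\ell}\ell^{-3/2}$. Its logarithmic derivative in $\ell$ is $\ln 2-\frac{3}{2\ell}$, which is negative for $\ell<\frac{3}{2\ln 2}\approx 2.16$, i.e.\ for $14175\le T\lesssim 48^2e^3\approx 4.6\cdot 10^4$.
\item On that range $\kappa$ decreases from about $2.48$ at $T=14175$ to its minimum $\frac{3}{2}\bigl(\frac{2e\ln 2}{3}\bigr)^{3/2}\approx 2.11$. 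Your value $\ell^{3/2}\approx 1.42$ is also wrong; by construction $\ell^{3/2}\approx 1.50$ at $T=14175$.
\item So evaluating at the endpoint says nothing about the rest of the range.
\end{itemize}
The conclusion survives, but it needs a global bound, which is what the paper does: $\kappa\ge 2\iff (2^{2/3})^{\ell}/\ell\ge (4/3)^{2/3}\approx 1.211$, and $\min_{\ell>0}(2^{2/3})^{\ell}/\ell=\frac{2e\ln 2}{3}\approx 1.256$. This holds for every $T>48^2$, so the constant $14175$ is not tuned for $\kappa\ge 2$ at all.

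The constant is tuned for $\ell^{3/2}\ge\frac{3}{2}$, which gives $\kappa\le\sqrt{T}/48$ and hence $S\le\ell$. There your numerics are too coarse. You need $\ell\ge(3/2)^{2/3}\approx 1.31037$, but $\log_2(119/48)\approx 1.3099$ and even $\log_2 2.48\approx 1.31034$ fall just short (indeed $1.31^{3/2}\approx 1.4994<1.5$). You must use the unrounded $\sqrt{14175}/48\approx 2.48039$, which gives $\ell\approx 1.31057$. Once $S\le\ell$ and $\ell\ge 1$ are in hand, your remaining bullets are exactly the paper's: $48\kappa S\le\frac{3\sqrt{T}}{2\sqrt{\ell}}\le T$ and $\mu^2T=1024\widehat{L}_F^2\ell^3\ge 1024S^3\widehat{L}_F^2$.
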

\begin{proof}
\begin{enumerate}
\item By the required conditions on $a, b, c, T$, we have that the required inequality is equivalent to
\begin{equation}
\log_2 b\sqrt{T} \leq \left(\frac{\varepsilon \sqrt{T}}{a}\right)^{1/c}.\label{eq: scid4}
\end{equation}
Let us define $x$ such that
\begin{equation}\label{eq: sov4}
\left(\frac{\varepsilon \sqrt{T}}{a}\right)^{1/c} = \frac{cx}{\ln 2} \iff T^{\frac{1}{2c}} = \frac{cxa^{1/c}}{\varepsilon^{1/c}\ln 2}.
\end{equation}
We also note that
\begin{equation*}
\log_2 b\sqrt{T} = \frac{c}{\ln 2} \ln (b^{\frac{1}{c}}T^{\frac{1}{2c}}).
\end{equation*}
Plugging in the last two estimates into \eqref{eq: scid4} gives 
\begin{equation*}
\ln (b^{1/c}T^{1/(2c)}) \leq x \iff b^{1/c} T^{1/(2c)} \leq e^x \iff \frac{c(ab)^{1/c}}{\varepsilon^{1/c}\ln2}x \leq e^x \Leftarrow \left(\frac{c(ab)^{1/c}}{\varepsilon^{1/c}\ln2}+1\right)x \leq e^x,
\end{equation*}
since $x \geq 0$.

From \Cref{lem: tr_lb}, we know that this is satisfied as long as
\begin{equation*}
x \geq 4\ln 2 + 2 \ln \left(\frac{c(ab)^{1/c}}{\varepsilon^{1/c}\ln 2}+1\right),
\end{equation*}
which is translated to a lower bound on $T$ given in the assertion by using the last identity in \eqref{eq: sov4}

\item We use the proof of the first assertion by replacing $T \leftarrow T^2$ and $\varepsilon\leftarrow \varepsilon'$.
\item 
We first show that $S\geq 1$ for which it is enough to show $\kappa \geq 2$. Let $x = \log_2\frac{\sqrt{T}}{48} \iff \sqrt{T}=48\times 2^x$ and hence $\kappa = \frac{\sqrt{T}}{32\log_2^{3/2}\frac{\sqrt{T}}{48}} = \frac{\sqrt{T}}{32x^{3/2}}$. The inequality $\kappa \geq 2$ is equivalent to
\begin{align*}
2^x \geq \frac{4}{3} x^{3/2} \iff \frac{(2^{2/3})^x}{x} \geq \left(\frac{4}{3}\right)^{2/3}.
\end{align*}
Denoting $\beta = 2^{2/3}>1$, we have by a standard calculation that $\min_{x> 0} \beta^x/x=e\ln \beta > 1.25 > 1.22 > \left(\frac{4}{3}\right)^{2/3}$.

We then prove the assertion $\mu^2 T \geq 1024S^3\Lhat_F^2$. By a straightforward calculation, we have
\begin{align*}
&\mu^2 T = 1024\Lhat_F^2\log_2^3\frac{\sqrt{T}}{48}  \geq 1024S^3\Lhat_F^2= 1024\Lhat_F^2\left\lfloor \log_2 \frac{\Lhat_F}{\mu}\right\rfloor^{3} \Leftarrow \log_2 \frac{\sqrt{T}}{48} \geq \log_2\frac{\sqrt{T}}{32\log_2^{3/2}\frac{\sqrt{T}}{48}} \\
&\iff \log_2^{3/2}\frac{\sqrt{T}}{48} \geq \frac{3}{2} \Leftarrow T \geq 14175.
\end{align*}
Recalling $x = \log_2\frac{\sqrt{T}}{48}$, we have just established that $x\geq S$. Then, since $\kappa = \frac{\sqrt{T}}{32\log_2^{3/2}\frac{\sqrt{T}}{48}} = \frac{\sqrt{T}}{32x^{3/2}}$, we have
\begin{align*}
48\kappa S \leq 48\kappa x \leq \frac{3\sqrt{T}}{2\sqrt{x}} \leq \frac{3\sqrt{T}}{2} \leq T,
\end{align*}
since $x \geq 1$ and $\sqrt{T} \geq 3/2$.
\end{enumerate}
\end{proof}
\section{Conclusions}
Our goal in this work was to obtain a complexity result for the gradient mapping that is optimal up to logarithmic terms for solving constrained convex-concave min-max problems or stochastic VIs. 

Our construction relied on the idea of recursive regularization of \cite{allen2018make} to get the best complexity, resulting in a three-loop construction. A clear direction for future research is to obtain the same complexity with a simpler approach, ideally with a single-loop algorithm. We believe that one may be able to go for a two-loop construction by combining \cite{kotsalis2022simple} and \cite{allen2018make}, although it is not clear to us if this construction would still be able to handle the BG assumption. Obtaining the same complexity with a single-loop algorithm is open to our knowledge, even for minimization problems.

The second direction is analyzing the monotone case directly, rather than using perturbation by a small parameter $\mu$. This is necessary even for minimization problems to obtain the best gradient-norm guarantee to our knowledge, so we believe it is a question with a wider scope than min-max problems. Third, we believe that extension of our results to general monotone inclusions is possible by changing the inner solver. Finally, by using similar ideas to \cite{chen2024near}, we suspect it would be possible to allow some form of nonmonotonicity, to solve problems with cohypomonotone operators and constraints.

\bibliographystyle{alpha}
\bibliography{opt_stocminmax}
\end{document}